\documentclass[12pt]{amsart}
\usepackage[margin=1in]{geometry}

\usepackage{url,amsmath,amssymb,mathrsfs}
\usepackage{enumitem}
\usepackage{commath}
\usepackage{physics}
\usepackage{nicefrac}
\usepackage{amsthm}
\usepackage{xfrac}
\usepackage[all]{xy}
\usepackage{graphicx}
\graphicspath{ {./images/} }
\usepackage{color}
\usepackage{csquotes}

\newtheorem{theorem}{Theorem}[section]
\newtheorem{lemma}[theorem]{Lemma}
\newtheorem{proposition}[theorem]{Proposition}
\newtheorem{corollary}[theorem]{Corollary}

\newtheorem{conjecture}[theorem]{Conjecture}

\newtheorem{innercustomgeneric}{\customgenericname}
\providecommand{\customgenericname}{}
\newcommand{\newcustomtheorem}[2]{%
	\newenvironment{#1}[1]
	{%
		\renewcommand\customgenericname{#2}%
		\renewcommand\theinnercustomgeneric{##1}%
		\innercustomgeneric
	}
	{\endinnercustomgeneric}
}
\newcustomtheorem{customthm}{Theorem}
\newcustomtheorem{customlemma}{Lemma}

\theoremstyle{definition}
\newtheorem{example}[theorem]{Example}
\newtheorem{definition}[theorem]{Definition}

\newtheorem{remark}[theorem]{Remark}

\newcommand{\bZ}{\mathbb Z}
\newcommand{\bQ}{\mathbb Q}
\newcommand{\bN}{\mathbb N}

\newcommand{\Rbar}{\overline{R}}

\newcommand{\Cl}{\textup{Cl}}

\newcommand{\aut}{\textup{Aut}}

\newcommand{\modulo}[1]{\textup{ (mod }#1)}

\newcommand{\gen}[1]{\langle#1\rangle}

\newcommand{\ow}{\textup{otherwise}}

\newcommand{\cO}{\mathcal{O}}
\newcommand{\Irr}{\textup{Irr}}
\newcommand{\term}[1]{\textbf{\textup{#1}}}

\newcommand{\quot}[2]{\large\sfrac{#1}{#2}\normalsize}

\title[Elasticity of Orders from the $S$-Relative Davenport Constant]{Elasticity of Orders from the $S$-relative Davenport Constant: an Arithmetic Application of a Number-Theoretic Investigation}
\author{Jared Kettinger \and Grant Moles}

\begin{document}

\begin{abstract}
    Orders in algebraic number fields have long been objects of central interest in algebraic number theory. Despite non-maximal orders failing to be Dedekind, the present authors have previously shown that the structure of the ideal class group may still contain enough information to determine elasticity. In this paper, we develop the $S$-relative Davenport constant, which builds on previous work by M. Ska{\l}ba. Although Ska{\l}ba's original construction was defined to aid in the study of binary quadratic forms, we show that this related invariant is the exact tool needed to tackle the question of elasticity in non-integrally closed orders. In particular, we investigate the elasticity of orders $\cO$ whose conductor ideal $I=(\cO:\cO_K)$ is prime as an ideal of $\cO$, as well as orders in quadratic number fields with primary conductor. We also give conditions under which $\cO$ will have the same elasticity as the full ring of integers $\cO_K$.
\end{abstract}

\maketitle

\section{Introduction}


Let $G$ be a finite abelian group. A $G$\term{-sequence} $\{g_1,g_2,\dots,g_n\}$ of (not necessarily distinct) elements of $G$ is called a \term{zero-sum sequence} if $g_1 + g_2 +\dotsm +g_n = 0$. We use $D(G)$ to denote the smallest positive integer $m$ such that any $G$-sequence of length $m$ must have a zero-sum subsequence. Equivalently, we may define $D(G)$ as the largest positive integer $m$ such that there exists a zero-sum sequence of length $m$ with no proper zero-sum subsequence. This group invariant is known as the \term{Davenport constant} of $G$.

The Davenport constant was first introduced by Rogers (\cite{rogers1963combinatorial}) and later popularized by Davenport (\cite{davenport1966midwestern}) in connection with the arithmetic of rings of integers. In particular, when $G$ is taken to be the class group of a ring of integers $\mathcal{O}_K$, $D(G)$ is a sharp upper bound for the length of the prime factorization (counting multiplicity) of the ideal $(\alpha)$ for an irreducible element $\alpha \in \text{Irr}(\mathcal{O}_K)$. More generally, the Davenport constant is the precise tool we need to take advantage of uniqueness of ideal factorization---allowing us to discern arithmetic information about an element $\alpha$ from the ideal $(\alpha)$. Consider the following definition. 

\begin{definition}
        \label{elasticity}
        Let $R$ be an atomic domain and $\alpha\in R$ a nonzero nonunit. The \term{length set} of $\alpha$ in $R$ is
        $$\ell_R(\alpha):=\{k\in\bN\mid\alpha=\pi_1\cdots\pi_k\textup{ for some }\pi_1,\dots,\pi_k\in\Irr(R)\}.$$
        The \term{elasticity} of $\alpha$ in $R$ is
        $$\rho_R(\alpha):=\sup\left\{\frac{m}{n}\:\middle|\:m,n\in\ell_R(\alpha)\right\}.$$
        The \term{elasticity} of $R$ is
        $$\rho(R):=\sup\{\rho_R(\alpha)|\alpha\in R \text{ a nonzero nonunit}\}.$$
    \end{definition}


    Recall that an integral domain in which every nonzero nonunit factors uniquely into primes is called a \term{unique factorization domain} (UFD). Note that the elasticity of a domain is a measure of the failure of unique factorization---in particular, factorization length. The following is a classic result of Narkiewicz (\cite{narkiewicz}) which demonstrates how the Davenport constant of the class group $\Cl(\mathcal{O}_K)$ of a ring of integers $\mathcal{O}_K$ determines this factorization invariant. 

    \begin{theorem}
        Let $\cO_K$ be a ring of integers which is not a UFD, then
        \[
        \rho(\cO_K) = \frac{D(\Cl(\cO_K))}{2}.
        \]
    \end{theorem}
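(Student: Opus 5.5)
We prove the two inequalities $\rho(\cO_K)\le D(G)/2$ and $\rho(\cO_K)\ge D(G)/2$ separately, where $G=\Cl(\cO_K)$. Throughout we use that $\cO_K$ is Dedekind, so every nonzero ideal factors uniquely into prime ideals. We also use the standard fact that every ideal class contains infinitely many prime ideals; this follows from Chebotarev applied to the Hilbert class field. Since $\cO_K$ is not a UFD, $|G|\ge 2$, hence $D(G)\ge 2$.

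\textbf{Upper bound.} For an irreducible $\pi$, write $(\pi)=P_1\cdots P_k$ with prime ideals $P_i$. Then $([P_1],\dots,[P_k])$ is a zero-sum sequence in $G$. If it had a proper nonempty zero-sum subsequence, the product of the corresponding primes would be a principal ideal $(\beta)$ with $\beta$ a nonunit proper divisor of $\pi$, contradicting irreducibility. So the sequence is minimal zero-sum and $k\le D(G)$.

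Conversely, suppose $k\ge 2$. Then no $P_i$ is principal, since a principal $P_i$ would itself be a proper zero-sum subsequence. Hence each $P_i$ has nontrivial class, and $k\ge 2$ holds whenever $\pi$ is not prime. Prime elements are exactly the irreducibles with $k=1$.

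Now take $\alpha=\pi_1\cdots\pi_m=\tau_1\cdots\tau_n$ and let $L$ be the total number of prime ideal factors of $(\alpha)$, counted with multiplicity. Each principal prime factor $(p)$ of $(\alpha)$ must appear as a prime element (up to a unit) among both the $\pi$'s and the $\tau$'s. Cancel these, leaving $m'$ and $n'$ factors with $L'$ nonprincipal prime ideals. Then $2m'\le L'\le D(G)\,n'$. Restoring the $t$ cancelled primes gives
\[
\frac{m}{n}=\frac{m'+t}{n'+t}\le \max\Bigl(1,\frac{m'}{n'}\Bigr)\le \frac{D(G)}{2},
\]
using $D(G)\ge 2$.

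\textbf{Lower bound.} Choose a minimal zero-sum sequence $g_1,\dots,g_D$ of length $D=D(G)$. For each $i$, choose prime ideals $P_i\in g_i$ and $Q_i\in -g_i$, all distinct. The product $P_1\cdots P_D=(\pi)$ is principal. By minimality $\pi$ is irreducible, because any principal subproduct yields a zero-sum subsequence. Likewise $Q_1\cdots Q_D=(\pi')$ with $\pi'$ irreducible, since $(-g_i)$ is also minimal zero-sum. Each $P_iQ_i=(\beta_i)$ is principal, and $\beta_i$ is irreducible: if $g_i\ne 0$, neither $P_i$ nor $Q_i$ is principal, and if $D\ge 2$, minimality forces every $g_i\neq 0$. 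Therefore
\[
\pi\pi'=u\,\beta_1\cdots\beta_D
\]
for some unit $u$, giving factorizations of lengths $2$ and $D$. Hence $\rho(\cO_K)\ge D/2$.

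The main obstacle is the existence of enough distinct primes in each class, which requires invoking the infinitude of primes in every ideal class. The remaining steps are routine bookkeeping, including the careful handling of prime elements in the upper bound.
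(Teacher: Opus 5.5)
Your proof is correct. The paper does not prove this theorem itself (it cites Narkiewicz), but your argument is exactly the one the paper runs in Lemma~\ref{elasticity relatively prime to I}, specialized to $R=\cO_K$: a non-prime irreducible has between $2$ and $D(G)$ prime ideal factors, and the construction $\pi\pi'=u\beta_1\cdots\beta_D$ comes from a minimal zero-sum sequence and its negative. The only differences are cosmetic: you cancel prime elements directly via the mediant inequality instead of invoking Lemma~\ref{remove other primes}, and your requirement of distinct primes is unnecessary, since one prime ideal per class already suffices for the lower-bound construction.
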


    For a non-maximal order $\mathcal{O}$ of a number field $K$, the relationship between $D(\Cl(\mathcal{O}))$ and $\rho(\mathcal{O})$ is more tenuous as $\mathcal{O}$ fails to be Dedekind, since $\overline{\mathcal{O}}=\mathcal{O}_K$. As we will see, this results in the generalized class group $\Cl(\mathcal{O})$ containing only information about those ideals relatively prime to the conductor ideal $(\mathcal{O}:\mathcal{O}_K)$. However, recent work by the present authors in \cite{KettingerMoles2025elasticity} has shown that, in the case when $(\mathcal{O}:\mathcal{O}_K)$ is a prime ideal of $\mathcal{O}_K$, the elasticity of the order is still completely determined by the structure of $\Cl(\mathcal{O})$. In this paper, we consider the more general case when $(\mathcal{O}:\mathcal{O}_K)$ is a prime ideal of $\mathcal{O}$. In order to do so, we will introduce a variant of the Davenport constant which not only informs our present inquiry, but also provides a natural language for the results of \cite{KettingerMoles2025elasticity}.

    The Davenport constant has long been studies as a purely combinatorial or number-theoretic object (see \cite{olson1} or \cite{bovey1975conditions}, respectively). It has also inspired numerous variants and generalizations in the area of factorization theory. Interestingly, multiple generalizations have been defined and studied from a strictly number-theoretic point of view which were later found to have profound arithmetic implications. For example, Adhikari and Rath introduced weighted Davenport constants in 2006 (\cite{adhikari2006davenport}), after which they were widely studied. It was not until 2014 that Halter-Koch gave an arithmetic interpretation of these Davenport constants in \cite{Halter-Koch_Arithmetical}. Furthermore, in 2022, Boukheche et al. demonstrated in \cite{boukheche2022monoids} that these invariants completely determine the arithmetic of the normset of a ring of integers $\mathcal{O}_K$ for a Galois number field $K$.  

    Similarly, in 1993, Ska{\l}ba introduced the relative Davenport constant as part of his investigation of numbers with a unique representation by a binary quadratic form (\cite{skalba1993numbers}). It turns out, when we generalize this invariant from elements to sets, we find precisely the tool we need to study the arithmetic of non-maximal orders. 
    
    In Section 2, we will focus on developing this group invariant and proving results which will inform our arithmetic inquiries. In Section 3, we will study orders whose conductor ideal is prime in the order, culminating in a characterization of their elasticities in Theorem \ref{elasticity of principal order}. In Section 4, we briefly apply these results to settle prior conjectures on orders whose elasticity is equal to that of its integral closure. Finally, in Section 5, we restrict our attention to orders in quadratic number fields, allowing for a more complete characterization of elasticity in Theorem \ref{elasticity of Rn}. We conclude with Theorem \ref{final}, which settles a particular case of a well-known conjecture on the elasticity of intermediate orders.

    To remain consistent with previous work on this topic and to emphasize the relationship between a (non-maximal) order and the full ring of integers, we will adopt the notation $R$ to refer to an order in a number field $K$ and $\Rbar$ to refer to its integral closure $\cO_K$.
    

\section{The $S$-Relative Davenport Constant}

A natural generalization of the Davenport constant was first introduced by Ska{\l}ba in \cite{skalba1993numbers} to answer a question on binary quadratic forms. He further explored this invariant in its own right in \cite{skalba1998relative}. Before defining it, we introduce some relevant terms. For a finite abelian group $G$, a $G$-sequence $\{g_1,g_2,\dots,g_n\}$ is called \term{irreducible} if it has no proper zero-sum subsequence. Furthermore, if $\{g_1,g_2,\dots,g_n\}$ also fails to be a zero-sum sequence, we call it \term{primitive}. The \term{relative Davenport constant of $G$ with respect to $a$}, denoted $D_a(G)$, is the largest positive integer $m$ such that there exists an irreducible $G$-sequence $\{g_1,g_2,\dots,g_m\}$ with $g_1 + g_2 + \dotsm +g_m = a$. We will presently develop an analogue of the relative Davenport constant replacing the element $a$ with a subset. Taking advantage of the groundwork done in \cite{skalba1993numbers}, we are able to give explicit formulas for this constant for certain subsets of arbitrary cyclic groups which will translate to strong arithmetic results in later sections. 

\begin{definition}
    Let $G$ be a finite abelian group and $S$ a nonempty subset of $G$. We call the sequence of elements $\{g_1,\dots,g_n\}$ an \term{$S$-sum sequence} in $G$ if $g_1+\dots+g_n\in S$. If $S$ is the singleton set $S=\{g\}$ for some $g\in G$, we may use the term \term{$g$-sum sequence} rather than $\{g\}$-sum sequence for convenience. Note that this agrees with the existing definition of a zero-sum sequence.
\end{definition}

\begin{definition}
    Let $G$ be a finite abelian group and $S$ a nonempty subset of $G$. We define the \term{$S$-relative Davenport constant}, denoted $D_S(G)$, to be the minimal natural number $n$ such that any $S$-sum sequence $\{g_1,\dots,g_n\}$ of length $n$ in $G$ must contain a zero-sum subsequence.
\end{definition}

It should be observed that this definition differs from the generalization introduced in \cite{kettinger2026generalized} which uses similar notation. The following proposition demonstrates the relationship between the S-relative Davenport constant and the original invariant.

\begin{proposition}
    Let $G$ be a finite abelian group. Then $D_G(G)=D(G)$.
\end{proposition}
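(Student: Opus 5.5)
The plan is to unwind the definitions and observe that when $S=G$, the condition of being an $S$-sum sequence is vacuous. Every $G$-sequence $\{g_1,\dots,g_n\}$ has its sum $g_1+\dots+g_n$ lying in $G$, so the class of $G$-sum sequences of length $n$ is exactly the class of all $G$-sequences of length $n$. I will then compare $D_G(G)$, the minimal $n$ such that every $G$-sum sequence of length $n$ contains a zero-sum subsequence, with the first definition of $D(G)$ given in the introduction: the smallest positive integer $m$ such that every $G$-sequence of length $m$ has a zero-sum subsequence. Once the two classes of sequences are identified, the two defining properties of $n$ coincide word for word, so the two minima are equal.

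Concretely, I would argue the two inequalities separately to make this transparent.

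\emph{Upper bound.} Take $n=D(G)$. Any $G$-sum sequence of length $n$ is in particular a $G$-sequence of length $D(G)$, so it has a zero-sum subsequence. Hence $n$ satisfies the defining property of $D_G(G)$, and $D_G(G)\le D(G)$.

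\emph{Lower bound.} Let $n<D(G)$. By minimality of $D(G)$ there is a $G$-sequence of length $n$ with no zero-sum subsequence. Its sum lies in $G$, so it is a $G$-sum sequence of length $n$ without a zero-sum subsequence. Thus $n$ fails the defining property, and $D_G(G)\ge D(G)$.

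There is essentially no obstacle here. The only point needing care is to work with the ``smallest $m$'' formulation of $D(G)$ rather than the equivalent ``longest minimal zero-sum sequence'' formulation, since the former matches the definition of $D_S(G)$ directly. With that choice, no appeal to the equivalence of the two formulations is needed.
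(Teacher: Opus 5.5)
Your proof is correct and is exactly the definitional unwinding the paper relies on; the paper states this proposition without proof, because for $S=G$ the $S$-sum condition is vacuous. The defining property of $D_G(G)$ therefore coincides, length by length, with the ``smallest $m$'' definition of $D(G)$ from the introduction. You are implicitly reading ``natural number'' in the definition of $D_S(G)$ as ``positive integer''; this matches the paper's convention, which writes $\bN_0$ when $0$ is included, so both minima range over the same set.
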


Note that the $S$-relative Davenport constant and Ska{\l}ba's original construction generalize the two equivalent definitions of $D(G)$---in terms of sequences of minimal and maximal lengths respectively. With this in mind, we introduce a \enquote{small} version of our invariant. 

\begin{definition}\label{small}
    Let $G$ be a finite abelian group and $S$ a nonempty subset of $G$. We define the \term{small $S$-relative Davenport constant}, denoted $d_S(G)$, to be the largest whole number $n$ such that there exists an $S$-sum sequence in $G$ of length $n$ which has no zero-sum subsequence. If every $S$-sum sequence in $G$ has a zero-sum subsequence, we define $d_S(G)=0$.
\end{definition}

The terms small and large Davenport constant are used analogously throughout the literature. Namely, $d(G)$ denotes the largest possible integer $\ell$ such that there exists a $G$-sequence of length $\ell$ with no $0$-subsequence---proper or otherwise. For abelian groups, we have $d(G) + 1 = D(G)$, but this fails in general for non-abelian groups. For details, see \cite{geroldinger2013large}. The following results demonstrate that we avoid this bifurcation for the $S$-relative Davenport constant of abelian groups as well.


\begin{lemma}
    Let $G$ be a finite abelian group and $S$ a nonempty subset of $G$. Then $d_S(G)=0$ if and only if $S=\{0\}$.
\end{lemma}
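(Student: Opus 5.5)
The argument splits into the two directions of the equivalence. A key convention is that a sequence is always nonempty: an $S$-sum sequence has at least one term, since otherwise the empty sequence would be a zero-sum sequence whenever $0\in S$. I will make this explicit at the start. The definition of $d_S(G)$ only speaks of "the largest whole number $n$" for which a suitable sequence exists. So $d_S(G)=0$ means exactly that no $S$-sum sequence of positive length is free of zero-sum subsequences.

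For the direction ($\Leftarrow$), suppose $S=\{0\}$ and let $\{g_1,\dots,g_n\}$ be any $S$-sum sequence with $n\ge 1$. Then $g_1+\dots+g_n=0$, so the whole sequence is itself a zero-sum subsequence; recall that in the definition of $d_S(G)$ the subsequence need not be proper. Hence every $\{0\}$-sum sequence contains a zero-sum subsequence, and $d_{\{0\}}(G)=0$ by the convention in Definition \ref{small}.

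For the direction ($\Rightarrow$), I argue by contrapositive. Since $S$ is nonempty and $S\neq\{0\}$, there is some $s\in S$ with $s\neq 0$. The one-term sequence $\{s\}$ is an $S$-sum sequence of length $1$. Its only nonempty subsequence is $\{s\}$ itself, whose sum is $s\neq 0$, so it has no zero-sum subsequence. Therefore $d_S(G)\ge 1$, and in particular $d_S(G)\neq 0$.

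There is no real obstacle here; the only delicate point is the bookkeeping about empty sequences. I will state that zero-sum subsequences are required to be nonempty, consistent with the usual definitions of $D(G)$ and $d(G)$. I also note that the largest such $n$ is finite, for instance bounded by $d(G)$, so $d_S(G)$ is well defined. This finiteness is not needed for the equivalence itself.
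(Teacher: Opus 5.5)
Your proposal is correct and follows essentially the same argument as the paper: for $S=\{0\}$ every $S$-sum sequence is itself a zero-sum subsequence, and a nonzero $s\in S$ gives a length-one $S$-sum sequence with no zero-sum subsequence (the paper argues this direction directly, showing every $g\in S$ must be $0$, rather than by contrapositive). Your explicit convention that sequences and zero-sum subsequences are nonempty is a harmless clarification of what the paper uses implicitly.
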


\begin{proof}
    Recall that $d_S(G)$ is defined to be $0$ exactly when every $S$-sequence in $G$ has a zero-sum subsequence. Assume $d_S(G)=0$, and let $g\in S$. Then the $S$-sequence $(g)$ must contain a zero-sum subsequence, and thus $g=0$. Therefore, $S=\{0\}$.

    Now assume that $S=\{0\}$. Then any $S$-sequence in $G$ must by definition also be a zero-sum sequence and thus contains a zero-sum subsequence. Then $d_S(G)=0$.
\end{proof}

\begin{theorem}
    Let $G$ be a finite abelian group and $S$ a nonempty subset of $G$. Then $D_S(G)=d_S(G)+1$.
\end{theorem}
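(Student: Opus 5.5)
The plan is to compare the two constants directly from their definitions. The one nontrivial ingredient is a \emph{merging} argument: it shows that zero-sum-free $S$-sum sequences exist in every length from $1$ up to $d_S(G)$. This matters because $D_S(G)$ is defined as the \emph{minimal} $n$ with a certain property. Without such a monotonicity statement, some shorter length could satisfy that property by accident.

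First, $d_S(G)$ is finite. Any $G$-sequence of length at least $D(G)$ contains a zero-sum subsequence, so every zero-sum-free $S$-sum sequence has length less than $D(G)$. Next I would treat the degenerate case $S=\{0\}$. By the preceding Lemma, $d_S(G)=0$. The only $S$-sum sequence of length $1$ is $(0)$, which is itself zero-sum, so $D_S(G)=1=d_S(G)+1$. I will read lengths in the definition of $D_S(G)$ as natural numbers $n\ge 1$, so that the empty sequence plays no role.

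Now assume $S\neq\{0\}$ and put $m=d_S(G)\ge 1$. By maximality of $m$, every $S$-sum sequence of length $n\ge m+1$ contains a zero-sum subsequence. In particular length $m+1$ has the defining property of $D_S(G)$, so $D_S(G)\le m+1$.

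For the reverse inequality I need that no length $n\le m$ has the property. Start from a zero-sum-free $S$-sum sequence $\{g_1,\dots,g_m\}$ with sum $s\in S$. Replace the pair $g_{m-1},g_m$ by the single element $g_{m-1}+g_m$. The new sequence still sums to $s$, so it is an $S$-sum sequence of length $m-1$.

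The key check is that the merged sequence is still zero-sum free. Send each nonempty subsequence of the new sequence to a subsequence of the old one: keep the unmerged terms as they are, and if the merged term is included, use both $g_{m-1}$ and $g_m$ in its place. This gives a nonempty subsequence of the original with the same sum. Hence a zero-sum subsequence of the new sequence would produce one in the old sequence, which is impossible.

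Iterating the merge gives zero-sum-free $S$-sum sequences of every length $1\le n\le m$. So none of these lengths satisfies the defining property, and therefore $D_S(G)\ge m+1$. Combining the two inequalities yields $D_S(G)=d_S(G)+1$.

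The main obstacle is recognizing that this monotonicity in length is needed at all. Simply deleting a term would change the sum and could leave $S$, so deletion does not work. The merging step resolves this, since it preserves the total sum and preserves the absence of zero-sum subsequences.
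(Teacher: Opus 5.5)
Your proposal is correct and follows essentially the same route as the paper: you treat $S=\{0\}$ separately, get $D_S(G)\le d_S(G)+1$ from the maximality of $d_S(G)$, and get the lower bound from a length-reducing merge that preserves both the sum and zero-sum-freeness (the paper collapses the tail $g_k+\dots+g_d$ into a single term in one step, whereas you repeatedly merge the last two terms). Your explicit observation that $d_S(G)<D(G)$ is finite is a small point the paper leaves implicit.
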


\begin{proof}
    First, assume that $d_S(G)=0$; that is, assume $S=\{0\}$. Then since any $S$-sequence of length $1$ in $G$ must contain a zero-sum subsequence (namely, the entire sequence), $D_S(G)=1=d_S(G)+1$.

    Now assume that $d_S(G)=d>0$. Then there is some $S$-sequence $(g_1,\dots,g_d)$ in $G$ of length $d$ which has no zero-sum subsequence. For each $1\leq k\leq d$, let $h_k=g_k+g_{k+1}+\dots+g_d$, and consider the sequence $(g_1,\dots,g_{k-1},h_k)$. Since $g_1+\dots+g_{k-1}+h_k=g_1+\dots+g_d\in S$, this is still an $S$-sequence in $G$. Furthermore, any subsequence sum of $(g_1,\dots,g_{k-1},h_k)$ is clearly a subsequence sum of $(g_1,\dots,g_d)$. Then since the original sequence does not have a zero-sum subsequence, neither does this new sequence. Thus, for every $1\leq k\leq d$, there exists an $S$-sequence in $G$ of length $k$ with no zero-sum subsequence, so $D_S(G)>d$.

    Finally, let $n=d_S(G)+1$ and consider an $S$-sequence $(g_1,\dots,g_n)$ in $G$. If this sequence has no zero-sum subsequence, then by definition, $d_S(G)\geq n$, a contradiction. Then any $S$-sequence in $G$ of length $n$ must contain a zero-sum subsequence, so $D_S(G)\leq n=d_S(G)+1$. Then $d_S(G)<D_S(G)\leq d_S(G)+1$, so $D_S(G)=d_S(G)+1$.
\end{proof}

Considering Definition \ref{small}, one might reasonably expect that we have $d_{\{g\}}(G) = D_g(G)$, the relative Davenport constant with respect to $g$ defined by Ska{\l}ba. While this is true for $g \neq 0$, we should be careful to recall that $D_0(G)=D(G)$ and $d_{\{0\}}(G)= 0$. Notably, these invariants are almost identical for singletons. Hence, we will be able to take advantage of results on $D_g(G)$ to calculate $d_{\{g\}}(G)$ and subsequently $D_S(G)$. We presently develop a few simple results about these constants which will be helpful in the following sections.

\begin{theorem}
    \label{d in subsets}
    Let $G$ be a finite abelian group and $S_1\subseteq S_2$ nonempty subsets of $G$. Then $d_{S_1}(G)\leq d_{S_2}(G)$ and $D_{S_1}(G)\leq D_{S_2}(G)$.
\end{theorem}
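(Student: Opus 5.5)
The argument is a direct monotonicity check from the definitions, followed by an appeal to the theorem $D_S(G)=d_S(G)+1$ proved just above.

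First we compare the small constants. Every $S_1$-sum sequence is automatically an $S_2$-sum sequence, since its sum lies in $S_1\subseteq S_2$.

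If $d_{S_1}(G)=0$, the inequality $d_{S_1}(G)\leq d_{S_2}(G)$ is trivial, because $d_{S_2}(G)$ is a whole number.

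Otherwise, write $d=d_{S_1}(G)>0$. By definition there is an $S_1$-sum sequence $(g_1,\dots,g_d)$ with no zero-sum subsequence. This same sequence is an $S_2$-sum sequence of length $d$ with no zero-sum subsequence. Since $d_{S_2}(G)$ is the largest such length, $d_{S_2}(G)\geq d$.

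One point needs a word: the suprema are finite maxima. No sequence of length $\geq D(G)$ avoids zero-sum subsequences, so both $d_{S_i}(G)$ are well-defined integers, bounded by $D(G)-1$.

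For the large constants, apply the earlier theorem to each subset. This gives
\[
D_{S_1}(G)=d_{S_1}(G)+1\leq d_{S_2}(G)+1=D_{S_2}(G).
\]

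Alternatively, the large-constant inequality can be seen directly. Any $S_1$-sum sequence of length $D_{S_2}(G)$ is an $S_2$-sum sequence, so it contains a zero-sum subsequence. By minimality of $D_{S_1}(G)$, this gives $D_{S_1}(G)\leq D_{S_2}(G)$.

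None of these steps is hard. The only care needed is the degenerate case $d_{S_1}(G)=0$ (that is, $S_1=\{0\}$), which the convention in Definition \ref{small} handles.
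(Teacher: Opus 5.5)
Your proof is correct and follows essentially the same argument as the paper. You dispose of $d_{S_1}(G)=0$ trivially, note that a witnessing $S_1$-sum sequence with no zero-sum subsequence is also an $S_2$-sum sequence, and pass to the large constants via $D_S(G)=d_S(G)+1$; your direct minimality argument for $D_{S_1}(G)\leq D_{S_2}(G)$ and your finiteness remark are also valid.
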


\begin{proof}
    If $d_{S_1}(G)=0$, then the conclusion is immediate. Otherwise, let $d=d_{S_1}(G)$ and $\{g_1,\dots, g_d\}$ be an $S_1$-sequence in $G$ with no zero-sum subsequence. Then $g_1+\dots+g_d\in S_1\subseteq S_2$, so this is also an $S_2$-sequence of length $d$ with no $0$-subsequence. Then $d_{S_1}(G)\leq d_{S_2}(G)$. From this, it immediately follows that $D_{S_1}(G)=d_{S_1}(G)+1\leq d_{S_2}(G)+1=D_{S_2}(G)$.
\end{proof}

\begin{corollary}
    \label{davenport upper bound}
    Let $G$ be a finite abelian group and $S$ a nonempty subset of $G$. Then $$D_S(G)\leq D(G).$$
\end{corollary}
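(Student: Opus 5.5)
The plan is to get the corollary directly from Theorem \ref{d in subsets} together with the earlier proposition identifying $D_G(G)$ with the classical Davenport constant. No new combinatorial argument should be needed, because the corollary is just monotonicity applied to the largest possible subset.

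First, since $S$ is a nonempty subset of $G$, we have the inclusion $S\subseteq G$ of nonempty subsets. Applying Theorem \ref{d in subsets} with $S_1=S$ and $S_2=G$ gives
\[
D_S(G)\leq D_G(G).
\]
Second, the proposition $D_G(G)=D(G)$ rewrites the right-hand side as $D(G)$, which is the claim.

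The only point needing care is the proposition $D_G(G)=D(G)$, if one wants it justified rather than cited. Every $G$-sequence is trivially a $G$-sum sequence, so the condition defining $D_G(G)$ is exactly the defining property of $D(G)$: every sequence of that length contains a zero-sum subsequence. Hence the two minimal lengths coincide. I expect no genuine obstacle here; the argument is a two-line chain of previously established facts.
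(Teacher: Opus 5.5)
Your proof is correct and matches the paper, which states this corollary without proof immediately after Theorem~\ref{d in subsets}: take $S_1=S$, $S_2=G$, then use $D_G(G)=D(G)$. Your justification of $D_G(G)=D(G)$ is also fine: every $G$-sequence is trivially a $G$-sum sequence, so the two defining conditions coincide.
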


\begin{theorem}
    \label{d in quotients}
    Let $G$ be a finite abelian group with subgroup $H$, and let $G'=\quot{G}{H}$. Let $S'\subseteq G'$ and $S=\{s\in G|s+H\in S'\}$ (i.e., letting $\pi+H:G\to G'$ be the natural projection modulo $H$, $S=\pi^{-1}(S')$). Then $d_{S'}(G')\leq d_S(G)$ and $D_{S'}(G')\leq D_S(G)$.
\end{theorem}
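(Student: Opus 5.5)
We prove $d_{S'}(G')\leq d_S(G)$ by lifting sequences, and then get the statement for $D$ from the identity $D_S(G)=d_S(G)+1$ proved above. The idea is that a zero-sum-free $S'$-sum sequence in $G'$ lifts, through arbitrary coset representatives, to a zero-sum-free $S$-sum sequence in $G$ of the same length.

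If $d_{S'}(G')=0$, there is nothing to show. Otherwise, set $d=d_{S'}(G')$ and fix an $S'$-sum sequence $\{g_1',\dots,g_d'\}$ in $G'$ with no zero-sum subsequence. For each $i$, choose $g_i\in G$ with $\pi(g_i)=g_i'$, so that $g_i+H=g_i'$.

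We now check the two required properties of the lifted sequence $\{g_1,\dots,g_d\}$.

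First, it is an $S$-sum sequence. Since $\pi$ is a homomorphism, $\pi(g_1+\dots+g_d)=g_1'+\dots+g_d'\in S'$. Hence $g_1+\dots+g_d\in\pi^{-1}(S')=S$.

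Second, it has no zero-sum subsequence. Suppose some nonempty subsequence $\{g_{i_1},\dots,g_{i_k}\}$ summed to $0$ in $G$. Applying $\pi$ would give $g_{i_1}'+\dots+g_{i_k}'=0$ in $G'$. This is a zero-sum subsequence of the original sequence in $G'$, which is a contradiction.

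So $G$ contains an $S$-sum sequence of length $d$ with no zero-sum subsequence, and therefore $d_S(G)\geq d=d_{S'}(G')$.

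For the large constants, we apply the theorem $D_S(G)=d_S(G)+1$ in both groups:
\[
D_{S'}(G')=d_{S'}(G')+1\leq d_S(G)+1=D_S(G).
\]

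None of the steps is a real obstacle. The only point needing care is the edge case: $S$ must be nonempty for $d_S(G)$ and $D_S(G)$ to be defined. This holds whenever $S'$ is nonempty, because $\pi$ is surjective. The case $d_{S'}(G')=0$ is covered trivially.
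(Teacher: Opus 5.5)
Your proof is correct and follows the paper's argument essentially verbatim. You lift a zero-sum-free $S'$-sum sequence through arbitrary coset representatives, note that its sum lies in $\pi^{-1}(S')=S$ and that any zero-sum subsequence in $G$ would project to one in $G'$, and then pass to the large constants via $D_S(G)=d_S(G)+1$; your remark that $S$ is nonempty because $\pi$ is surjective is a small point the paper leaves implicit.
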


\begin{proof}
    First, if $d_{S'}(G')=0$, then the result is trivial. Otherwise, we let $d=d_{S'}(G')$ and select an $S'$-sequence $\{g_1',\dots,g_d'\}$ in $G'$ with no zero-sum subsequence. For each $1\leq i\leq d$, we select some $g_i\in G$ such that $g_i'=g_i+H$; then $(g_1+\cdots +g_d)+H=g_1'+\cdots+g_d'\in S'$. Then by definition of $S$, $g_1+\dots+g_d\in S$. Furthermore, $\{g_1,\dots,g_d\}$ cannot have a zero-sum subsequence in $G$, since this would force $\{g_1',\dots,g_d'\}$ to have a zero-sum subsequence in $G'$. Then there exists an $S$-sequence in $G$ with no zero-sum subsequence of length $d$, so $d_S(G)\geq d=d_{S'}(G')$. It immediately follows that $D_S(G)=d_S(G)+1\geq d_{S'}(G')+1=D_{S'}(G').$
\end{proof}

\begin{proposition}
    \label{automorphic image}
    Let $G$ be a finite abelian group and $S$ a nonempty subset of $G$. Also let $\sigma\in \aut(G)$. Then $d_S(G)=d_{\sigma(S)}(G)$. In particular, if $H$ is a subgroup of $G$ and $\sigma\in \aut_H(G)$ (i.e., $\sigma$ is an automorphism of $G$ such that $\sigma(H)=H$), then for any $g\in G$, $D_{g+H}(G)=D_{\sigma(G)+H}(G)$.
\end{proposition}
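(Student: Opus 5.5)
The argument is a direct transport of structure: an automorphism $\sigma$ of $G$ sends sequences to sequences and preserves both the property of summing into a given set and the property of having a zero-sum subsequence. Throughout, we read the "in particular" clause as $D_{g+H}(G)=D_{\sigma(g)+H}(G)$; the $\sigma(G)$ in the statement appears to be a typo for $\sigma(g)$, since $\sigma(G)+H=G$ would make the claim false in general.

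\textbf{Main inequality.} We show $d_S(G)\le d_{\sigma(S)}(G)$ by pushing forward an extremal sequence.

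If $d_S(G)=0$ there is nothing to prove. Otherwise, let $d=d_S(G)$ and choose an $S$-sum sequence $\{g_1,\dots,g_d\}$ with no zero-sum subsequence. Consider $\{\sigma(g_1),\dots,\sigma(g_d)\}$.

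First, since $\sigma$ is a homomorphism,
\[
\sigma(g_1)+\dots+\sigma(g_d)=\sigma(g_1+\dots+g_d)\in\sigma(S),
\]
so the new sequence is a $\sigma(S)$-sum sequence.

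Second, suppose some subsequence satisfies $\sigma(g_{i_1})+\dots+\sigma(g_{i_k})=0$. Then $\sigma(g_{i_1}+\dots+g_{i_k})=0$, and injectivity of $\sigma$ gives $g_{i_1}+\dots+g_{i_k}=0$, contradicting the choice of the original sequence. Hence the new sequence has no zero-sum subsequence, and $d_{\sigma(S)}(G)\ge d$.

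\textbf{Reverse inequality.} Apply the same argument to $\sigma^{-1}\in\aut(G)$ and the set $\sigma(S)$. This gives $d_{\sigma(S)}(G)\le d_{\sigma^{-1}(\sigma(S))}(G)=d_S(G)$, so $d_S(G)=d_{\sigma(S)}(G)$. Adding $1$ to both sides, using the theorem $D_S(G)=d_S(G)+1$, yields $D_S(G)=D_{\sigma(S)}(G)$.

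\textbf{The "in particular" clause.} Take $S=g+H$ and $\sigma$ with $\sigma(H)=H$. Then
\[
\sigma(g+H)=\sigma(g)+\sigma(H)=\sigma(g)+H,
\]
so the previous paragraph gives $D_{g+H}(G)=D_{\sigma(g)+H}(G)$.

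There is no genuine obstacle here. The only points needing care are using injectivity of $\sigma$ to rule out new zero-sum subsequences, and noting the apparent typo in the last clause.
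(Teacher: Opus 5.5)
Your proof is correct and follows essentially the same route as the paper: push the sequence forward under $\sigma$, use injectivity to rule out new zero-sum subsequences, get the reverse inequality from $\sigma^{-1}$, and deduce the coset statement from $\sigma(g+H)=\sigma(g)+H$. You are also right that $\sigma(G)$ is a typo for $\sigma(g)$, which the paper's own proof confirms.
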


\begin{proof}
    First, let $\{g_1,\dots,g_n\}$ be an $S$-sequence in $G$ with no zero-sum subsequence; in particular, let $s:=g_1+\dots+g_n\in S$. Then $\{\sigma(g_1),\dots,\sigma(g_n)\}$ is a $\sigma(S)$-sequence in $G$; specifically, $\sigma(g_1)+\dots+\sigma(g_n)=\sigma(s)\in\sigma(S)$. Moreover, since $\sigma$ is an automorphism, then if there is a zero-sum subsequence, say (after possibly reordering) $\{\sigma(g_1),\dots,\sigma(g_k)\}$, then $\sigma(g_1+\dots+g_k)=0$ and thus $g_1+\dots+g_k=0$, a contradiction. Then $\{\sigma(g_1),\dots,\sigma(g_n)\}$ is a $\sigma(S)$-sequence in $G$ of length $n$ with no zero-sum subsequence, so $d_S(G)\leq d_{\sigma(S)}(G)$. The reverse inequality follows immediately from the observation that $S=\sigma^{-1}(\sigma(S))$, with $\sigma^{-1}\in \aut(G)$. Then $d_S(G)=d_{\sigma(S)}(G)$.

    The rest of the result follows from the observation that for any subgroup $H\leq G$, $\sigma\in \aut_H(G)$, and $g\in G$, we have $\sigma(g+H)=\sigma(g)+H$.
\end{proof}

\begin{corollary}
    \label{negative automorphism}
    Let $G$ be a finite abelian group, $S$ a nonempty subset of $G$, and $-S$ the set of additive inverses of elements of $S$. Then $D_S(G)=D_{-S}(G)$. In particular, for any subgroup $H$ of $G$ and coset $\alpha\in \quot{G}{H}$, $D_\alpha(G)=D_{-\alpha}(G)$.
\end{corollary}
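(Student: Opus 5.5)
This is a direct application of Proposition \ref{automorphic image} to the negation automorphism. Since $G$ is abelian, the map $\sigma: G\to G$ given by $\sigma(g)=-g$ is a group homomorphism: $\sigma(g+h)=-(g+h)=-g-h=\sigma(g)+\sigma(h)$, because the group is abelian. It is its own inverse, so $\sigma\in\aut(G)$. By construction $\sigma(S)=-S$ for any nonempty subset $S\subseteq G$.

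Proposition \ref{automorphic image} then gives $d_S(G)=d_{\sigma(S)}(G)=d_{-S}(G)$. Applying the earlier theorem $D_S(G)=d_S(G)+1$ to both $S$ and $-S$ (the latter is also nonempty) yields
\[
D_S(G)=d_S(G)+1=d_{-S}(G)+1=D_{-S}(G).
\]

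For the second statement, let $H\le G$ and $\alpha=g+H\in\quot{G}{H}$, regarded as a subset of $G$. Every subgroup satisfies $\sigma(H)=-H=H$, so $\sigma\in\aut_H(G)$. Also $\sigma(g+H)=-g+H$, and this is the coset $-\alpha$ in the quotient group. Specializing the first part to $S=\alpha$ therefore gives $D_\alpha(G)=D_{-\alpha}(G)$.

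None of these steps is a real obstacle. The only points to check are that negation is an automorphism, which uses commutativity, and that $-S$ is exactly the set of inverses, so it coincides with $\sigma(S)$.
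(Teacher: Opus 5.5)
Your proposal is correct and follows the paper's proof: apply Proposition~\ref{automorphic image} to the negation automorphism $g\mapsto -g$, which preserves every subgroup. You are slightly more explicit than the paper. You pass from $d_S$ to $D_S$ via $D_S(G)=d_S(G)+1$, and you check directly that $\sigma(g+H)=-g+H$.
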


\begin{proof}
    The result follows immediately from the previous theorem, since $\sigma:G\to G$ defined by $\sigma(g)=-g$ for all $g\in G$ is an automorphism of $G$ which fixes any subgroup $H$.
\end{proof}

For the remainder of this section, we consider the case when $G$ is cyclic and our sets are taken to be cosets of subgroups. First, consider the following result of Ska{\l}ba on the relative Davenport constant of a cyclic group. 

\begin{lemma}\cite[Theorem 2]{skalba1993numbers}\label{skalba-cyclic}
    Let $G$ be a finite cyclic group and $a\in G$, then
    \[
    D_a(G) = \begin{cases}
        |G|& \text{for } a =0\\
        |G|-|G|/|a|& \text{for } a \neq0      
    \end{cases}
    \]
    where $|a|$ denotes the order of $a$.
\end{lemma}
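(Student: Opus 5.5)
The plan is to treat the two cases separately, proving a lower bound by an explicit construction and a matching upper bound by a counting argument on partial sums. Throughout, write $G=\bZ/n\bZ$ and use Ska{\l}ba's convention: $D_a(G)$ is the maximal length of an irreducible sequence (no proper zero-sum subsequence) with sum $a$.

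\textbf{Case $a=0$.} Here $D_0(G)$ is the maximal length of a minimal zero-sum sequence, i.e.\ $D(G)$. For the lower bound, the sequence consisting of $n$ copies of $1$ is a minimal zero-sum sequence. For the upper bound, take any sequence of length $m>n$ with sum $0$ and look at the prefix sums of its first $n$ terms, together with the empty sum. These are $n+1$ values in a group of order $n$, so two coincide. Their difference is a nonempty zero-sum block of length at most $n<m$, hence a proper zero-sum subsequence. So $D_0(G)=n$.

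\textbf{Case $a\neq0$, lower bound.} Let $|a|=k$, so $a$ generates the subgroup of order $k$ and $a=u\cdot(n/k)$ for some $u$ coprime to $k$. Since the units of $\bZ/n\bZ$ surject onto the units of $\bZ/k\bZ$, we may choose a unit $\sigma$ of $\bZ/n\bZ$ with $a=\sigma\cdot(n/k)$. By Proposition \ref{automorphic image}, applied to the singleton $\{a\}$ (note $d_{\{g\}}=D_g$ for $g\neq0$), we may therefore assume $a=n/k$. Now take $n-n/k$ copies of the element $1$. Their sum is $-n/k$, not $n/k$, so instead we use the negation automorphism of Corollary \ref{negative automorphism}: the sequence of $n-n/k$ copies of $-1$ has sum $n/k$. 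It has no zero-sum subsequence, since any subsequence sum is $-j$ with $1\le j<n$. This gives $D_a(G)\ge n-n/k$.

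\textbf{Case $a\neq0$, upper bound.} This is the main obstacle. Suppose $g_1,\dots,g_m$ has sum $a$ and no zero-sum subsequence. Then every subsequence sum $s$ satisfies $s\neq0$, and also $s\neq a$ unless the subsequence is everything, since otherwise the complementary subsequence would sum to zero. More generally, the complement pairing $T\mapsto a-\Sigma(T)$ shows that the set $\Sigma$ of nonempty proper subsequence sums is closed under $x\mapsto a-x$ and avoids both $0$ and $a$.

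I would try to show that $\Sigma$ in fact avoids the whole coset $\langle a\rangle$ apart from the total sum, at least after a suitable reordering. The cleanest route is probably to consider the prefix sums $s_0=0,s_1,\dots,s_m=a$. These are pairwise distinct, since otherwise a block would sum to zero. If we can show that for a well-chosen ordering the intermediate $s_i$ avoid $\langle a\rangle$, then the $m+1$ prefix sums consist of $2$ elements of $\langle a\rangle$ and $m-1$ elements outside it. That gives $m-1\le n-k$, which is weaker than needed unless $k=n/k\cdot$ something, so the bound has to be sharpened.

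Instead I would follow Ska{\l}ba's argument: pass to the quotient by $\langle a\rangle$ and combine the zero-sum-free count there with the count inside the subgroup, using the Davenport constant of the quotient, which has order $n/k$. If the direct approach stalls, I would cite Ska{\l}ba's original proof for this inequality, since the paper attributes the lemma to him.
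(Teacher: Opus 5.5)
\paragraph{Gap: the upper bound for $a\neq 0$.} The paper gives no proof of this lemma; it simply quotes Ska{\l}ba. Your case $a=0$ and your lower bound for $a\neq0$ are correct. The upper bound $D_a(G)\le n-n/k$ for $a\neq 0$, however, is never established, and it is the only nontrivial content of the lemma. Your proposal ends by deferring to Ska{\l}ba. The prefix-sum route you sketch before that rests on a false claim: the intermediate prefix sums cannot in general be made to avoid $\langle a\rangle$, since if $a$ generates $G$ then $\langle a\rangle=G$. Concretely, in $\mathbb{Z}/4\mathbb{Z}$ with $a=1$, the sequence $(3,3,3)$ is zero-sum free with sum $1$ and prefix sums $0,3,2,1$. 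There the inequality $m-1\le n-k$ you would derive gives $m\le 1$, which is false, not merely weak.

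\paragraph{How to close it.} The quotient idea you mention last does work, and it needs only the pigeonhole fact you already proved. Let $H=\langle a\rangle$, so $|H|=k$ and $G/H$ is cyclic of order $n/k$. Suppose $S=(g_1,\dots,g_m)$ is zero-sum free with sum $a$. Since $a\in H$, the image of $S$ in $G/H$ is a zero-sum sequence, so $S$ can be partitioned into subsequences $T_1,\dots,T_r$ whose images are minimal zero-sum sequences in $G/H$. By your $a=0$ case applied to $G/H$, $|T_i|\le n/k$ for each $i$. Write $s_i\in H$ for the sum of $T_i$. The sequence $(s_1,\dots,s_r)$ in $H$ has no zero-sum subsequence: a zero-sum subfamily indexed by $J$ would make the concatenation of the $T_i$ with $i\in J$ a zero-sum subsequence of $S$. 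The pigeonhole argument in the group $H$ of order $k$ then gives $r\le k-1$, so $m\le (k-1)\,n/k=n-n/k$. With this step written out, your proposal becomes a complete, self-contained proof of the cited result.
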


Notably, since $d_{\{g\}}(G)=D_g(G)$ for all $g\neq 0$ and $d_{\{0\}}=0$, this result actually demonstrates one way in which the small $S$-relative Davenport constant behaves more nicely than the relative Davenport constant: in all cases, $d_{\{g\}}(G)=\abs{G}-\abs{G}/\abs{g}$.

\begin{theorem}
    \label{d-value for cyclic}
    Let $G$ be a finite cyclic group of order $n$ and $H=\gen{m}\leq G$ (without loss of generality, we assume $m|n$). Let $\alpha\in \quot{G}{H}$ and $g\in \alpha$ such that $1\leq g\leq m$. Then $$d_\alpha(G)=d_{\{\gcd(g,m)\}}(G)=n-\gcd(g,m).$$
\end{theorem}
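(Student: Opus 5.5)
We prove $d_\alpha(G)=n-\gcd(g,m)$ by a matching lower bound (a single element of $\alpha$) and upper bound (maximizing $n-n/|a|$ over the coset). Identify $G$ with $\bZ/n\bZ$, so that $H=\gen{m}$ has order $n/m$ and the coset is
\[
\alpha=g+H=\{g+km \bmod n : 0\le k< n/m\}.
\]
Since $d_S(G)$ for $S\neq\{0\}$ is the maximum of $d_{\{a\}}(G)$ over $a\in S$, it suffices to compute this maximum. Lemma \ref{skalba-cyclic} and the subsequent remark give $d_{\{a\}}(G)=n-n/|a|=n-\gcd(a,n)$ for every $a$, including $a=0$, since $\gcd(0,n)=n$. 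So the whole theorem reduces to
\[
\min_{a\in\alpha}\gcd(a,n)=\gcd(g,m),
\]
with the minimum attained at an element $a$ satisfying $\gcd(a,n)=\gcd(g,m)$.

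\textbf{Lower bound.} For any $a=g+km$ we have $\gcd(g,m)\mid a$, and $\gcd(g,m)$ divides $n$ because $m\mid n$. Hence $\gcd(g,m)\mid\gcd(a,n)$, so $\gcd(a,n)\ge\gcd(g,m)$ for all $a\in\alpha$. This gives $d_\alpha(G)\le n-\gcd(g,m)$.

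\textbf{Attainment.} We need some $k$ with $\gcd(g+km,n)=\gcd(g,m)$. Write $d=\gcd(g,m)$, $g=dg'$, $m=dm'$, $n=dn'$, so that $\gcd(g',m')=1$. The goal becomes finding $k$ with $\gcd(g'+km',n')=1$. This is the standard Dirichlet-style coprimality lemma, proved by a CRT argument:
\begin{itemize}
\item For a prime $p\mid n'$ with $p\mid m'$, we have $p\nmid g'$, so $p\nmid g'+km'$ for every $k$.
\item For a prime $p\mid n'$ with $p\nmid m'$, exactly one residue of $k$ modulo $p$ makes $p\mid g'+km'$, and we choose $k$ to avoid it. This is always possible since $p\ge2$.
\end{itemize}
By CRT over the finitely many primes of $n'$ we obtain a suitable $k$. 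Reducing $k$ modulo $n/m$ does not change $g+km \bmod n$, so $a=g+km$ genuinely lies in $\alpha$. Then $\gcd(a,n)=d\gcd(g'+km',n')=d$, as required.

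\textbf{Main obstacle.} The one delicate step is the attainment, and there are two points to check in it.
\begin{itemize}
\item The CRT choice must be valid: the residue classes to avoid are determined modulo distinct primes, so they combine.
\item The edge case $g=m$ must be handled. Then $\gcd(g,m)=m$, and the coset is $H$ itself. If moreover $H=\{0\}$ (i.e., $m=n$), the formula must give $d_{\{0\}}(G)=0=n-n$, which is consistent with the convention $\gcd(0,n)=n$.
\end{itemize}

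Finally, the middle equality $d_\alpha(G)=d_{\{\gcd(g,m)\}}(G)$ follows directly. Viewing $\gcd(g,m)$ as an element of $G$, its order is $n/\gcd(g,m)$, so $d_{\{\gcd(g,m)\}}(G)=n-\gcd(g,m)$ by Lemma \ref{skalba-cyclic}.
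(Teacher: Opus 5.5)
Your proof is correct and follows the same outline as the paper's. Both arguments use Lemma~\ref{skalba-cyclic} to reduce to showing $\min_{a\in\alpha}\gcd(a,n)=\gcd(g,m)$, obtain $\gcd(g,m)\mid\gcd(a,n)$ for every $a\in\alpha$ by divisibility, and attain equality by a Chinese Remainder Theorem construction.

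The only difference is how the last step is packaged. The paper prescribes $a$ modulo each $p_i^{a_i}$ as the least positive residue of $g$ modulo $p_i^{b_i}$, using the normalization $1\le g\le m$ to control valuations. You instead divide out $d=\gcd(g,m)$ and choose $k$ to avoid one residue class modulo each prime of $n'$ not dividing $m'$, which also shows that the normalization of $g$ is not actually needed.
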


\begin{proof}
    Assume without loss of generality that $G=\bZ_n$; we will treat the elements of $G$ as integers. Since $d_\alpha(G)=\max\{d_{\{a\}}(G)|a\in \alpha\}$, it will suffice by Lemma \ref{skalba-cyclic} to show that $\gcd(g,m)\leq \gcd(a,n)=\frac{n}{\abs{a}}$ for every $a\in \alpha$, with equality for some $a\in \alpha$. The inequality is straightforward; letting $d=\gcd(g,m)$, we note that there must exist $k,x,y\in\bN$ such that $n=km$, $g=xd$, and $m=yd$. Then $n=km=(ky)d$, and for any $a=g+rm\in \alpha$, $a=xd+ryd=(x+ry)d$. Then $d$ is a common divisor of $n$ and $a$, so $d=\gcd(g,m)\leq \gcd(a,n)$ for every $a\in\alpha$. We simply need to show that $\gcd(g,m)=\gcd(a,n)$ for some $a\in \alpha$.

    First, assume that $n=p^r$ for some prime $p$ and $r\in\bN$. Since $m$ is a divisor of $n$, then $H=\gen{m}=\gen{p^s}$ for some $0\leq s\leq r$. We will denote $\gcd(g,n)=p^a$; since $1\leq g\leq m$, we know that $0\leq a\leq s$. Since $p^a$ is the largest power of $p$ dividing $g$ and $a\leq s\leq r$, this immediately tells us that $$\gcd(g,m)=\gcd(g,p^s)=p^a=\gcd(g,n).$$
    Then $\gcd(g,m)=\gcd(g,n)$ with $g\in g+H=\alpha$, as desired.

    Now dropping the assumption than $n$ is a prime power, we write $n=p_1^{a_1}\cdots p_k^{a_k}$ for distinct primes $p_i$ and $a_i\in\bN$, $1\leq i\leq k$. Since $m$ is a divisor of $n$, then necessarily $m=p_1^{b_1}\cdots p_k^{b_k}$ for some $0\leq b_i\leq a_i$. Decomposing our cyclic group $G=\bZ_n$, we note that
    $$G=\bZ_n\cong \bZ_{p_1^{a_1}}\oplus\cdots\oplus \bZ_{p_k^{a_k}}.$$
    Considering the image of $H$ under this isomorphism and applying the Chinese Remainder Theorem yields
    $$H \cong   \gen{p_1^{b_1}}\oplus\cdots\oplus\gen{p_k^{b_k}}\leq \bZ_{p_1^{a_1}}\oplus\cdots\oplus \bZ_{p_k^{a_k}}.$$
    Now for each $1\leq i\leq k$, we define $g_i$ such that $g_i\equiv g\modulo{p_i^{b_i}}$ and $1\leq g_i\leq p_i^{b_i}$. Using the case when $n$ was a prime power, we note that $\gcd(g_i,p_i^{b_i})=\gcd(g_i,p_i^{a_i})$ for every $1\leq i\leq k$. By the Chinese Remainder Theorem, we can now select $a\in G$ such that $a\equiv g_i\modulo{p_i^{a_i}}$ for every $1\leq i\leq k$. Since $a\equiv g_i\equiv g\modulo{p_i^{b_i}}$ for each $1\leq i\leq k$, we have that $a\equiv g\modulo{m}$, and thus $a\in g+H=\alpha$. We note the following:
    $$\gcd(g,m)=\prod_{i=1}^k\gcd(g,p_i^{b_i})=\prod_{i=1}^k\gcd(g_i,p_i^{b_i})=\prod_{i=1}^k\gcd(g_i,p_i^{a_i})=\prod_{i=1}^k\gcd(a,p_i^{a_i})=\gcd(a,n).$$
    Then $\gcd(g,m)=\gcd(a,n)$ for some $a\in \alpha$ and $\gcd(g,m)\leq \gcd(a,n)$ for every $a\in \alpha$, so
    $$d_\alpha(G)=\max\{d_a(G)|a\in\alpha\}=\max\{n-\gcd(a,n)|a\in\alpha\}=n-\gcd(g,m)=d_{\{\gcd(g,m)\}}(G).$$
\end{proof}

\begin{corollary}\label{larger relative dav for cyclic generator}
    Let $G$ be a finite cyclic group and $H\leq G$. For any $\alpha,\beta\in \quot{G}{H}$ such that $\beta\in \gen{\alpha}$, $d_\alpha(G)\geq d_\beta(G)$.
\end{corollary}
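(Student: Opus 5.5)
The plan is to reduce everything to Theorem \ref{d-value for cyclic}. That theorem gives $d_\alpha(G)=n-\gcd(g,m)$ and $d_\beta(G)=n-\gcd(h,m)$, where $n=\abs{G}$, $H=\gen{m}$ with $m\mid n$, $g\in\alpha$ and $h\in\beta$ are the representatives with $1\leq g,h\leq m$, and we identify $G=\bZ_n$. The claimed inequality $d_\alpha(G)\geq d_\beta(G)$ is therefore equivalent to
\[
\gcd(g,m)\mid\gcd(h,m),
\]
which in particular implies $\gcd(g,m)\leq\gcd(h,m)$.

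First I will translate the hypothesis $\beta\in\gen{\alpha}$ into a statement about representatives. Since $\gen{\alpha}$ is the cyclic subgroup of $\quot{G}{H}\cong\bZ_m$ generated by $\alpha=g+H$, we have $\beta=k\alpha$ for some integer $k\geq 0$. Hence $h\equiv kg \pmod m$, because $\beta=kg+H$ and $H=m\bZ_n$ corresponds to the multiples of $m$.

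Next I will show the divisibility. Let $d=\gcd(g,m)$. Then $d\mid kg$ and $d\mid m$. Since $h=kg+tm$ for some integer $t$, it follows that $d\mid h$. Therefore $d\mid\gcd(h,m)$, and so $\gcd(g,m)\leq\gcd(h,m)$. Substituting into Theorem \ref{d-value for cyclic} gives $d_\alpha(G)=n-\gcd(g,m)\geq n-\gcd(h,m)=d_\beta(G)$.

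The only step needing care is the bookkeeping. Theorem \ref{d-value for cyclic} requires representatives in the range $1\leq g\leq m$, and we need that $\gcd(\cdot,m)$ is well defined on cosets of $m\bZ$. This is clear, but the case $\beta=H$ (the identity coset) should be checked. There the representative is $h=m$, so $\gcd(h,m)=m$ and $d_\beta(G)=n-m$, which is the smallest possible value. So the inequality holds, and it is consistent with the divisibility argument since every $d$ divides $m$. I do not expect a real obstacle: the corollary is essentially the observation that multiples of a coset have gcd with $m$ divisible by that of the coset.
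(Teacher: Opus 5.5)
Your proof is correct and follows the paper's argument: write $h=kg+tm$, observe that $\gcd(g,m)$ divides both $h$ and $m$, hence $\gcd(g,m)\leq\gcd(h,m)$, and conclude via Theorem~\ref{d-value for cyclic}. One wording slip: the inequality $d_\alpha(G)\geq d_\beta(G)$ is implied by the divisibility $\gcd(g,m)\mid\gcd(h,m)$ but is not equivalent to it; since you only use that implication, this does not affect the proof.
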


\begin{proof}
    Identifying $G$ with $\bZ_n$ and treating the elements of $G$ as integers, we let $H=\gen{m}$ for some divisor $m$ of $n$ and select $g_1\in \alpha$ and $g_2\in\beta$ such that $1\leq g_1\leq m$ and $1\leq g_2\leq m$. Since $\beta\in \gen{\alpha}$, then there exists some $a\in\alpha$ and $s\in\bZ$ such that $g_2\equiv sa\modulo{m}$. Since $\alpha=a+H=g_1+H$, we can therefore select $t\in\bZ$ such that $g_2=sg_1+tm$.
    
    Now let $d_1=\gcd(g_1,m)$ and $d_2=\gcd(g_2,m)$. Since $d_1$ is a common divisor of $g_1$ and $m$, it must also divide $g_2=sg_1+tm$. Thus, $d_1$ is a common divisor of $g_2$ and $m$, so $d_1\leq d_2$. By Theorem \ref{d-value for cyclic}, this gives:
    $$d_\alpha(G)=n-d_1\geq n-d_2=d_\beta(G).$$
\end{proof}

\begin{corollary}\label{S-relative beta minus alpha}
    Let $G$ be a finite cyclic group and $H_1<H_2\leq G$. For some $g\in G$, we denote $\alpha=g+H_1$ and $\beta=g+H_2$; in particular, note that $\alpha\subsetneq \beta$. Then $d_\beta(G)=d_{\beta\backslash\alpha}(G)$. Equivalently, there exists a $\beta$-sequence $\{g_1,\dots,g_d\}$ of length $d=d_\beta(G)$ with no $0$-subsequence in $G$ whose sum does not lie in the proper subset $\alpha$.
\end{corollary}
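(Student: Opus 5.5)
The plan is to reduce $d_\beta(G)$ and $d_{\beta\backslash\alpha}(G)$ to singleton values and then build a suitable element of $\beta\backslash\alpha$ by the Chinese Remainder Theorem, one prime at a time, as in the proof of Theorem \ref{d-value for cyclic}. Identify $G=\bZ_n$ and write $H_1=\gen{m_1}$, $H_2=\gen{m_2}$ with $m_2\mid m_1\mid n$ and $m_2<m_1$. For any nonempty $S$ we have $d_S(G)=\max\{d_{\{a\}}(G)\mid a\in S\}$, and $d_{\{a\}}(G)=n-\gcd(a,n)$ by Lemma \ref{skalba-cyclic} (this also gives $0$ at $a=0$). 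Theorem \ref{d-value for cyclic} gives $d_\beta(G)=n-e$ with $e=\gcd(g,m_2)$, and every $a\in\beta$ satisfies $e\mid\gcd(a,n)$. So $d_{\beta\backslash\alpha}(G)\le d_\beta(G)$ by Theorem \ref{d in subsets}, and the whole statement reduces to one claim: there is an $a\in\beta$ with $a\not\equiv g\pmod{m_1}$ and $\gcd(a,n)=e$.

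To build $a$, factor $n=\prod p^{a_p}$, $m_1=\prod p^{c_p}$, $m_2=\prod p^{b_p}$, with $b_p\le c_p\le a_p$. Since $m_2<m_1$, we can fix a prime $q$ with $c_q>b_q$. At every prime $p\neq q$, use the local choice $g_p$ from the proof of Theorem \ref{d-value for cyclic}: it satisfies $g_p\equiv g \pmod{p^{b_p}}$ and $v_p(g_p)=\min(v_p(g),b_p)=v_p(e)$. At $q$ we need a residue $x$ modulo $q^{a_q}$ with three properties: $x\equiv g\pmod{q^{b_q}}$, $v_q(x)=v_q(e)$, and $x\not\equiv g\pmod{q^{c_q}}$. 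CRT then glues these local choices into $a$. This $a$ lies in $\beta$, satisfies $\gcd(a,n)=e$, and is not in $\alpha$, because its $q$-component already differs from $g$ modulo $q^{c_q}$.

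The main obstacle is the local construction at $q$. I would split into two cases.

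\emph{Case $v_q(g)<b_q$.} Every lift of $g$ modulo $q^{b_q}$ automatically has $q$-adic valuation $v_q(g)=v_q(e)$. There are $q^{c_q-b_q}\ge 2$ lifts modulo $q^{c_q}$, so one of them differs from $g$.

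\emph{Case $v_q(g)\ge b_q$.} We need $x\equiv 0\pmod{q^{b_q}}$ with $v_q(x)=b_q$ exactly; this is possible since $b_q<c_q\le a_q$. Modulo $q^{c_q}$ there are $q^{c_q-b_q}-q^{c_q-b_q-1}$ such residues. This count is at least $2$ except when $q=2$ and $c_q=b_q+1$, so outside that situation we can choose one not congruent to $g$.

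I expect the exceptional configuration to be a genuine problem rather than an artifact of the method. Suppose that for every prime $q$ with $c_q>b_q$ we have $q=2$, $c_2=b_2+1$ and $2^{b_2}\mid g$. Then the only admissible residue is forced to equal $g$ modulo $2^{c_2}$. Concretely, take $n=4$, $m_2=2$, $m_1=4$, $g=2$. Then $\beta=\{0,2\}$ and $\alpha=\{2\}$, so $d_\beta(G)=2$ while $d_{\beta\backslash\alpha}(G)=d_{\{0\}}(G)=0$. The write-up would therefore prove the claim whenever some prime $q$ with $c_q>b_q$ avoids that configuration (in particular whenever $m_1/m_2$ is not $2$, or $g$ is odd). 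It would also record that, in the remaining case, an extra hypothesis is needed for the statement to hold, for instance $g\notin H_2$ or $m_1/m_2\neq 2$.
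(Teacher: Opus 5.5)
Your conclusion is correct: the statement is false as written, and your example is a valid counterexample. In $G=\bZ_4$ with $H_2=\gen{2}$, $H_1=\{0\}$ and $g=2$, we get $\beta=\{0,2\}$ and $\alpha=\{2\}$. The sequence $\{1,1\}$ shows $d_\beta(G)=2$, while $d_{\beta\backslash\alpha}(G)=d_{\{0\}}(G)=0$.

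The paper's proof uses the same prime-by-prime CRT construction as yours, and it breaks exactly where you located the difficulty. In its second case it replaces $g_1$ by $g_1'=g_1+p_1^{b_1}$ and asserts $\gcd(g_1',p_1^{b_1})=\gcd(g_1',p_1^{a_1})$. This is false whenever $p_1=2$ and $2^{b_1}\mid g$, since then $g_1=2^{b_1}$ and $g_1'=2^{b_1+1}$; in your example it outputs $b'=0$. Your count of admissible residues modulo $q^{c_q}$ is the right repair. So is your freedom to choose which prime $q$ with $c_q>b_q$ to perturb, whereas the paper fixes $p_1$ ``without loss of generality''. Together these also cover cases such as $p_1=2$, $c_1\ge b_1+2$, $2^{b_1}\mid g$. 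There the statement holds and $g_1+2^{b_1+1}$ works, but the paper's particular perturbation fails.

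Your description of the bad case is not accurate, however.

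\emph{The excluded configuration is too broad.} When $q=2$ and $c_2=b_2+1$, the unique admissible class is $x\equiv 2^{b_2}\pmod{2^{b_2+1}}$. It agrees with $g$ only if $v_2(g)=b_2$ exactly. If $2^{b_2+1}\mid g$, your construction still succeeds: for instance $n=8$, $m_2=2$, $m_1=4$, $g=4$ gives $\beta\backslash\alpha=\{2,6\}$ and $d_\beta(G)=6=d_{\beta\backslash\alpha}(G)$. This matters because your excluded configuration contains the case $g\in H_1$, i.e.\ $\alpha=H_1$ and $\beta=H_2$. That is the only way the paper uses this corollary, with $\beta=\ker(\tau_{a-j})$ and $\alpha=\ker(\tau_{a-j+1})$ in Section 5. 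Those applications are therefore unaffected.

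\emph{The proposed safe cases are wrong.} ``$g$ odd'' does not help when $m_2$ is odd, and $g\notin H_2$ is not a sufficient extra hypothesis. Take $n=6$, $m_2=3$, $m_1=6$, $g=1$. Then $\beta=\{1,4\}$ and $\alpha=\{1\}$, so $d_\beta(G)=5$ but $d_{\beta\backslash\alpha}(G)=d_{\{4\}}(G)=4$, with $g$ odd and $g\notin H_2$.

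\emph{The sharp criterion.} Equality fails if and only if $m_1=2m_2$ and $v_2(g)=v_2(m_2)$.
In that case every $a\in\beta\backslash\alpha$ has the form $g+m_2t$ with $t$ odd. Hence $2^{b_2+1}\mid\gcd(a,n)$, so $\gcd(a,n)\neq\gcd(g,m_2)$.
In every other case your construction produces the required $a$.
In particular, Conjecture \ref{d in subgroups} is already false for cyclic groups and should be restricted, for instance to $g\in H_1$.
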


\begin{proof}
    Since $\beta\backslash\alpha\subset\beta$, then Proposition \ref{d in subsets} immediately tells us that $d_{\beta}(G)\geq d_{\beta\backslash\alpha}(G)$. We will show the reverse inequality holds as well.

    As usual, we identify $G$ with $\bZ_n$ for some $n\in\bN$ and $H_1$ and $H_2$ with subgroups $\gen{m_1}$ and $\gen{m_2}$ for some divisors $m_1$ and $m_2$ of $n$. Since $H_1$ is a proper subgroup of $H_2$, we note that $m_2$ must be a proper divisor of $m_1$. We factor these integers into primes:
    $$n=p_1^{a_1}\cdots p_k^{a_k};\quad m_1=p_1^{c_1}\cdots p_k^{c_k};\quad m_2=p_1^{b_1}\cdots p_k^{b_k}.$$
    Since $m_2$ must be a proper divisor of $m_1$, we may assume without loss of generality that $b_1<c_1$.

    Following the construction from Theorem \ref{d-value for cyclic}, we let $1\leq g\leq m_2$ be an element of $\beta$ and select $g_i\equiv g\modulo{p_i^{b_i}}$ with $1\leq g_i\leq p_i^{b_i}$ for each $1\leq i\leq k$. This gives $\gcd(g_i,p_i^{b_i})=\gcd(g_i,p_i^{a_i})$, so selecting $b\in G$ such that $b\equiv g_i\modulo{p_i^{a_i}}$ for every $1\leq i\leq k$ gives an element $b\in \beta$ such that $\gcd(g,m_2)=\gcd(b,n)$. If this element $b$ happens to lie outside of $\alpha$ already, then
    $$d_\beta(G)=n-\gcd(g,m_2)=n-\gcd(b,n)=d_{\{b\}}(G)\leq d_{\beta\backslash\alpha}(G).$$

    Now assume that this element $b$ we constructed in the previous step lies in $\alpha$. Since we assumed that $b_1<c_1$, we now choose $g_1'=g_1+p_1^{b_1}$ and $b'\in G$ such that $b'\equiv g_1'\modulo{p_1^{a_1}}$ and $b'\equiv g_i\modulo{p_i^{a_i}}$ for $2\leq i\leq k$. Note that $b'\equiv b\modulo{m_2}$, so $b'\in \beta$. However, note that $b'-b\equiv p_1^{b_i}\modulo{p_1^{c_1}}$, so $b'\not\equiv b\modulo{p_1^{c_1}}$. In particular, since $b\in \alpha$, $b'\notin \alpha$. However, since $\gcd(g_1',p_1^{b_1})=\gcd(g_1',p_1^{a_1})$, it follows exactly as in the proof of Theorem \ref{d-value for cyclic} that $\gcd(g,m_2)=\gcd(b',n)$. Then once again,
    $$d_\beta(G)=n-\gcd(g,m_2)=n-\gcd(b',n)=d_{\{b'\}}(G)\leq d_{\beta\backslash\alpha}(G).$$
    Thus, $d_\beta(G)=d_{\beta\backslash\alpha}(G)$.
\end{proof}

The previous two corollaries seem as if they should naturally hold for any abelian group $G$, but the more general results defy simple proof. We include them here as conjectures; we will see later that in cases where these conjectures hold, the main results of this paper may actually be stated much more simply.

\begin{conjecture}
    \label{larger d for generator}
    Let $G$ be a finite abelian group and $H\leq G$. For any $\alpha,\beta\in \quot{G}{H}$ such that $\beta\in\gen{\alpha}$, $d_\alpha(G)\geq d_\beta(G)$.
\end{conjecture}

\begin{conjecture}
    \label{d in subgroups}
    Let $G$ be a finite abelian group and $H_1<H_2\leq G$. For some $g\in G$, we denote $\alpha=g+H_1$ and $\beta=g+H_2$; in particular, note that $\alpha\subsetneq \beta$. Then $d_\beta(G)=d_{\beta\backslash\alpha}(G)$. Equivalently, there exists a $\beta$-sequence $\{g_1,\dots,g_d\}$ of length $d=d_\beta(G)$ with no zero-sum subsequence in $G$ whose sum does not lie in the proper subset $\alpha$.
\end{conjecture}

\begin{remark}
    Recall that given an order $R$ in a number field, $\Cl(\Rbar)\cong \quot{\Cl(R)}{\ker(\tau)}$, where $\tau:\Cl(R)\to\Cl(\Rbar)$ is defined by $\tau([J])=[J\Rbar]$ (where $J$ is an integral ideal of $R$ which is relatively prime to the conductor ideal $I$ of $R$). As we will see throughout this paper, we will often specifically be interested in $D_S(G)$ and $d_S(G)$ when $G=\Cl(R)$ for some order $R$ and $S=\tau^{-1}(\{[J]\})$, the set of all ideal classes in $\Cl(R)$ whose image under $\tau$ is $[J]\in\Cl(\Rbar)$ (that is, given an ideal $J$ in $\Rbar$, we let $J'\in[J]$ be an ideal relatively prime to the conductor ideal $I$ and let $S=[R\cap J']\ker(\tau)$). In this case, we will often use the notation $D_J(\Cl(R))$ and $d_J(\Cl(R))$ as shorthand for $D_S(\Cl(R))$ and $d_S(\Cl(R))$. In particular, when referring to $S=\ker(\tau)$ itself, we will often use $d_{\Rbar}(\Cl(R))$ and $D_{\Rbar}(\Cl(R))$.
\end{remark}


\section{Elasticity of Orders from the $S$-Relative Davenport Constant}
With this understanding of the $S$-relative Davenport constant of a group, we now turn our attention to a common application of the original Davenport constant: determining the elasticity of an order in a number field. Classically, it was proven in \cite{narkiewicz} that when $R$ is a ring of integers in a number field, we may use $D(\Cl(R))$, the Davenport constant of the ideal class group of $R$, to determine $\rho(R)$, the elasticity of $R$. In \cite[Theorem 3.2]{KettingerMoles2025elasticity}, this was extended to determine the elasticity of any order $R$ in a number field whose conductor ideal $I=(R:\Rbar)$ is prime in $\Rbar$. In hindsight, one will see almost immediately that this result can be stated in terms of the $S$-relative Davenport constant.

\begin{proposition}
\label{old result restated}
    Let $R$ be an order in a number field with conductor ideal $P$, where $P$ is prime as an ideal of $\Rbar$. Then:
    \begin{enumerate}
        \item if $P$ is principal in $\Rbar$, then
        $$\rho(R)=\max\left\{\frac{D(\Cl(R))}{2},\frac{D_{\Rbar}(\Cl(R))+1}{2}\right\};$$
        \item otherwise,
        $$\rho(R)=\frac{D(\Cl(R))}{2}.$$
    \end{enumerate}
\end{proposition}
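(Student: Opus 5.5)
The plan is to translate \cite[Theorem 3.2]{KettingerMoles2025elasticity} into the language of Section 2 term by term. No new arithmetic should be needed. Case (2), where $P$ is not principal in $\Rbar$, is already stated there as $\rho(R)=D(\Cl(R))/2$, so nothing is required. All the work is in case (1): identify the extra invariant appearing in the old theorem with $d_{\Rbar}(\Cl(R))$, and then track the additive constant carefully, since that is where an off-by-one error can creep in.

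\textbf{Step 1: identify the invariant.} I would recall how the second term arises in the old proof. Write $P=\pi\Rbar$. The extremal elements there have the shape $x\Rbar=\pi^2J_1\cdots J_d$, where the $J_i$ are prime ideals of $\Rbar$ coprime to $P$. The classes $[R\cap J_i]\in\Cl(R)$ form a sequence with no zero-sum subsequence, and the total class lies in $\ker(\tau)$, which is exactly the condition that $\pi^{-2}x$ generates a principal ideal of $\Rbar$. This one element $x$ has two very different factorizations:
\begin{itemize}
\item It is a product of exactly $2$ atoms, each of the form $\pi\cdot(\text{element})$.
\item It is also a product of $d+2$ atoms. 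Each of the $d$ ideals $(R\cap J_i)\cdot P$ contributes its own atom, and the remaining factors are further atoms taken from $P$.
\end{itemize}
So the old theorem's second term is $(N+2)/2$, where $N$ is the maximal length of a $\ker(\tau)$-sum sequence in $\Cl(R)$ with no zero-sum subsequence. I would check this length count against the statement and proof of \cite[Theorem 3.2]{KettingerMoles2025elasticity}, in particular that the long factorization has length $N+2$ and not $N+1$. By Definition \ref{small} together with the Remark defining the shorthand $d_{\Rbar}$, we have $N=d_{\Rbar}(\Cl(R))$, with the convention $N=0$ when $\ker(\tau)=\{0\}$.

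\textbf{Step 2: convert via $D_S(G)=d_S(G)+1$.} Applying the theorem $D_S(G)=d_S(G)+1$ with $S=\ker(\tau)$ gives
$$\frac{N+2}{2}=\frac{d_{\Rbar}(\Cl(R))+2}{2}=\frac{D_{\Rbar}(\Cl(R))+1}{2},$$
which is exactly the stated expression. I would also check the degenerate case $\ker(\tau)=\{0\}$. There $d=0$ and $D_{\Rbar}(\Cl(R))=1$, so the second term equals $1$. This is harmless inside the maximum, because $\rho(R)\ge 1$ always, and it matches the old theorem's behaviour in that case.

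The main obstacle is Step 1: confirming precisely which quantity \cite{KettingerMoles2025elasticity} uses and the exact factorization lengths attached to it. The factors of $\pi$ are what shift $d$ to $d+2$ rather than $d+1$, and that shift is the entire difference between a correct restatement and one that is off by one. I would settle it first by re-deriving the extremal factorization of $x\Rbar=\pi^2J_1\cdots J_d$ explicitly, and only then apply the algebra of Step 2.
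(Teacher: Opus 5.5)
Your framing matches the paper's. The proposition is \cite[Theorem 3.2]{KettingerMoles2025elasticity} rewritten in the new notation, and case (2) carries over verbatim. Your Step 2 identity $\frac{d_{\Rbar}(\Cl(R))+2}{2}=\frac{D_{\Rbar}(\Cl(R))+1}{2}$ is correct, including the degenerate case $\ker(\tau)=\{0\}$.

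The problem is Step 1, which you yourself single out as the step that fixes the $+2$. The element you describe has elasticity $1$, not $\frac{d+2}{2}$. Suppose $x\Rbar=P^2J_1\cdots J_d$ with $d\geq 1$ and $\{[R\cap J_1],\dots,[R\cap J_d]\}$ zero-sum free. An atom of $R$ dividing $x$ and lying outside $P$ would be a nonunit divisor coprime to $I=P$, and Lemma \ref{factor relatively prime to I} forbids this. So every atom in every factorization of $x$ lies in $P$ and uses up one of the only two copies of $P$ in $x\Rbar$. Since $x=\pi\cdot(\text{generator of }PJ_1\cdots J_d)$, every factorization of $x$ has length exactly $2$. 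The length count fails as well. A factorization of $x$ into $d+2$ atoms, with only $d+2$ prime ideal factors available, would make each $J_i=\gamma_i\Rbar$ with $\gamma_i\in R$. Then $R\cap J_i=\gamma_iR$ by Lemma \ref{go up or go down}, so $[R\cap J_i]=0$, a contradiction. Moreover, the $d$ atoms you attach to the ideals $(R\cap J_i)P$ would all lie in $P$, requiring $d$ copies of $P$.

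The correct extremal element is exactly what the proof of Theorem \ref{elasticity of principal order} constructs when $a=1$. In that case only $i=0$ occurs, $j=1$, and $d_P(\Cl(R))=d_{\Rbar}(\Cl(R))$ because $P$ is principal. The construction pairs each non-$P$ prime with a partner of inverse class:
\begin{itemize}
\item Take primes $Q_i$ of $\Rbar$ coprime to $P$ such that $[R\cap Q_1],\dots,[R\cap Q_d]$ is zero-sum free with sum in $\ker(\tau)$, where $d=d_{\Rbar}(\Cl(R))$.
\item Choose primes $Q_i'$ with $[R\cap Q_i']=-[R\cap Q_i]$.
\item Let $\beta,\gamma\in P\subseteq R$ generate $PQ_1\cdots Q_d$ and $PQ_1'\cdots Q_d'$ respectively; both are atoms.
\item Then $\beta\gamma=(u\pi)\,\pi\,\pi_1\cdots\pi_d$ with $u\in U(\Rbar)$ and $\pi_i\in\Irr(R)$ generating $Q_iQ_i'$.
\end{itemize}
This gives factorization lengths $2$ and $d+2$ for an element with $2d+2$ prime ideal factors. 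So your instinct that the two copies of $\pi$ account for the $+2$ is right, but only after doubling the non-$P$ primes.

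The hypotheses of Theorem \ref{elasticity of principal order} hold here, since $R\cap P=P=I$ makes $I$ prime in $R$. Its proof therefore also supplies the matching upper bound. This lets you verify case (1) inside the paper rather than reconstructing the exact invariant used in \cite{KettingerMoles2025elasticity}. That is precisely the remark the paper makes right after that theorem.
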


In this section, we will find a natural generalization of part (1) of this result. In particular, we will determine the elasticity of any order $R$ in a number field $K$ whose conductor ideal $I$ is principal as an ideal of $\Rbar$ and prime as an ideal of $R$ (note that this covers but does not imply the case when $I$ is prime in $\Rbar$). To see how to handle this, we first produce a characterization of when $I$ is prime in $R$ that will help us partition this problem into two pieces.

\begin{proposition}
    \label{prime ideal of R}
    Let $R$ be an order in a number field with conductor ideal $I$. The following are equivalent:
    \begin{enumerate}
        \item $I$ is a prime ideal of $R$;
        \item $I=R\cap P$ for every prime ideal $P$ of $\Rbar$ containing $I$;
        \item $I=R\cap P$ for some prime ideal $P$ of $\Rbar$.
    \end{enumerate}
\end{proposition}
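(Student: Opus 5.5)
I would prove the cycle of implications $(1)\Rightarrow(2)\Rightarrow(3)\Rightarrow(1)$. The basic tool is that for any prime ideal $P$ of $\Rbar$, the contraction $R\cap P$ is a prime ideal of $R$: it is the preimage of $P$ under the inclusion $R\hookrightarrow\Rbar$, and it is proper because $1\notin P$. I also use that $I$ is an ideal of both $R$ and $\Rbar$, which holds by the definition of the conductor. In particular $I\subseteq R$, so $I=R\cap I$.

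\textbf{$(3)\Rightarrow(1)$.} This is immediate: if $I=R\cap P$ for a prime $P$ of $\Rbar$, then $I$ is a contraction of a prime ideal, hence prime in $R$.

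\textbf{$(2)\Rightarrow(3)$.} Here I only need that some prime $P$ of $\Rbar$ contains $I$. Since $R$ is an order, $\Rbar$ is a finitely generated $R$-module, so $I\neq 0$. Also $I\neq \Rbar$; otherwise $1\in I\subseteq R$ would give $\Rbar\subseteq R$, which is excluded when $R$ is a non-maximal order. If $R=\Rbar$ is allowed, then $I=R$ and statement (1) fails, so the equivalence still needs care there; I would note that the paper's standing assumption is that $R$ is non-maximal. Given that $I$ is a proper ideal of $\Rbar$, it lies in some maximal ideal $P$, and (2) then gives (3).

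\textbf{$(1)\Rightarrow(2)$.} This is the substantive step. Suppose $I$ is prime in $R$, and let $P$ be a prime of $\Rbar$ with $I\subseteq P$. Then $I\subseteq R\cap P$, and I need the reverse inclusion. The approach is to pass to the finite ring $\Rbar/I$, which contains the subring $R/I$. Since $I$ is prime in $R$, the ring $R/I$ is a finite integral domain, hence a field, so $I$ is maximal in $R$. The contraction $R\cap P$ is a prime ideal of $R$ containing $I$, and it is proper. Because $I$ is maximal, $R\cap P=I$. Alternatively, integrality of $\Rbar$ over $R$ and lying-over or incomparability give the same conclusion, but the maximality argument is shorter.

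The only real obstacle is the finiteness of $R/I$, which follows because $I$ is a nonzero ideal in an order, so $R/I$ is finite. With that in place, every step is routine.
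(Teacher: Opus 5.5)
Your proof is correct and follows the paper's: the key step $(1)\Rightarrow(2)$ is exactly the observation that a nonzero prime of the one-dimensional ring $R$ is maximal, so the contraction $R\cap P\supseteq I$ must equal $I$ (you get maximality from $R/I$ being a finite domain, the paper from $\dim(R)=1$). Your remark that $(2)\Rightarrow(3)$ needs $I\neq\Rbar$, i.e.\ $R\neq\Rbar$, is a legitimate point the paper's ``immediate'' passes over: when $R=\Rbar$, statement (2) holds vacuously while (1) and (3) both fail.
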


\begin{proof}
    Assume that $I$ is a prime ideal of $R$, and let $P$ be a prime ideal of $\Rbar$ containing $I$. Then $R\cap P$ is a prime ideal of $R$ containing $I$. Since $\dim(R)=1$ and $I\neq\{0\}$, then $R\cap P=I$. Thus, $(1)\implies (2)$. The implications $(2)\implies (3)$ and $(3)\implies (1)$ are immediate.
\end{proof}

In order to quickly handle the case when $I$ is a non-primary ideal of $\Rbar$, we defer to the following result of Halter-Koch.

\begin{proposition}
    \label{infinite elasticity}
    \cite[Corollary 4]{halter1995elasticity}
    Let $R$ be an order in an algebraic number field, $\Rbar$ its integral closure, and $I$ its conductor ideal.
    \begin{enumerate}
        \item If for some prime ideal $P$ of $R$ there is more than one prime ideal of $\Rbar$ lying over $P$, then $\rho(R)=\infty$.
        \item If for every prime ideal $P$ of $R$ there is exactly one prime ideal of $\Rbar$ lying over $P$, then $R$ has accepted (finite) elasticity.
    \end{enumerate}
    Note that by \cite[Theorem 3.2]{subringrelations}, it is sufficient to check only the prime ideals containing $I$.
\end{proposition}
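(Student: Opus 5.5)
The natural approach is to reduce both parts to the local monoids $R_P^\bullet$ at the finitely many primes $P$ of $R$ containing the conductor $I$. The global arithmetic of $R$ is then controlled by $\Cl(R)$ together with these local pieces, in the spirit of a transfer to a $T$-block monoid. By \cite[Theorem 3.2]{subringrelations}, as noted after the statement, only primes $P\supseteq I$ need attention, since $R_P=\Rbar_P$ is a DVR for every other $P$. I will use the standard value-monoid description of each $R_P^\bullet$. If $Q_1,\dots,Q_s$ are the primes of $\Rbar$ over $P$, then $\Rbar_P$ is semilocal with exactly these maximal ideals, and $R_P^\bullet$ modulo units embeds, up to a finite unit group, into $\bN^s$ via $x\mapsto (v_{Q_1}(x),\dots,v_{Q_s}(x))$. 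Because a power of $P\Rbar_P$ lies in the conductor, this image contains a translated cone $c+\bN^s$.

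For part (1), suppose $s\geq 2$ and write $Q_1,Q_2$ for two of the primes over $P$. I will first build elements with prescribed valuation vectors. Let $h$ be the class number of $\Rbar$. Take generators of $Q_1^h$ and $Q_2^h$, and multiply by suitable elements so that all other components of the resulting elements are trivial. Using $\Cl(R)$ and further $h$-th powers if necessary, I will ensure that the parts coprime to $I$ factor as principal ideals of $R$ with a bounded number of atoms. The key local observation concerns the value monoid $V\subseteq \bN^2$ (projecting to the $Q_1,Q_2$ coordinates). Since $R_P\neq\Rbar_P$, the set $V$ misses points on or near the coordinate axes, yet contains $c+\bN^2$. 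I expect to show the following:
\begin{enumerate}
\item For suitable fixed $c_1$ and all large $M$, the elements with values $(M,c_1)$ and $(c_1,M)$ are atoms, because any decomposition would force a summand with too small a second (respectively first) coordinate to lie in $V$.
\item Their product, with value $(M+c_1,M+c_1)$, also equals roughly $(M+c_1)/c$ copies of an element of value $(c,c)$, up to a bounded correction.
\end{enumerate}
This yields elasticity at least of order $M/c$, which is unbounded. Lifting these local elements to global elements of $R$ whose remaining factorization has bounded length then gives $\rho(R)=\infty$.

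For part (2), suppose $s=1$ at every $P\supseteq I$. Then each $R_P^\bullet$ is a finitely primary monoid of rank $1$, so its value monoid is a numerical-type submonoid of $\bN$ containing $c+\bN$. Such a monoid has finite elasticity, and in fact every atom has $Q$-valuation bounded by a constant $K_P$. Globally, I will factor $x\in R$ as an ideal of $\Rbar$, split it into its parts at the primes $Q\mid I$ and its part coprime to $I$, and compare the length of any factorization into atoms with the total valuation. Each atom contributes at least $1$ and at most a bounded amount, with the bound depending on $D(\Cl(R))$ and the constants $K_P$. The ratio of any two lengths is therefore bounded, so $\rho(R)<\infty$; with more care one expects the supremum to be accepted.

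I expect the main obstacle to be the globalization in (1): producing elements of $R$ whose valuation vectors at $Q_1,Q_2$ are exactly the prescribed ones while their contribution away from $P$ stays bounded in factorization length. To achieve this I would first try multiplying by $h$-th powers and exploiting that $\Cl(R)$ is finite, so that every auxiliary factor becomes principal with uniformly bounded atom length.
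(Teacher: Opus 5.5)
The paper gives no proof of this statement; it quotes Halter-Koch's Corollary 4. That argument rests on the same structure you invoke: the localizations $R_P^\bullet$ are finitely primary of rank equal to the number of primes of $\Rbar$ over $P$, and $R^\bullet$ transfers to a $T$-block monoid over $\Cl(R)$. Judged as a proof, however, your outline has two genuine gaps.

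\textbf{Acceptance in part (2).} The statement asserts that the elasticity is \emph{accepted}, and you stop at finiteness (``one expects the supremum to be accepted''). Knowing that every atom of $R$ has between $1$ and $K$ prime ideal factors in $\Rbar$ gives $\rho(R)\le K$, but it says nothing about whether the supremum is attained. A further finiteness argument is required. One way to supply it:
\begin{enumerate}
\item $T=\prod_{P\supseteq I}(R_P^\bullet)_{\mathrm{red}}$ is finitely generated, because $\Rbar_P^\times/R_P^\times$ is finite and local atoms have valuation below twice the local conductor exponent.
\item Atoms of $\mathcal{B}(\Cl(R),T,\iota)$ involve at most $D(\Cl(R))$ pieces, so this block monoid is finitely generated.
\item The transfer $R^\bullet\to\mathcal{B}(\Cl(R),T,\iota)$ exists because every class contains primes coprime to $I$, and it preserves sets of lengths.
\item In a finitely generated cancellative reduced monoid, the pairs of factorizations of a common element form a submonoid of some $\bN^{2n}$. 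It is closed under differences of comparable elements, so by Dickson's lemma it is generated by finitely many minimal pairs $(x_j,y_j)$. The mediant inequality then gives $\rho=\max_j |y_j|/|x_j|$, which is attained.
\end{enumerate}
Relatedly, you only assert the passage from the local constants $K_P$ to a bound on \emph{global} atoms. It needs the following observation. A local factorization $u=a_1\cdots a_k$ in $R_P$ gives $uR_P\cap R=\prod_i(a_iR_P\cap R)$ with invertible $P$-primary factors. A proper nonempty zero-sum subsequence of the classes in $\Cl(R)$ of all these ideals, together with the primes of $uR$ coprime to $I$, yields a proper factorization of $u$ in $R$, since the complementary product is an integral invertible ideal.

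\textbf{Globalization in part (1).} You leave this open, although it is where the content lies. Your step (a) is also stated too strongly: for a fixed $c_1$, the value $(M,c_1)$ need not be attained, let alone by an atom, for all large $M$. Atomicity is not needed. Since $Q_i\cap R=P$ for all $i$, every nonunit of $R$ in $P$ lies in every $Q_i$. Every nonunit of $R$ outside $P$ has a prime factor in $\Rbar$ other than the $Q_i$. Hence any factorization of $x\in R$ has length at most $v_{Q_j}(x)$ (for any $j$) plus the number of prime factors of $x\Rbar$ outside $\{Q_1,\dots,Q_s\}$. The globalization then goes as follows.
\begin{enumerate}
\item Fix $0\neq\eta\in I$ and generators $\gamma_i$ of $Q_i^h$. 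Put $x_M=\eta(\gamma_1\gamma_3\cdots\gamma_s)^M$ and $y_M=\eta(\gamma_2\gamma_3\cdots\gamma_s)^M$. These lie in $R$ because $\eta\Rbar\subseteq R$. All their factorization lengths are bounded by a constant $C$ independent of $M$ (use $v_{Q_2}$ for $x_M$ and $v_{Q_1}$ for $y_M$).
\item For $\delta=\gamma_1\cdots\gamma_s$, some power $\delta^N$ lies in $R$. At $P$, a high power lies in $I\Rbar_P\subseteq R_P$. At every other $P'\supseteq I$, $\delta$ is a unit of $\Rbar_{P'}$ and $\Rbar_{P'}^\times/R_{P'}^\times$ is finite. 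At all remaining primes, $R_{P'}=\Rbar_{P'}$.
\item Then $x_{kN}y_{kN}=\eta\cdot\eta(\gamma_3\cdots\gamma_s)^{kN}\cdot(\delta^N)^k$ is a product of $k+2$ nonunits of $R$. Hence $\rho(R)\ge (k+2)/(2C)\to\infty$.
\end{enumerate}
The factor $\gamma_3\cdots\gamma_s$ is essential when $s\ge 3$, which is why your projection to two coordinates does not suffice. No power of $\gamma_1\gamma_2$ can lie in $R$, since it would lie in $Q_1\cap R=P\subseteq Q_3$.
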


\begin{corollary}
    \label{nonprimary conductor ideal}
    Let $R$ be an order in a number field with conductor ideal $I$. If $I$ is a prime ideal of $R$ such that $I$ is a non-primary ideal of $\Rbar$, then $R$ has infinite elasticity.
\end{corollary}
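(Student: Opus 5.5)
We reduce the corollary to part (1) of Proposition \ref{infinite elasticity}. The task is to exhibit a prime ideal of $R$ over which more than one prime ideal of $\Rbar$ lies. The natural candidate is $I$ itself, since by hypothesis it is a prime ideal of $R$.

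First we use that $I$ is a nonzero ideal of $\Rbar$ (the conductor is an $\Rbar$-ideal) and that $\Rbar$ is a Dedekind domain. Write $I=P_1^{e_1}\cdots P_k^{e_k}$ with the $P_i$ distinct prime ideals of $\Rbar$ and $e_i\geq 1$. In a Dedekind domain the primary ideals are exactly the prime powers. So the assumption that $I$ is non-primary in $\Rbar$ forces $k\geq 2$; in particular, at least two distinct primes $P_1\neq P_2$ of $\Rbar$ contain $I$.

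Next we apply Proposition \ref{prime ideal of R}. Since $I$ is prime in $R$, implication $(1)\implies(2)$ gives $R\cap P_1=I=R\cap P_2$. Thus both $P_1$ and $P_2$ lie over the prime ideal $I$ of $R$, and part (1) of Proposition \ref{infinite elasticity} gives $\rho(R)=\infty$.

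No step here is a genuine obstacle. The only point needing care is the identification of primary ideals in the Dedekind domain $\Rbar$ with prime powers, which is standard: if $I=Q^e$ then $\sqrt{I}=Q$ is prime, and conversely an ideal with two distinct prime factors has a radical $P_1\cap\cdots\cap P_k$ that is not prime, so the ideal cannot be primary.
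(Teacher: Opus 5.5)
Your proposal is correct and follows the paper's own argument. Non-primarity in the Dedekind domain $\Rbar$ yields two distinct primes of $\Rbar$ containing $I$; Proposition \ref{prime ideal of R} shows both contract to $I$; and part (1) of Proposition \ref{infinite elasticity} then gives $\rho(R)=\infty$. Your check that nonzero primary ideals of $\Rbar$ are exactly the prime powers simply spells out a step the paper leaves implicit.
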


\begin{proof}
    Assume that $I$ is a prime ideal of $R$ which is a non-primary ideal of $\Rbar$. Then there exist distinct prime ideals $P$ and $Q$ of $\Rbar$ which contain $I$; it follows immediately from Propositions \ref{prime ideal of R} and \ref{infinite elasticity} that $R$ has infinite elasticity.
\end{proof}

We now turn our attention to the case when $I$ is a primary ideal of $\Rbar$ which is prime in $R$. By Proposition \ref{prime ideal of R}, this is equivalent to $I$ being a power of a prime ideal $P$ of $\Rbar$ with $I=R\cap P$. As mentioned previously, we will be aiming to generalize part (1) of Proposition \ref{old result restated}; that is, we will also be assuming that $I$ is a principal ideal of $\Rbar$. Before determining the elasticity of an order with such a conductor, we present lemmata that will be useful throughout this section and those that follow. The first three lemmata ensure that prime factorization of ideals in $R$ relatively prime to the conductor behaves very similarly to prime factorization of ideals in $\Rbar$.

\begin{lemma}\cite[Theorem 3.6]{conrad}
    \label{comax to I means invertible}
    Let $R$ be an order in a number field with conductor ideal $I$. If $J$ is an ideal of $R$ which is relatively prime to $I$, then $J$ is an invertible ideal of $R$.
\end{lemma}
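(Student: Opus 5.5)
The plan is to show that $J$ is locally principal and then use the standard fact that, for an ideal of a Noetherian domain, being locally principal at every maximal ideal is equivalent to being invertible. Equivalently, we want $JJ^{-1}=R$, where $J^{-1}=(R:J)$. Since $JJ^{-1}$ is an ideal of $R$ contained in $R$, it suffices to show $JJ^{-1}\not\subseteq \mathfrak m$ for every maximal ideal $\mathfrak m$ of $R$. Localization commutes with products and colons of finitely generated ideals, so this reduces to showing that $J_{\mathfrak m}$ is invertible in $R_{\mathfrak m}$ for each $\mathfrak m$.

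Fix a maximal ideal $\mathfrak m$ and split into two cases according to whether $\mathfrak m$ contains $I$.

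\emph{Case $I\not\subseteq\mathfrak m$.} Pick $c\in I\setminus\mathfrak m$; then $c$ is a unit in $R_{\mathfrak m}$. Because $c\Rbar\subseteq I\subseteq R$, we get $\Rbar\subseteq c^{-1}R\subseteq R_{\mathfrak m}$, so $R_{\mathfrak m}=\Rbar_{\mathfrak m}$. The latter is a localization of a Dedekind domain, hence a DVR or field. In particular every nonzero ideal of $R_{\mathfrak m}$, including $J_{\mathfrak m}$, is principal and therefore invertible.

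\emph{Case $I\subseteq\mathfrak m$.} Here we use the hypothesis: $J+I=R$. Since $I\subseteq\mathfrak m$ and $J+I\not\subseteq\mathfrak m$, it follows that $J\not\subseteq\mathfrak m$. Hence $J_{\mathfrak m}=R_{\mathfrak m}$, which is trivially principal.

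Combining the two cases, $J$ is locally principal at every maximal ideal. Since $R$ is Noetherian and $J$ is finitely generated, $(JJ^{-1})_{\mathfrak m}=J_{\mathfrak m}(J_{\mathfrak m})^{-1}=R_{\mathfrak m}$ for all $\mathfrak m$, and so $JJ^{-1}=R$. The only point needing care is the identity $R_{\mathfrak m}=\Rbar_{\mathfrak m}$ away from the conductor. As shown above, this follows directly from the definition of $I$ as the largest $\Rbar$-ideal contained in $R$, so I do not expect a real obstacle.
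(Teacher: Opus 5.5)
The paper does not prove this lemma; it quotes it from Conrad's notes. Your proof is correct and complete. The reduction of $JJ^{-1}=R$ to local statements is valid because $J$ is finitely generated. The inclusion $c\Rbar\subseteq I\subseteq R$ with $c\notin\mathfrak m$ does give $R_{\mathfrak m}=\Rbar_{\mathfrak m}$, which is local, Dedekind and not a field, hence a DVR. At maximal ideals containing $I$, the hypothesis $J+I=R$ forces $J_{\mathfrak m}=R_{\mathfrak m}$. (You tacitly use $J\neq 0$. This is automatic when $R\neq\Rbar$, since then $I\neq R$. When $R=\Rbar$ the lemma must be read for nonzero ideals anyway.)

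For comparison, a shorter global argument avoids localization. Let $\mathfrak b$ be the inverse of $J\Rbar$ in the Dedekind domain $\Rbar$, and put $L=R+I\mathfrak b$. Since $I\mathfrak b$ is an $\Rbar$-module, $J\cdot I\mathfrak b=(J\Rbar)\mathfrak b\, I=I$, so $JL=J+I=R$. This exhibits the inverse explicitly and uses only that $I$ is an $\Rbar$-ideal contained in $R$. Your local route needs a little more machinery, namely the local characterization of invertibility. In exchange it yields the structural fact that $R_{\mathfrak m}=\Rbar_{\mathfrak m}$ is a DVR at every maximal ideal not containing $I$. That fact is the natural tool for the subsequent lemmas on unique factorization of ideals prime to $I$ and on the correspondence $J\mapsto J\Rbar$, $J\mapsto R\cap J$.
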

 
\begin{lemma}\cite[Corollary 3.11]{conrad}
    \label{comax to I factors in R}
    Let $R$ be an order in a number field with conductor ideal $I$. Any ideal of $R$ which is relatively prime to $I$ has unique factorization into prime ideals of $R$ relatively prime to $I$. Moreover, all but finitely many prime ideals in $R$ are relatively prime to $I$.
\end{lemma}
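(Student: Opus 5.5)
The plan is to pass to the Dedekind domain $\Rbar$ and pull the factorization back to $R$ through the standard correspondence between ideals prime to the conductor. Throughout, $J$ is an ideal of $R$ with $J+I=R$.

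First we show that $J\Rbar$ is relatively prime to $I$ in $\Rbar$. Since $J+I=R$, we have $J\Rbar+I\Rbar=\Rbar$, and $I\Rbar=I$ because $I$ is an $\Rbar$-ideal. In the Dedekind domain $\Rbar$, we then factor $J\Rbar=Q_1\cdots Q_r$ into primes $Q_i$, each comaximal with $I$.

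Next we establish the correspondence. For a prime $Q$ of $\Rbar$ with $Q+I=\Rbar$, set $\mathfrak q=Q\cap R$; this is a prime of $R$ with $\mathfrak q+I=R$. Indeed, if $\mathfrak q\supseteq I$, then $Q\supseteq I$, a contradiction. Conversely, for an ideal $\mathfrak a$ of $R$ comaximal with $I$, we want $\mathfrak a\Rbar\cap R=\mathfrak a$. We argue as follows. Write $1=a+i$ with $a\in\mathfrak a$ and $i\in I$. Then
$$\mathfrak a\Rbar\cap R=(\mathfrak a\Rbar\cap R)(\mathfrak a+I)\subseteq \mathfrak a+\mathfrak a\Rbar I\subseteq \mathfrak a+\mathfrak a I\subseteq\mathfrak a,$$
using $\Rbar I=I\subseteq R$. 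We also need multiplicativity, $(\mathfrak a\mathfrak b)\Rbar=\mathfrak a\Rbar\,\mathfrak b\Rbar$, which is trivial.

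With these in hand, existence follows. We have $J=J\Rbar\cap R=(Q_1\cdots Q_r)\cap R$, and we claim this equals $\mathfrak q_1\cdots\mathfrak q_r$ with $\mathfrak q_i=Q_i\cap R$. Note that $\mathfrak q_i\Rbar=Q_i$: the ideal $\mathfrak q_i\Rbar$ is contained in $Q_i$ and is comaximal with $I$, and localizing at $Q_i$, where $R_{\mathfrak q_i}=\Rbar_{Q_i}$ since $\mathfrak q_i\not\supseteq I$, shows equality. Then
$$(\mathfrak q_1\cdots\mathfrak q_r)\Rbar\cap R=Q_1\cdots Q_r\cap R=J,$$
and the left side equals $\mathfrak q_1\cdots\mathfrak q_r$ by the contraction identity, since the product is comaximal with $I$.

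For uniqueness, suppose $J=\mathfrak p_1\cdots\mathfrak p_s$ with primes $\mathfrak p_j$ comaximal with $I$. Extending gives $J\Rbar=\prod\mathfrak p_j\Rbar$. Each $\mathfrak p_j\Rbar$ is prime, because $\mathfrak p_j\Rbar\cap R=\mathfrak p_j$ and localization identifies $\mathfrak p_j\Rbar$ with the unique prime over $\mathfrak p_j$. Unique factorization in $\Rbar$ then forces the $\mathfrak p_j\Rbar$ to be the $Q_i$ up to order, and contracting recovers $\mathfrak p_j=\mathfrak q_{\sigma(j)}$.

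For the finiteness claim, a prime $\mathfrak p$ of $R$ fails to be comaximal with $I$ exactly when $\mathfrak p\supseteq I$, since $R$ has dimension one. Such primes correspond to primes of the finite ring $R/I$, of which there are finitely many.

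The main obstacle is the localization step identifying $R_{\mathfrak q}=\Rbar_{\mathfrak q}$ when $\mathfrak q\not\supseteq I$. We get this by picking $c\in I\setminus\mathfrak q$; then $c\Rbar\subseteq R$, so $\Rbar\subseteq c^{-1}R\subseteq R_{\mathfrak q}$. This shows there is a unique prime of $\Rbar$ over $\mathfrak q$, and that prime is $\mathfrak q\Rbar$.
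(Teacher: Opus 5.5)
Your proof is correct and takes essentially the standard route. The paper gives no proof of its own; it imports the result from Conrad, where it comes from transferring unique factorization in $\Rbar$ through the extension--contraction correspondence $\mathfrak a\mapsto\mathfrak a\Rbar$, $\mathfrak b\mapsto\mathfrak b\cap R$ between ideals prime to $I$ (the facts the paper records as Lemma~\ref{go up or go down}), and that is exactly what you do. The only difference is in how you show that $\mathfrak p\Rbar$ is prime and that $(Q\cap R)\Rbar=Q$: you localize via $\Rbar\subseteq R_{\mathfrak q}$, and there it is the uniqueness of the prime over $\mathfrak q_i$, not comaximality with $I$, that rules out other prime factors of $\mathfrak q_i\Rbar$; an elementary alternative is that $\Rbar=\mathfrak p\Rbar+I\subseteq\mathfrak p\Rbar+R$ gives $\Rbar/\mathfrak p\Rbar\cong R/\mathfrak p$.
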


\begin{lemma}\cite[Theorem 2.6]{picavet}
    \label{primes in ideal classes}
    Let $R$ be an order in a number field. Then every ideal class in $\Cl(R)$ contains infinitely many prime ideals of $R$.
\end{lemma}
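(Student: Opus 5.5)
The plan is to reduce the statement to the classical generalized Dirichlet theorem for ray class groups of $\Rbar$, which is a consequence of the Chebotarev density theorem. I will not argue inside $R$ directly. The reduction has three steps: identify $\Cl(R)$ with a quotient of a ray class group of $\Rbar$ modulo the conductor $I$; apply the density theorem in $\Rbar$; and pull the resulting primes back to $R$.

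\emph{Step 1: every class has a representative coprime to $I$.} Let $J$ be an invertible ideal of $R$. It is locally principal, and only finitely many primes of $R$ contain $I$. By the Chinese remainder theorem applied at these finitely many maximal ideals, I can choose $x\in K^\times$ such that $xJ$ is integral and coprime to $I$. Hence every class in $\Cl(R)$ is represented by an ideal coprime to $I$. By Lemmas \ref{comax to I means invertible} and \ref{comax to I factors in R}, the monoid of such ideals is free on the primes of $R$ not containing $I$.

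\emph{Step 2: $\Cl(R)$ is a quotient of a ray class group.} For primes coprime to the conductor, the maps $Q\mapsto Q\cap R$ and $\mathfrak p\mapsto \mathfrak p\Rbar$ are mutually inverse bijections between primes of $\Rbar$ coprime to $I$ and primes of $R$ coprime to $I$. This holds because $R_{\mathfrak p}=\Rbar_{\mathfrak p}$ whenever $I\not\subseteq\mathfrak p$. Extending multiplicatively gives an isomorphism between the group of fractional $R$-ideals coprime to $I$ and the group $\mathcal I_K(I)$ of fractional $\Rbar$-ideals coprime to $I$.

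Under this isomorphism, the principal ideals $\alpha R$ with $\alpha$ a unit at the primes dividing $I$ correspond to the $\alpha\Rbar$ with $\alpha$ lying in $R_{\mathfrak p}^\times$ for each such prime. This subgroup contains the ray group $P_{K,1}(I)=\{\alpha\Rbar : \alpha\equiv 1 \bmod^\times I\}$, because $1+I\subseteq R$. Together with Step 1, this yields a surjection
$$\mathcal I_K(I)/P_{K,1}(I)\twoheadrightarrow \Cl(R),\qquad Q\mapsto [Q\cap R].$$
I expect this step to be the main obstacle. The delicate points are checking that principal $R$-ideals coprime to $I$ are exactly the ones described, and that the map is well defined on classes. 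These reduce to local computations at the finitely many primes containing $I$.

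\emph{Step 3: apply the density theorem and pull back.} Fix a class $c\in\Cl(R)$ and choose a ray class $\gamma$ mapping onto it. By the generalized Dirichlet theorem, $\gamma$ contains infinitely many prime ideals $Q$ of $\Rbar$, in fact a set of Dirichlet density $1/h_I>0$, where $h_I$ is the ray class number. Discarding the finitely many primes dividing $I$, each remaining $Q$ gives a prime $Q\cap R$ of $R$ in the class $c$. Since the correspondence of Step 2 is injective on primes coprime to $I$, distinct $Q$ give distinct primes $Q\cap R$. Therefore $c$ contains infinitely many prime ideals of $R$.
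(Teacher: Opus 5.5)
Your proof is correct. There is nothing in the paper to match it against: the paper does not prove this lemma, it imports it from \cite[Theorem 2.6]{picavet}. What you give is the standard self-contained argument. You realize $\Cl(R)$ as a quotient of the ray class group of $\Rbar$ modulo the conductor, using Step 1 for surjectivity, and then invoke the generalized Dirichlet theorem.

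The one step you compress is that $P_{K,1}(I)$ maps to principal classes. It goes through once stated locally. Take $\alpha\equiv 1 \bmod^\times I$ and a prime $\mathfrak p\supseteq I$ of $R$. Every prime of $\Rbar$ over $\mathfrak p$ divides $I$, so $\alpha-1\in I\Rbar_{\mathfrak p}=IR_{\mathfrak p}\subseteq \mathfrak pR_{\mathfrak p}$; this is where $I\subseteq R$ is really used. Hence $\alpha\in 1+\mathfrak pR_{\mathfrak p}\subseteq R_{\mathfrak p}^\times$. So $\alpha R$ is a principal invertible $R$-ideal coprime to $I$ with $(\alpha R)\Rbar=\alpha\Rbar$.

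Your route costs class field theory as input, where the paper spends only a citation, but it buys two things:
\begin{enumerate}
\item The primes it produces are visibly coprime to $I$. That is exactly the form in which the lemma is used in Lemmas \ref{elasticity relatively prime to I} and \ref{prime factors of irreducible}.
\item It gives equidistribution. All fibres of the surjection $\mathcal I_K(I)/P_{K,1}(I)\to\Cl(R)$ have the same size $h_I/\abs{\Cl(R)}$. The bijection $Q\mapsto Q\cap R$ preserves norms. So each class of $\Cl(R)$ contains a set of primes of Dirichlet density $1/\abs{\Cl(R)}$, not merely infinitely many.
\end{enumerate}
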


The next two lemmata will help us relate ideals of $\Rbar$ relatively prime to the conductor to the ideals of $R$ relatively prime to the conductor. In particular, there is a one-to-one correspondence between these ideals, and this correspondence respects primality and prime factorization.

\begin{lemma}
    \label{go up or go down}\cite[Lemma 2.6]{KettingerMoles2025elasticity}
    Let $R$ be an order in a number field with conductor ideal $I$. If $J$ is an ideal of $R$ comaximal to $I$, then $J\Rbar\cap R=J$; if $J$ is prime, then $J\Rbar$ is prime as well. 
    
    Moreover, if $J$ is an $\Rbar$-ideal comaximal to $I$, then $(R\cap J)\Rbar=J$; if $J$ is prime, then $R\cap J$ is prime as well.
\end{lemma}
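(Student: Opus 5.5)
The plan is to prove the two halves of the statement separately, each using the conductor's two key properties: $I$ is an ideal of both $R$ and $\Rbar$, and comaximality transfers between the two rings. For the first half, let $J$ be an ideal of $R$ with $J+I=R$.

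First I will show that $J\Rbar$ is comaximal to $I$ in $\Rbar$. Indeed, $J\Rbar+I\supseteq (J+I)\Rbar=R\Rbar=\Rbar$.

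Next I will show $J\Rbar\cap R=J$. The inclusion $J\subseteq J\Rbar\cap R$ is clear. For the reverse, take $x\in J\Rbar\cap R$ and write
$$x=x\cdot 1\in x(J+I)=xJ+xI.$$
Since $x\in R$, we have $xJ\subseteq J$. Since $x\in J\Rbar$ and $I\Rbar=I\subseteq R$, we get $xI\subseteq J\Rbar I=J(\Rbar I)=JI\subseteq J$. Hence $x\in J$.

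For primality, suppose $J$ is prime and comaximal to $I$. Then $J\Rbar$ is a proper ideal: otherwise $J=J\Rbar\cap R=R$. I would then avoid elementwise manipulation in $\Rbar$ and argue via lying over. Since $\Rbar$ is integral over $R$, there is a prime $Q$ of $\Rbar$ with $Q\cap R=J$, and necessarily $J\Rbar\subseteq Q$. The key claim is $Q\subseteq J\Rbar$. Take $q\in Q$ and write $1=j+i$ with $j\in J$ and $i\in I$. Then
$$q=qj+qi.$$
Here $qj\in J\Rbar$, and $qi\in Q\cap I\subseteq Q\cap R=J$, so $qi\in J\subseteq J\Rbar$. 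Thus $Q=J\Rbar$, and $J\Rbar$ is prime.

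For the second half, let $J$ be an ideal of $\Rbar$ with $J+I=\Rbar$.

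First I will show that $R\cap J$ is comaximal to $I$ in $R$. Write $1=j+i$ with $j\in J$ and $i\in I$. Then $j=1-i\in R$, since $I\subseteq R$. So $j\in R\cap J$, and therefore $1\in (R\cap J)+I$.

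Next I will show $(R\cap J)\Rbar=J$. The inclusion $\subseteq$ is clear. For $\supseteq$, take $y\in J$ and write
$$y=yj+yi.$$
Here $yj\in j\Rbar\subseteq (R\cap J)\Rbar$, because $j\in R\cap J$. Also $yi\in I\cap J\subseteq R\cap J$, because $I$ is an $\Rbar$-ideal contained in $R$.

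Finally, if $J$ is a prime of $\Rbar$, then $R\cap J$ is prime by contraction. It is proper since $1\notin J$.

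The only step that needs care is the primality of $J\Rbar$. The lying-over argument above is the route I expect to work, and the rest is the routine "write $1=j+i$" trick applied repeatedly.
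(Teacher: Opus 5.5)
Your proof is correct and complete. In particular, the following steps all check out:
\begin{itemize}
\item the transfer of comaximality in both directions;
\item the identity $J\Rbar\cap R=J$, obtained from $x\in xJ+xI\subseteq J+JI$;
\item the identity $(R\cap J)\Rbar=J$, obtained from $y=yj+yi$;
\item the lying-over argument that the prime $Q$ over $J$ coincides with $J\Rbar$, where $qi\in Q\cap I\subseteq J$ holds because $I$ is an $\Rbar$-ideal contained in $R$.
\end{itemize}

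The paper does not prove this lemma itself; it imports it from the authors' earlier work. So there is no in-paper argument to compare against. Your ``write $1=j+i$'' technique is the standard one for ideals coprime to the conductor.

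One optional simplification for the primality of $J\Rbar$ avoids lying over. The natural map $R/J\to\Rbar/J\Rbar$ is injective, since $J\Rbar\cap R=J$. It is also surjective, since $\Rbar=J\Rbar+I\subseteq J\Rbar+R$. Hence $\Rbar/J\Rbar\cong R/J$ is a domain. This uses only facts you have already established, and it shows in addition that the residue rings of $J$ and $J\Rbar$ agree.
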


\begin{lemma}
    \label{prime factors in R}
    Let $R$ be an order in a number field with conductor ideal $I$. If $A$ is an ideal of $R$ relatively prime to $I$ with prime ideal factorization $A=Q_1\cdots Q_k$, then $A\Rbar$ is an ideal of $\Rbar$ relatively prime to $I$ with prime ideal factorization $A\Rbar=(Q_1\Rbar)\cdots(Q_k\Rbar)$.

    Moreover, if $J$ is an ideal of $\Rbar$ relatively prime to $I$ with prime ideal factorization $J=P_1\cdots P_k$, then $R\cap J$ is an ideal of $R$ relatively prime to $I$ with prime ideal factorization $R\cap J=(R\cap P_1)\cdots (R\cap P_k)$.
\end{lemma}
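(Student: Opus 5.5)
The plan is to reduce both statements to Lemma \ref{comax to I factors in R} and Lemma \ref{go up or go down}, together with the multiplicativity of extension and contraction for ideals coprime to the conductor.

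For the first statement, let $A=Q_1\cdots Q_k$ be an ideal of $R$ with $A+I=R$. Since $A\subseteq Q_i$, each $Q_i$ is also comaximal to $I$. Extension is multiplicative, so $A\Rbar=(Q_1\Rbar)\cdots(Q_k\Rbar)$. By Lemma \ref{go up or go down}, each $Q_i\Rbar$ is prime in $\Rbar$. Moreover, $A\Rbar+I\supseteq (A+I)\Rbar=\Rbar$, so $A\Rbar$ is relatively prime to $I$; note that $I$ is an ideal of $\Rbar$. Each $Q_i\Rbar$ is proper, since otherwise $Q_i=Q_i\Rbar\cap R=R$, again by Lemma \ref{go up or go down}. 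Hence this is a genuine prime factorization, and it is the one, by unique factorization in the Dedekind domain $\Rbar$.

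For the second statement, let $J=P_1\cdots P_k$ be an ideal of $\Rbar$ coprime to $I$. Again each $P_i\supseteq J$ is coprime to $I$, so each $R\cap P_i$ is a prime of $R$ by Lemma \ref{go up or go down}. We also need $R\cap J$ to be coprime to $I$ in $R$. Write $1=j+x$ with $j\in J$ and $x\in I\subseteq R$. Then $j=1-x\in R\cap J$, so $(R\cap J)+I=R$. The same argument shows each $R\cap P_i$ is coprime to $I$. Set $B=(R\cap P_1)\cdots(R\cap P_k)$. This is a product of ideals coprime to $I$, hence it is coprime to $I$.

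The key step is to show $B=R\cap J$. Applying the first statement to $B$ gives
$$B\Rbar=\prod_i (R\cap P_i)\Rbar=\prod_i P_i=J,$$
where the middle equality uses Lemma \ref{go up or go down}. Then Lemma \ref{go up or go down} applied to the $R$-ideal $B$, which is comaximal to $I$, gives $B=B\Rbar\cap R=J\cap R$. Uniqueness of this factorization follows from Lemma \ref{comax to I factors in R}.

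The only delicate point is checking comaximality at each stage so that Lemma \ref{go up or go down} applies. The main obstacle is the element-level argument showing that $R\cap J$ is coprime to $I$ in $R$, rather than merely in $\Rbar$. This is handled by the observation that $I\subseteq R$, so the element $1-x$ lies in $R$.
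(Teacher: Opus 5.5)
Your proof is correct. The first half matches the paper's argument: you use multiplicativity of extension, Lemma \ref{go up or go down} for primality of each $Q_i\Rbar$, and a check that $A\Rbar$ is comaximal to $I$. You are in fact more careful than the paper, since you explicitly verify that each $Q_i$ is comaximal to $I$ (needed to invoke Lemma \ref{go up or go down}) and that each $Q_i\Rbar$ is proper.

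For the second half you take a somewhat different route.
\begin{itemize}
\item The paper first factors $R\cap J=Q_1\cdots Q_n$ in $R$, using the existence part of Lemma \ref{comax to I factors in R}. It then extends to get $J=(Q_1\Rbar)\cdots(Q_n\Rbar)$, and uses unique factorization in the Dedekind domain $\Rbar$ to match each $Q_i\Rbar$ with some $P_i$ before contracting back.
\item You instead write down the candidate $B=(R\cap P_1)\cdots(R\cap P_k)$ directly and compute $B\Rbar=J$. You then recover $B=B\Rbar\cap R=R\cap J$ from the contraction identity in Lemma \ref{go up or go down}.
\end{itemize}

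Your route needs neither a prior factorization of $R\cap J$ nor the matching step via uniqueness in $\Rbar$. The claim becomes a formal consequence of the extension--contraction correspondence together with multiplicativity of extension. The price is that you must know the product $B$ itself is comaximal to $I$, and you handle this correctly by noting that each factor $R\cap P_i$ is. The paper only needs comaximality of $R\cap J$ and lets Dedekind uniqueness do the bookkeeping.
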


\begin{proof}
    Let $A$ be an ideal of $R$ relatively prime to $I$. By Lemma \ref{comax to I factors in R}, we can factor $A$ into a product of prime ideals of $R$, say $A=Q_1\dots Q_k$. Then $$A\Rbar=(Q_1\cdots Q_k)\Rbar=(Q_1\cdots Q_k)\Rbar^k=(Q_1\Rbar)\dots(Q_k\Rbar).$$
    Since Lemma \ref{go up or go down} tells us that each $Q_i\Rbar$ is a prime ideal of $\Rbar$, this is the (unique) prime factorization of $A\Rbar$. Moreover, since $A$ is relatively prime to $I$, then there exist $\alpha\in A$ and $\beta\in I$ such that $\alpha+\beta=1$. Since $\alpha\in A\Rbar$ as well, then $A\Rbar$ is also relatively prime to $I$.

    Now let $J$ be an ideal of $\Rbar$ be an ideal of $\Rbar$ relatively prime to $I$ with prime ideal factorization $J=P_1\cdots P_k$. Since $J$ is relatively prime to $I$, then there exist $\alpha\in J$ and $\beta\in I$ such that $\alpha+\beta=1$. Since $\alpha=1-\beta\in R\cap J$, then $R\cap J$ is also relatively prime to $I$. By Lemma \ref{comax to I factors in R}, we can therefore factor $R\cap J$ into a product of prime ideals of $R$, say $R\cap J=Q_1\dots Q_n$. By the previous part of this proof and Lemma \ref{go up or go down}, $$J=(R\cap J)\Rbar=(Q_1\Rbar)\cdots(Q_n\Rbar),$$ with each $Q_i\Rbar$ a prime ideal of $\Rbar$. Since prime ideal factorization in $\Rbar$ is unique, then $n=k$ and (after potential reordering), $P_i=Q_i\Rbar$ for each $1\leq i\leq k$. Thus, $$R\cap J=Q_1\cdots Q_k=(Q_1\Rbar\cap R)\cdots(Q_k\Rbar\cap R)=(R\cap P_1)\cdots (R\cap P_k).$$
\end{proof}

These final lemmata will give us ways to more easily construct irreducibles in $R$ and elements in $R$ of large elasticity. Of particular note here is Lemma \ref{prime factors of irreducible}, which demonstrates exactly how the $S$-relative Davenport constant will be useful to us.

\begin{lemma}
    \label{remove other primes}\cite[Lemma 2.7]{KettingerMoles2025elasticity}
    Let $R$ be an order in a number field $K$ with conductor ideal $I$. Let $\alpha\in R$ be a nonzero nonunit with $\rho_R(\alpha)>1$, and suppose that $\alpha=\beta\pi$ for some $\beta\in\Rbar$ and $\pi\in R$, with $\pi$ a prime element of $\Rbar$ relatively prime to $I$. Then $\beta\in R$ and $\rho_R(\beta)\geq \rho_R(\alpha)$.
\end{lemma}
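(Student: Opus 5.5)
The plan is to first show that $\pi$ is actually a prime element of $R$ itself, and then to compare the length sets of $\alpha$ and $\beta$ directly. Concretely, I aim to prove $\ell_R(\beta)=\{m-1\mid m\in\ell_R(\alpha)\}$. The inequality on elasticities then follows from the elementary fact that $\frac{m-1}{n-1}\geq\frac{m}{n}$ whenever $m\geq n\geq 2$.

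\emph{Step 1: $\pi R$ is comaximal to $I$.} Since $\pi\Rbar+I=\Rbar$, the image of $\pi$ is a unit in the finite ring $\Rbar/I$. Because $I$ is an ideal of both rings, $R/I$ is a subring of $\Rbar/I$. Multiplication by $\pi$ is injective on $\Rbar/I$, hence injective on the finite ring $R/I$, hence bijective there. So there is $y\in R$ with $\pi y\equiv 1$ modulo $I$, which gives $\pi R+I=R$.

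\emph{Step 2: $\beta\in R$ and $\pi$ is prime in $R$.} By Step 1 and Lemma \ref{go up or go down}, $\pi\Rbar\cap R=(\pi R)\Rbar\cap R=\pi R$. The same lemma shows that $\pi R=R\cap\pi\Rbar$ is a prime ideal of $R$, since $\pi\Rbar$ is prime and comaximal to $I$. Thus $\pi$ is a prime element of $R$. Moreover $\alpha=\beta\pi\in\pi\Rbar\cap R=\pi R$, so $\alpha=\pi r$ for some $r\in R$. Cancelling $\pi$ in the domain $\Rbar$ gives $\beta=r\in R$.

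\emph{Step 3: comparing factorizations.} First I would check that every length of $\alpha$ is at least $2$. If $1\in\ell_R(\alpha)$, then $\alpha$ is irreducible and divisible by the prime $\pi$, so $\alpha$ is associate to $\pi$. A prime element has the unique length set $\{1\}$, which forces $\rho_R(\alpha)=1$, contrary to hypothesis. Hence every length of $\alpha$ is at least $2$, and in particular $\beta$ is a nonunit.

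Now let $\alpha=a_1\cdots a_m$ with each $a_i\in\Irr(R)$. Since $\pi$ is prime in $R$, it divides some $a_i$, say $a_i=\pi u$ with $u\in R$. Irreducibility of $a_i$ forces $u$ to be a unit of $R$. Cancelling $\pi$ then expresses $\beta$ as $u$ times the remaining $m-1$ irreducibles, so $m-1\in\ell_R(\beta)$. Conversely, any factorization of $\beta$ into $k$ irreducibles, multiplied by the irreducible $\pi$, gives a factorization of $\alpha$ of length $k+1$. Therefore $\ell_R(\beta)=\ell_R(\alpha)-1$, with all elements of $\ell_R(\alpha)$ at least $2$.

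\emph{Step 4: conclusion.} For $m\geq n\geq 2$ in $\ell_R(\alpha)$, we have $\frac{m-1}{n-1}\geq\frac{m}{n}$. Taking suprema gives $\rho_R(\beta)\geq\rho_R(\alpha)$.

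The only genuinely delicate point is Step 1. Comaximality of $\pi$ with $I$ is given in $\Rbar$, but Lemma \ref{go up or go down} must be applied to the $R$-ideal $\pi R$, so we need comaximality in $R$; the finiteness of $R/I$ is what allows this transfer.
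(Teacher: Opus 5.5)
Your proof is correct and complete. The paper gives no argument of its own for this lemma; it imports it from \cite[Lemma 2.7]{KettingerMoles2025elasticity}, so there is no in-text proof to compare against. Your self-contained route works as written:
\begin{itemize}
\item transfer comaximality of $\pi$ with $I$ from $\Rbar$ to $R$ using finiteness of $R/I$ (needed, since the first part of Lemma \ref{go up or go down} is stated for $R$-ideals);
\item deduce $\pi\Rbar\cap R=\pi R$ is prime in $R$, so $\beta\in R$;
\item use $\rho_R(\alpha)>1$ to rule out $\alpha$ being associate to $\pi$;
\item prove $\ell_R(\beta)=\{m-1\mid m\in\ell_R(\alpha)\}$, and conclude via $\frac{m-1}{n-1}\geq\frac{m}{n}$ for $m\geq n\geq 2$.
\end{itemize}
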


\begin{lemma}
    \label{factor relatively prime to I}
    Let $R$ be an order in a number field with conductor ideal $I$. Let $\alpha\in\Rbar$, and let $\alpha\Rbar=P_1\cdots P_sQ_1\cdots Q_t$ be the (unique) factorization of $\alpha\Rbar$ into prime ideals of $\Rbar$, with each $P_i\mid I$ and $Q_j\nmid I$ for $1\leq i\leq s$ and $1\leq j\leq t$. Then $\alpha$ has a nonunit divisor lying in $R$ which is relatively prime to $I$ if and only if $\{[R\cap Q_1],\dots,[R\cap Q_t]\}$ has a zero-sum subsequence in $\Cl(R)$.
\end{lemma}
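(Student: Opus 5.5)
We prove the two implications separately, using the ideal correspondence of Lemmas \ref{go up or go down} and \ref{prime factors in R} to move between divisors of $\alpha$ and subsequences of $\{[R\cap Q_1],\dots,[R\cap Q_t]\}$.

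\emph{Forward direction.} Suppose $\beta\in R$ is a nonunit divisor of $\alpha$ with $\beta R$ relatively prime to $I$. By Lemma \ref{comax to I factors in R}, $\beta R=Q_1'\cdots Q_k'$ for prime ideals $Q_i'$ of $R$ relatively prime to $I$, and $k\geq 1$ because $\beta$ is a nonunit of $R$. Lemma \ref{prime factors in R} gives $\beta\Rbar=(Q_1'\Rbar)\cdots(Q_k'\Rbar)$, each factor a prime of $\Rbar$ not dividing $I$. Since $\beta\mid\alpha$ in $\Rbar$, unique factorization in $\Rbar$ identifies these, after reordering, with a sub-multiset $Q_1,\dots,Q_k$ of the $Q_j$. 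Lemma \ref{go up or go down} then gives $Q_i'=R\cap Q_i$, so $\beta R=(R\cap Q_1)\cdots(R\cap Q_k)$. This ideal is principal, so $[R\cap Q_1]+\dots+[R\cap Q_k]=0$ in $\Cl(R)$, and the subsequence is nonempty.

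\emph{Reverse direction.} Suppose, after reordering, that $[R\cap Q_1]+\dots+[R\cap Q_k]=0$ with $k\ge1$. Then $A=(R\cap Q_1)\cdots(R\cap Q_k)$ is principal in $R$; it is invertible by Lemma \ref{comax to I means invertible}, and its class is the sum of the classes. Write $A=\beta R$ with $\beta\in R$.
\begin{itemize}
\item $\beta$ is relatively prime to $I$: each factor is, hence so is the product.
\item $\beta$ is a nonunit: $A\subseteq R\cap Q_1\neq R$.
\item $\beta$ divides $\alpha$ in $\Rbar$: Lemma \ref{prime factors in R} gives $\beta\Rbar=A\Rbar=Q_1\cdots Q_k$, which divides $\alpha\Rbar$.
\end{itemize}
So $\alpha=\beta\gamma$ with $\gamma\in\Rbar$.

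The main point needing care is interpreting ``divisor lying in $R$''. I read it as $\beta\in R$ with $\alpha/\beta\in\Rbar$; this is the sense in which the lemma will feed into Lemma \ref{remove other primes}. If instead divisibility is required inside $R$, one must also show $\gamma\in R$, which in general needs $\alpha\in R$ and an argument in the spirit of Lemma \ref{remove other primes}. I would state the $\Rbar$-divisibility reading explicitly.

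The other thing to check is the class-group bookkeeping: $\Cl(R)$ here is the group generated by ideals coprime to $I$, so the class of a product is the sum of the classes, and a zero sum means the product is principal.
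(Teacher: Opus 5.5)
Your argument is correct and is essentially the paper's proof: both directions pass between $r\Rbar=Q_1\cdots Q_k$ and $rR=(R\cap Q_1)\cdots(R\cap Q_k)$ via Lemmas \ref{go up or go down} and \ref{prime factors in R}. The only difference is that in the forward direction the paper factors $r\Rbar$ in $\Rbar$ first and contracts, whereas you factor $rR$ in $R$ and lift, and this is immaterial. Your reading of ``divisor'' as divisibility in $\Rbar$ is also the paper's: its reverse direction writes $\alpha=(u\beta)r$ with $u\beta\in\Rbar$ a generator of $P_1\cdots P_sQ_{\ell+1}\cdots Q_t$, and whether the cofactor lies in $R$ is settled where the lemma is applied (e.g.\ Lemma \ref{prime factors of irreducible}, via $P^j\subseteq I\subseteq R$), not in Lemma \ref{remove other primes}.
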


\begin{proof}
    First, assume that $\alpha$ has some nonunit divisor $r\in R$, with $r$ relatively prime to $I$. Then necessarily (after potential reordering), there exists $1\leq k\leq t$ such that $r\Rbar=Q_1\cdots Q_k$. By Lemmas \ref{go up or go down} and \ref{prime factors in R},
    $$rR=(rR)\Rbar\cap R=r\Rbar\cap R=(R\cap Q_1)\cdots(R\cap Q_k),$$
    so $\{[R\cap Q_1],\dots,[R\cap Q_k]\}$ is a zero-sum subsequence of $\{[R\cap Q_1],\dots,[R\cap Q_t]\}$ in $\Cl(R)$.

    For the converse, assume that $\{[R\cap Q_1],\dots,[R\cap Q_t]\}$ has a zero-sum subsequence in $\Cl(R)$; without loss of generality, assume that this zero-sum subsequence is of the form $\{[R\cap Q_1],\dots,[R\cap Q_\ell]\}$ for some $1\leq \ell\leq t$. Then for some $r\in R$, $rR=(R\cap Q_1)\cdots (R\cap Q_\ell)$. By applying Lemmas \ref{go up or go down} and \ref{prime factors in R}, we get
    $$r\Rbar=(rR)\Rbar=(R\cap Q_1)\Rbar\cdots(R\cap Q_\ell)\Rbar=Q_1\dots Q_\ell.$$
    Now letting $\beta\in \Rbar$ be a generator of $P_1\cdots P_sQ_{\ell+1}\cdots Q_t$, there exists a unit $u\in U(\Rbar)$ such that $\alpha=(u\beta)r$. Thus, $\alpha$ has a nonunit divisor lying in $R$ which is relatively prime to $I$ (namely, $r$).
\end{proof}
    
\begin{lemma}
    \label{elasticity relatively prime to I}
    Let $R$ be an order in a number field with conductor ideal $I$. If $\alpha\in R$ is a nonzero, nonunit relatively prime to $I$, then $\rho_R(\alpha)\leq \max\{1,\frac{D(\Cl(R))}{2}\}$. Moreover, this inequality is sharp.
\end{lemma}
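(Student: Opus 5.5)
The plan is to reduce everything to the Dedekind-like monoid of ideals of $R$ that are relatively prime to $I$, and then to run Narkiewicz's classical argument inside $\Cl(R)$.

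\emph{Step 1: every factorization of $\alpha$ stays away from $I$.} Let $\alpha\in R$ be a nonzero nonunit relatively prime to $I$, and let $\alpha=\pi_1\cdots\pi_k$ be any factorization into irreducibles of $R$. Since $\alpha R+I=R$ and $\alpha R\subseteq\pi_iR$, each $\pi_iR$ is also relatively prime to $I$. By Lemma \ref{comax to I factors in R}, each $\pi_iR$ then factors uniquely into prime ideals of $R$ that are relatively prime to $I$, and the multiset of prime ideals of $\alpha R$ is the disjoint union of the multisets for the $\pi_iR$. A principal ideal is trivial in $\Cl(R)$, so the classes of the primes of $\pi_iR$ form a zero-sum sequence.

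\emph{Step 2: irreducibility forces minimality.} Suppose the sequence of classes of the primes of $\pi_iR$ had a proper nonempty zero-sum subsequence. Then the product $A$ of the corresponding primes is an invertible ideal (Lemma \ref{comax to I means invertible}) that is trivial in $\Cl(R)$, so $A=rR$ for some $r\in R$. The cofactor $B$ with $\pi_iR=AB$ is also trivial in $\Cl(R)$, so $B=sR$. This gives $\pi_i=urs$ for some unit $u$ of $R$, with $r$ and $s$ nonunits, contradicting irreducibility. So each irreducible $\pi_i$ has at most $D(\Cl(R))$ prime factors, and each irreducible has at least one prime factor.

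\emph{Step 3: the bound.} Let $N$ be the number of prime factors of $\alpha R$, counted with multiplicity. Step 2 gives $k\ge N/D(\Cl(R))$. For the matching upper bound on $k$, I would use the standard refinement: an irreducible with exactly one prime factor $Q$ has $[Q]=0$, i.e.\ $Q$ is principal. Such primes appear as a single irreducible in every factorization of $\alpha$, so removing them changes all lengths by the same amount, which can only decrease the ratio. This is essentially Lemma \ref{remove other primes}, transported to $R$. On the remaining part, every irreducible has at least two prime factors, so $k\le N'/2$ while the shortest length is at least $N'/D(\Cl(R))$. This gives $\rho_R(\alpha)\le D(\Cl(R))/2$. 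If nothing remains after the removal, the lengths are all equal and $\rho_R(\alpha)=1$.

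\emph{Step 4: sharpness.} If $D(\Cl(R))\le 2$ there is nothing to prove, since $\rho_R(\alpha)=1$ is attained by any prime element. Otherwise, take a minimal zero-sum sequence $g_1,\dots,g_D$ in $\Cl(R)$ of length $D=D(\Cl(R))$. By Lemma \ref{primes in ideal classes}, choose primes $Q_j\in g_j$ and $Q_j'\in -g_j$, relatively prime to $I$; this is possible because only finitely many primes contain $I$. Then
\[
\alpha R=\prod_j Q_j\prod_j Q_j'
\]
has a factorization into $2$ irreducibles, namely $\prod_j Q_j$ and $\prod_j Q_j'$, and another into $D$ irreducibles, namely the ideals $Q_jQ_j'$. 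Each of these ideals is principal, and its generator is irreducible because the corresponding sequence of classes is minimal zero-sum. This gives $\rho_R(\alpha)\ge D/2$.

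The main obstacle I expect is the technical bookkeeping with units: we must confirm that principal ideals of $R$ that differ only by a unit give genuine element factorizations in $R$, not merely in $\Rbar$. Working entirely with $R$-ideals and their generators in $R$ avoids this.
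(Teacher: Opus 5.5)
Your proposal is correct and follows essentially the same route as the paper: classes of the prime factors of an irreducible form a minimal zero-sum sequence, which bounds its number of prime factors by $D(\Cl(R))$. Prime factors are then stripped off (the paper cites Lemma \ref{remove other primes}, while you check directly in $R$ that a principal prime can only occur in an irreducible as itself), the resulting two-sided count gives the bound, and sharpness comes from the same $\prod_j Q_j\cdot\prod_j Q_j'=\prod_j Q_jQ_j'$ construction. One small wording fix: it is \emph{adding} a common amount to all lengths that lowers the max/min ratio, and that is the direction your argument actually uses.
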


\begin{proof}
    Let $\alpha\in R$ be relatively prime to $I$. If $\rho_R(\alpha)=1$, then the result follows trivially; then assume that $\rho_R(\alpha)>1$. Let $\alpha=\pi_1\cdots\pi_m=\tau_1\cdots\tau_n$ be two factorizations of $\alpha$ into irreducibles of $R$. Since each $\pi_i$ and $\tau_j$ must be relatively prime to $I$ as well, then Lemma \ref{comax to I means invertible} ensures us that each $\pi_iR$ and $\tau_jR$ (as well as each prime factor of these ideals) is an invertible ideal of $R$ (and thus represents an ideal class in $\Cl(R)$). Since we are only looking for an upper bound on the elasticity of $\alpha$, Lemma \ref{remove other primes} tells us that it will suffice to show the result in the case that each $\pi_i$ and $\tau_j$ is a non-prime irreducible.

    Let $k$ be the number of prime ideals in the prime factorization of $\alpha R$. Since each $\tau_j$ is a non-prime irreducible, then each $\tau_j$ must factor into at least two prime ideals of $R$; thus, $2n\leq k$. Since each $\pi_i$ is an irreducible element of $R$, then if $\pi_iR=Q_1\cdots Q_s$ is the prime factorization of $\pi_iR$ in $R$, then $\{[Q_1],\dots,[Q_s]\}$ must be a zero-sum sequence in $\Cl(R)$ with no proper zero-sum subsequence. Thus, $\pi_iR$ can have at most $D(\Cl(R))$ prime factors, so $k\leq m\cdot D(\Cl(R))$. Then $2n\leq m\cdot D(\Cl(R))$, so $\frac{n}{m}\leq \frac{D(\Cl(R))}{2}$. Since this inequality holds for every pair of factorizations of $\alpha$, $\rho_R(\alpha)\leq \frac{D(\Cl(R))}{2}$. Thus, $\rho_R(\alpha)\leq\max\{1,\frac{D(\Cl(R))}{2}\}$.

    To show that this inequality is sharp, note that any irreducible element of $R$ has elasticity 1. Then it will suffice to show that when $D(\Cl(R))\geq 2$, there exists an element $\alpha\in R$ relatively prime to $I$ such that $\rho_R(\alpha)=\frac{D(\Cl(R))}{2}$. To this end, let $\{[A_1],\dots,[A_d]\}$ be a zero-sum sequence in $\Cl(R)$ with no proper zero-sum subsequence of length $d=D(\Cl(R))$. By Lemma \ref{primes in ideal classes}, we can select a prime ideal $Q_i$ of $R$ relatively prime to $I$ from each ideal class $[A_i]$. Moreover, we can select a prime ideal $Q_i'$ of $R$ relatively prime to $I$ from each ideal class $[A_i]^{-1}$. Since $\{[A_1],\dots,[A_d]\}$ (and thus $\{[A_1]^{-1},\dots,[A_d]^{-1}\})$ is a zero-sum sequence with no zero-sum subsequence, this ensures that $Q_1\cdots Q_d$, $Q_1'\cdots Q_d'$, and each $Q_iQ_i'$ are principal ideals of $R$ generated by an irreducible element of $R$. Let $\beta$, $\gamma$, and $\pi_i$, respectively, be these irreducible generators. Then for some unit $u\in U(R)$, $\alpha:=\beta\gamma=(u\pi_1)\pi_2\cdots \pi_d$, so $\rho_R(\alpha)\geq \frac{D(\Cl(R))}{2}$. By the previously shown inequality and the fact that each $Q_i$ and $Q_i'$ is relatively prime to $I$, $\alpha$ is an element of $R$ relatively prime to $I$ with elasticity exactly $\frac{D(\Cl(R))}{2}$.
\end{proof}

\begin{lemma}
    \label{prime factors of irreducible}
    Let $R$ be an order in a number field whose conductor ideal $I$ is prime as an ideal of $R$ and both principal and primary as an ideal of $\Rbar$; that is, for some $\pi\in R$ and prime ideal $P$ of $\Rbar$, $I=P^a=\pi\Rbar$ and $I=R\cap P$. If $\tau\in \Irr(R)$ and $\tau\in P$, then $\tau\Rbar$ factors into at most $d_{P^j}(\Cl(R))+j$ prime ideals of $\Rbar$, where $j\in\bN$ is the largest power of $P$ such that $\tau\in P^j$. Moreover, this upper bound is achieved for each $a\leq j<2a$.
\end{lemma}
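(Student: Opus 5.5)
Write $\tau\Rbar=P^jQ_1\cdots Q_t$ with each $Q_i$ a prime of $\Rbar$ not dividing $I$ (the only prime containing $I$ is $P$). The bound to aim for is $t\le d_{P^j}(\Cl(R))$, since the total number of prime factors is then $j+t$. The plan is to show that the sequence $\{[R\cap Q_1],\dots,[R\cap Q_t]\}$ in $\Cl(R)$ is a $P^j$-sum sequence (in the sense of the Remark, i.e. its sum lies in $\tau^{-1}$ of the class of $P^{-j}$, or of $P^j$ up to sign, which by Corollary \ref{negative automorphism} does not matter) and that it has no zero-sum subsequence. Then $t\le d_{P^j}(\Cl(R))$ by definition.

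\textbf{Upper bound.} For the sum condition, note $Q_1\cdots Q_t=\tau\Rbar\cdot P^{-j}$, so in $\Cl(\Rbar)$ the class of $Q_1\cdots Q_t$ is $[P^{-j}]$. By Lemma \ref{prime factors in R}, $R\cap(Q_1\cdots Q_t)=\prod (R\cap Q_i)$, and its image under $\tau$ is $[Q_1\cdots Q_t]$. Hence the sequence sums into $\tau^{-1}([P^{-j}])$. For the no-zero-sum condition, suppose a subsequence, say $Q_1,\dots,Q_\ell$, has zero sum. By Lemma \ref{factor relatively prime to I}, there is $r\in R$, coprime to $I$ and a nonunit, with $r\Rbar=Q_1\cdots Q_\ell$ and $\tau=\beta r$ for some $\beta\in\Rbar$ with $\beta\Rbar=P^jQ_{\ell+1}\cdots Q_t$. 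I must show $\beta\in R$ and $\beta$ is a nonunit of $R$, contradicting irreducibility. Since $j\ge1$, we have $\beta\in P$, and $\beta$ is not a unit. The key point is membership in $R$. We have $P\cap R=I$ and $I$ is an $\Rbar$-ideal, but $\beta$ itself need not lie in $I=P^a$ when $j<a$. So I would instead use $r$ coprime to $I$: pick $s\in R$ with $rs\equiv1 \bmod I$ (possible since $rR+I=R$). Then $\beta=\beta rs+\beta(1-rs)=\tau s+\beta(1-rs)$, and $\beta(1-rs)\in\Rbar I\subseteq I\subseteq R$. Thus $\beta\in R$. This trick is the step I expect to require the most care, but it seems to work generally.

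\textbf{Sharpness for $a\le j<2a$.} Take a $P^j$-sum sequence $\{g_1,\dots,g_d\}$ with no zero-sum subsequence, $d=d_{P^j}(\Cl(R))$. By Lemma \ref{primes in ideal classes}, choose primes $R\cap Q_i$ coprime to $I$ in the classes $g_i$. Then $P^jQ_1\cdots Q_d$ is principal in $\Rbar$, because $P^a=\pi\Rbar$ and the $\tau$-image of the sum matches $[P^{-j}]$ up to the chosen sign convention. Let $\tau$ be a generator. Since $j\ge a$, we get $\tau\in P^a=I\subseteq R$. For irreducibility, suppose $\tau=xy$ with $x,y$ nonunits of $R$. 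If one factor is coprime to $I$, its $\Rbar$-ideal is a product of some $Q_i$'s whose classes sum to zero (via Lemma \ref{go up or go down}), which is a contradiction. Otherwise both lie in $R\cap P=I=P^a$, giving $P^{2a}\mid\tau\Rbar$ and contradicting $j<2a$. The main obstacle here is bookkeeping the sign convention of the coset $P^j$ versus $P^{-j}$, together with the fact that a unit of $\Rbar$ adjusting the generator may not be in $R$. The latter is harmless, since any generator of the ideal lies in $I\subseteq R$.
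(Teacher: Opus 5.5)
Your proposal is correct and follows essentially the same route as the paper. You rule out zero-sum subsequences via Lemma \ref{factor relatively prime to I}, handle the sign $[P^{-j}]$ versus $[P^j]$ with Corollary \ref{negative automorphism}, and realize the bound by a generator of $P^jQ_1\cdots Q_d$ whose factors in $R$ must either be coprime to $I$ or both lie in $R\cap P=I$. The step you flagged as delicate is unnecessary: $\tau\in R\cap P=I=P^a$ forces $j\ge a$, so $\beta\in P^j\subseteq I\subseteq R$ immediately, as the paper notes at the outset, although your argument with $s\in R$ satisfying $1-rs\in I$ is also valid.
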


\begin{proof}
    First, note that since $R\cap P=I$, then for any $\tau\in \Irr(R)$ such that $\tau\in P$, $\tau\in I=P^a$. Also note that if $\tau\in P^{2a}=I^2=(\pi^2)$, then for some $\beta\in \Rbar$, $\tau=\pi(\pi\beta)$. Since both $\pi$ and $\pi\beta$ are nonunits in $R$, then $\tau$ cannot be irreducible. Then letting $j\in\bN$ be maximal such that $\tau\in P^j$, we have that $j=a+i$ for some $0\leq i<a$. We can now factor the ideal $\tau\Rbar$ into prime ideals, $\tau\Rbar=P^jQ_1\cdots Q_k$, where each $Q_i$ is relatively prime to $I$. Now assume that $\{[R\cap Q_1],\dots,[R\cap Q_k]\}$ has a zero-sum subsequence in $\Cl(R)$. By Lemma \ref{factor relatively prime to I}, this means that $\tau$ must have a nonunit divisor lying in $R$ which is relatively prime to $I$, and thus $\tau=\beta\gamma$ with $\gamma\in R\backslash P$ a nonunit and $\beta\in P^j\subseteq I\subseteq R$. Then $\tau$ reduces in $R$, a contradiction. Then no such $0$-subsequence can exist, so $\{[R\cap Q_1],\dots,[R\cap Q_k]\}$ is a sequence in $\Cl(R)$ with no zero-sum subsequence for which $[Q_1\cdots Q_k]=[P^j]^{-1}$. By definition of the small $S$-relative Davenport constant and Corollary \ref{negative automorphism}, $k\leq d_{P^j}(\Cl(R))$. Thus, the number of prime divisors of $\tau\Rbar$ is $k+j\leq d_{P^j}(\Cl(R))+j$.

    Now to show that this upper bound is achieved, let $a\leq j<2a$ and $\{[A_1],\dots,[A_d]\}$ be a sequence of length $d=d_{P^j}(\Cl(R))$ in $\Cl(R)$ with no zero-sum subsequence such that $\{[A_1\Rbar],\dots,[A_d\Rbar]\}$ is a $[P^j]^{-1}$-sequence in $\Cl(\Rbar)$ (such a sequence must exist by definition of the $S$-relative Davenport constant and the discussion above). By Lemma \ref{primes in ideal classes}, we can assume without loss of generality that $A_i$ is a prime ideal of $R$ relatively prime to $I$ for each $1\leq i\leq d$, and by Lemma \ref{go up or go down}, each $Q_i:=A_i\Rbar$ is a prime ideal of $\Rbar$ relatively prime to $I$. Then let $\alpha\in\Rbar$ be any generator of the (principal) ideal $P^jQ_1\cdots Q_d$. Since $j\geq a$, then $\alpha\in I\subseteq R$.

    Assume toward a contradiction that $\alpha=\beta\gamma$ for some nonunits $\beta,\gamma\in R$. Since $R\cap P=I=P^a$ and $j<2a$, then $\beta$ and $\gamma$ cannot both be elements of $P$. Without loss of generality, assume that $\beta\in P$ and $\gamma\notin P$. Then $\alpha$ has a nonunit divisor (namely, $\gamma$) lying in $R$ which is relatively prime to $I$, so Lemma \ref{factor relatively prime to I} tells us that $\{[R\cap Q_1],\dots,[R\cap Q_d]\}=\{[A_1],\dots,[A_d]\}$ must have a zero-sum subsequence in $\Cl(R)$, a contradiction. Then $\alpha\in\Irr(R)$ such that $\alpha\Rbar$ has exactly $d+j$ prime ideal factors.
\end{proof}

With these results in hand, we are now ready to present the main theorem of this section.

\begin{theorem}
    \label{elasticity of principal order}
    Let $R$ be an order in a number field whose conductor ideal $I$ is prime as an ideal of $R$ and both principal and primary as an ideal of $\Rbar$; that is, for some $\pi\in R$ and prime ideal $P$ of $\Rbar$, $I=P^a=\pi\Rbar$ and $I=R\cap P$. Then
    $$\rho(R)=\max\left\{\frac{D(\Cl(R))}{2},\frac{D_{P^i}(\Cl(R))+1}{2}+\frac{i}{a}\middle|0\leq i\leq a-1\right\}.$$
\end{theorem}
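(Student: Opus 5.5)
The proof splits into an upper bound and a matching lower bound. Throughout I use two facts. First, since $P^a=\pi\Rbar$ is principal, $[P^{a+i}]=[P^i]$ in $\Cl(\Rbar)$, so $d_{P^{a+i}}(\Cl(R))=d_{P^i}(\Cl(R))$. Second, $D_S=d_S+1$. With these, the quantity in the statement can be rewritten as
$$\frac{D_{P^i}(\Cl(R))+1}{2}+\frac{i}{a}=1+\frac{i}{a}+\frac{d_{P^{a+i}}(\Cl(R))}{2},$$
which is the per-irreducible count I will aim for.

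\emph{Upper bound.} Fix $\alpha\in R$ with two factorizations $\alpha=\pi_1\cdots\pi_m=\tau_1\cdots\tau_n$ into irreducibles, and aim to show $n/m$ is at most the stated maximum. By Lemma \ref{remove other primes}, I may first cancel any irreducibles that are prime elements of $\Rbar$ coprime to $I$, since this only increases elasticity. Let $V=v_P(\alpha)$, and let $K$ be the number of prime factors of $\alpha\Rbar$ that are coprime to $I$. I split each factorization into irreducibles lying in $P$ and irreducibles lying outside $P$ (the latter are coprime to $I$).
\begin{enumerate}
\item On the short side, by the proof of Lemma \ref{prime factors of irreducible}, an irreducible $\pi_s\in P$ has $v_P(\pi_s)=a+i_s$ with $0\le i_s<a$, and it contributes at most $d_{P^{a+i_s}}(\Cl(R))=d_{P^{i_s}}(\Cl(R))$ of the primes coprime to $I$. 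As in Lemma \ref{elasticity relatively prime to I}, an irreducible $\pi_s\notin P$ contributes at most $D(\Cl(R))$ such primes.
\item On the long side, each $\tau_t\in P$ has $v_P(\tau_t)\ge a$, so there are at most $V/a$ of them. Each remaining $\tau_t\notin P$ is a non-prime irreducible, so it uses at least two of the $K$ primes coprime to $I$, and there are at most $K/2$ of them.
\end{enumerate}
Combining these with $V=\sum_s(a+i_s)$ gives
$$n\le\frac{V}{a}+\frac{K}{2}\le\sum_{\pi_s\in P}\left(1+\frac{i_s}{a}+\frac{d_{P^{i_s}}(\Cl(R))}{2}\right)+\sum_{\pi_s\notin P}\frac{D(\Cl(R))}{2}.$$
Each summand is bounded by the stated maximum, so $n/m$ is too. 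This direction looks routine.

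\emph{Lower bound.} The term $D(\Cl(R))/2$ is already attained by Lemma \ref{elasticity relatively prime to I}. Now fix $0\le i\le a-1$ and set $j=a+i$. The second half of Lemma \ref{prime factors of irreducible} gives an irreducible $\tau$ with $\tau\Rbar=P^{j}Q_1\cdots Q_d$, where $d=d_{P^i}(\Cl(R))$ and the classes $[R\cap Q_k]$ form a sequence with no zero-sum subsequence. Using Lemma \ref{primes in ideal classes}, I choose primes $Q_k'$ of $R$, coprime to $I$, in the inverse classes. Then each $(R\cap Q_k)Q_k'$ is generated by a single irreducible $\gamma_k$.

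For the companion irreducible, note that the classes of the $Q_k'$ sum to $[P^i]=-[P^{a-i}]$. So, when $i>0$, the construction in Lemma \ref{prime factors of irreducible} applied with exponent $j'=2a-i$ produces an irreducible $\tau'$ with $\tau'\Rbar=P^{2a-i}Q_1'\cdots Q_d'$. Then
$$\tau\tau'\Rbar=P^{3a}\prod_{k=1}^{d}(R\cap Q_k)Q_k',$$
which gives the factorization $\tau\tau'=u\,\pi^3\gamma_1\cdots\gamma_d$ with $u$ a unit.

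\emph{Main obstacle.} This single product does not yet give the sharp ratio $1+\frac{i}{a}+\frac{d}{2}$, because the $P$-part is split unevenly between $\tau$ and $\tau'$. I expect the real work to be choosing the right element: I would try $\alpha=\tau^{a}\tau'^{\,i}\cdots$-type products, or high powers $\tau^{N}$ together with suitable inverse-class irreducibles, and balance the $P$-exponents so that the short side uses only copies of $\tau$ (with $P$-valuation $a+i$ each). The goal is a long side made of $\pi$'s and the $\gamma_k$'s of length asymptotically $(a+i)/a+d/2$ per copy of $\tau$, then pass to the supremum. The case $i=0$ should be handled separately, using $\tau$ together with an irreducible built from the inverse sequence with $P$-exponent $a$; this is exactly the situation of Proposition \ref{old result restated}.
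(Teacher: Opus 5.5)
\textbf{Upper bound and $i=0$.} Your upper bound is correct and follows the paper's argument. It is slightly cleaner: you bound the long-side factors lying in $P$ directly by $v_P(\alpha)/a$, which avoids the paper's detour through $\alpha^a$. Your $i=0$ construction is also fine.

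\textbf{The gap.} The missing piece is the lower bound for $0<i<a$: you never produce an element attaining $1+\frac{i}{a}+\frac{d}{2}$, where $d=d_{P^i}(\Cl(R))$. Your $\tau\tau'$ with $\tau'\Rbar=P^{2a-i}Q_1'\cdots Q_d'$ gives only $\frac{3+d}{2}$. That is just the average of the $i$-term and the $(a-i)$-term, since $d_{P^{a-i}}=d_{P^{-i}}=d_{P^i}$. Your fallback of putting only copies of $\tau$ on the short side fails in general. Because $\{[R\cap Q_k]\}$ is zero-sum free, the $Q_k$ in $\tau^N$ can be grouped into two-element zero-sum blocks on the long side only if their classes have order $2$. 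So the inverse classes must come from other short-side irreducibles.

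\textbf{The paper's construction.} The paper gives that companion the \emph{same} $P$-exponent. It lets $\tau'$ generate $P^{a+i}Q_1'\cdots Q_d'$ and uses $(\tau\tau')^a=u\,\pi^{2(a+i)}\gamma_1^a\cdots\gamma_d^a$. This element has factorizations of lengths $2a$ and $2(a+i)+ad$, which gives exactly the target ratio.

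\textbf{A caveat about that step.} The paper does not justify it. The class of $P^{a+i}Q_1'\cdots Q_d'$ in $\Cl(\Rbar)$ is $[P^{a+i}]+[P^i]=2[P^i]$, so $\tau'$ exists only if $2[P^i]=0$. This holds for $i=0$ and whenever $[P]$ has order at most $2$, which covers all the paper's examples. In that case the paper's construction closes your gap.

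Otherwise, an irreducible whose non-$P$ part is $Q_1'\cdots Q_d'$ must have $P$-exponent $\equiv -i$ modulo the order of $[P]$, such as your $2a-i$, and not $a+i$. The obstacle you ran into is then real, and the claimed term need not be attained at all. For instance, suppose $\Cl(R)\cong\Cl(\Rbar)\cong\bZ_3$ with $[P]=1$ and $a=3$. Assign weight $\frac13$ to $P$, and weights $1,\frac{7}{12},\frac{5}{12}$ to primes coprime to $I$ of class $0,1,2$ respectively. Using the description of irreducibles established in Lemma \ref{prime factors of irreducible}, every irreducible of $R$ has weight in $[1,\frac52]$. Hence $\rho(R)\le\frac52$, while the $i=2$ term of the formula is $\frac83$.

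So your lower-bound step cannot be completed without an extra hypothesis such as $2[P^i]=0$ in $\Cl(\Rbar)$.
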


\begin{proof}
    For ease of notation, we will let $M$ refer to the maximum presented above. To show equality, we will first show that $\rho_R(\alpha)\geq M$ for some nonzero, nonunit $\alpha\in R$; then that $\rho_R(\alpha)\leq M$ for every nonzero, nonunit $\alpha\in R$. First, note that if $M=\frac{D(\Cl(R))}{2}$, then $D(\Cl(R))\geq 2$; by Lemma \ref{elasticity relatively prime to I}, we can select a nonzero, nonunit $\alpha\in R$ relatively prime to $I$ such that $\rho_R(\alpha)=\frac{D(\Cl(R))}{2}=M$. 

    Now assume for some $0\leq i\leq a-1$ that $M=\frac{D_{P^i}(\Cl(R))+1}{2}+\frac{i}{a}$. By Lemma \ref{prime factors of irreducible}, we can select $\tau\in \Irr(R)$ such that $\tau\Rbar=P^jQ_1\cdots Q_d$, where $j=a+i$, $d=d_{P^j}(\Cl(R))$, and each $Q_i$ is a prime ideal of $\Rbar$ relatively prime to $I$. Similarly, we can let $\tau'$ be any generator of the (principal) ideal $\tau'\Rbar=P^jQ_1'\cdots Q_d'$, where $[R\cap Q_i']=[R\cap Q_i]^{-1}$ for each $1\leq i\leq d$. In the same way that we showed that $\tau$ was irreducible in the proof of Lemma \ref{prime factors of irreducible}, we can conclude that $\tau'\in \Irr(R)$. Finally, let $\pi_i$ be a generator of the principal ideal $Q_iQ_i'$ for each $1\leq i\leq d$. Since $[\pi_i\Rbar\cap R]=[R\cap Q_i][R\cap Q_i']=[R]$, then we can without loss of generality assume that $\pi_i\in R$. Moreover, since each $R\cap Q_i$ is non-principal, each $\pi_i\in \Irr(R)$. Thus, for some unit $u\in U(\Rbar)$,
    $$\alpha:=(\tau\tau')^a=(u\pi)\pi^{2j-1}\pi_1^a\cdots\pi_d^a.$$
    Since the left-hand side of this expression is a product of $2a$ irreducibles in $R$ and the right-hand side of this expression is a product of $2j+ad$ irreducibles in $R$, then
    $$\rho_R(\alpha)\geq \frac{2j+ad}{2a}=\frac{d_{P^j}(\Cl(R))}{2}+\frac{a+i}{a}=\frac{D_{P^i}(\Cl(R))+1}{2}+\frac{i}{a}=M.$$ Thus, $\rho(R)\geq M$.
    
    We now need to show that $\rho_R(\alpha)\leq M$ for every nonzero, nonunit $\alpha\in R$. Let $\alpha\in R$ be a nonzero nonunit, and assume we have two factorizations of $\alpha$ into irreducibles of $R$: $\alpha=\pi_1\cdots \pi_m=\tau_1\cdots \tau_n$. If $\rho_R(\alpha)=1$, then $\rho_R(\alpha)\leq M$ trivially. Then assume that $\rho_R(\alpha)\neq 1$; without loss of generality, we may also assume that $n> m$. By Lemma \ref{remove other primes}, this allows us to assume that none of the $\pi_i$'s or $\tau_j$'s are prime elements of $\Rbar$ relatively prime to $I$. After possibly reordering, let $0\leq s\leq m$ and $0\leq t\leq n$ such that $\pi_1,\dots,\pi_s,\tau_1,\dots,\tau_t\in P$ and $\pi_{s+1},\dots \pi_m,\tau_{t+1},\dots \tau_n\notin P$. If $\tau_1\Rbar=P^jQ_1\cdots Q_k$ for some $j>a$ and prime ideals $Q_i$ relatively prime to $I$, then note that $\alpha^a=(\pi_1\cdots\pi_m)^a=\tau_1^a(\tau_2\cdots\tau_n)^a$. Since $\tau_1^a\Rbar=(P^a)^j(Q_1\cdots Q_k)^a$, then for some generator $\beta\in\Rbar$ of $(Q_1\cdots Q_k)^a$, $\tau_1^a=(\pi\beta)\pi^{j-1}$, a product of at least $j>a$ irreducibles in $R$. Then $\alpha^a$ can be written simultaneously as a product of $am$ irreducibles in $R$ and strictly more than $an$ irreducibles of $R$, so $\rho_R(\alpha^a)>\frac{n}{m}$. Since we are only interested in elements of large elasticity, it will suffice to show that $\frac{n}{m}\leq M$ when $\tau_j$ lies in $P^a\backslash P^{a+1}$ for each $1\leq j\leq t$.

    Let $k$ be the (unique) number of prime ideal factors of $\alpha\Rbar$. For $1\leq \ell\leq t$, note that $\tau_\ell\Rbar$ has exactly $a$ factors of $P$. Furthermore, for $t+1\leq \ell\leq n$, since $\tau_\ell\notin P$, then $\tau_\ell\Rbar$ has at least two prime ideal factors, and these primes must be distinct from $P$. Then the number of prime factors distinct from $P$ in the prime ideal factorization of $\alpha\Rbar$ is $k-ta\geq 2(n-t)=2n-2t$.

    For $1\leq \ell\leq s$, note that $\pi_\ell\Rbar$ must have at least $a$ factors of $P$ but no more than $2a-1$; for each $1\leq \ell\leq s$, let $1\leq i_\ell<a$ such that $\pi_\ell\Rbar$ has exactly $a+i_\ell$ factors of $P$. Thus, $ta=\sum_{\ell=1}^s(a+i_\ell)$. By Lemma \ref{prime factors of irreducible}, each $\pi_\ell$ for $1\leq \ell\leq s$ can have at most $d_\ell:=d_{P^{i_\ell}}(\Cl(R))$ prime factors distinct from $P$. Similarly, by the proof of Lemma \ref{elasticity relatively prime to I}, each $\pi_\ell$ for $s+1\leq \ell\leq m$ can have at most $D(\Cl(R))$ prime factors (distinct from $P$). Thus,
    $$2n-2t\leq k-ta\leq\sum_{j=1}^sd_j+(m-s)D(\Cl(R)).$$
    Since $ta=\sum_{\ell=1}^s(a+i_\ell)$, we have that $2t=\sum_{\ell=1}^s2+\frac{2i_\ell}{a}$. Then
    $$2n\leq 2t+\sum_{\ell=1}^sd_\ell+(m-s)D(\Cl(R))=\sum_{\ell=1}^s\left[(d_\ell+2)+\frac{2i_\ell}{a}\right]+(m-s)D(\Cl(R)).$$
    Dividing by 2, this yields
    $$n\leq \sum_{\ell=1}^s\left[\frac{D_{P^{i_\ell}}(\Cl(R))+1}{2}+\frac{i_\ell}{a}\right]+(m-s)\frac{D(\Cl(R))}{2}.$$
    Note that the right-hand side of this inequality is now a sum of $m$ terms, each of which is bounded above by $M$; then $n\leq mM$, so $\frac{n}{m}\leq M$. Then $\rho_R(\alpha)\leq M$ for every nonzero, nonunit $\alpha\in R$, so $\rho(R)\leq M$. Since we already showed the reverse inequality, we can conclude that $\rho(R)=M$.
\end{proof}

Along with Proposition \ref{nonprimary conductor ideal}, this characterizes the elasticity of any order $R$ whose conductor ideal $I$ is principal in $\Rbar$ and prime in $R$. Moreover, if $I$ is a prime ideal of $\Rbar$, $I$ will automatically be a prime ideal of $R$; then case (1) of Proposition \ref{old result restated} is exactly equivalent to Theorem \ref{elasticity of principal order} when $a=1$. 

It is worth noting that Theorem \ref{elasticity of principal order}, along with Corollary \ref{davenport upper bound}, actually gives quite a narrow range for the potential elasticities of any order of the given form.

\begin{corollary}
    Let $R$ be an order in a number field whose conductor ideal $I=P^a$ is prime as an ideal of $R$ and principal as an ideal of $\Rbar$. Then
    $$\frac{D(\Cl(R))}{2}\leq\rho(R)\leq \frac{D(\Cl(R))+1}{2}+\frac{a-1}{a}<\frac{D(\Cl(R))}{2}+\frac{3}{2}.$$
\end{corollary}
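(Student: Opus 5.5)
The corollary should follow directly from Theorem \ref{elasticity of principal order} together with Corollary \ref{davenport upper bound}; no new arithmetic input is needed. Note first that the hypotheses match those of the theorem. Since $I$ is prime in $R$, Proposition \ref{prime ideal of R} gives $I=R\cap P$. The statement already writes $I=P^a$, so $I$ is primary in $\Rbar$. Principality in $\Rbar$ gives $I=\pi\Rbar$ for some $\pi$, and $\pi\in I\subseteq R$. So $\rho(R)$ equals the maximum $M$ of the set
$$\left\{\frac{D(\Cl(R))}{2},\ \frac{D_{P^i}(\Cl(R))+1}{2}+\frac{i}{a}\ \middle|\ 0\leq i\leq a-1\right\}.$$

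The lower bound is immediate, because $\frac{D(\Cl(R))}{2}$ is one of the elements over which the maximum is taken.

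For the upper bound, I will bound each remaining term separately. By Corollary \ref{davenport upper bound}, applied with $G=\Cl(R)$ and $S$ the set of classes mapping to $[P^i]$ under $\tau$, we have $D_{P^i}(\Cl(R))\leq D(\Cl(R))$ for every $i$. Also $\frac{i}{a}\leq\frac{a-1}{a}$ for $0\leq i\leq a-1$. Hence each term of the second kind is at most
$$\frac{D(\Cl(R))+1}{2}+\frac{a-1}{a}.$$
The first term $\frac{D(\Cl(R))}{2}$ is trivially below this bound. So $M$ is bounded by it as well.

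The final strict inequality is the computation
$$\frac{D(\Cl(R))+1}{2}+\frac{a-1}{a}=\frac{D(\Cl(R))}{2}+\frac12+1-\frac1a<\frac{D(\Cl(R))}{2}+\frac32,$$
which holds since $a\geq 1$. There is no real obstacle here. The only point needing care is checking the hypotheses of Theorem \ref{elasticity of principal order} as above, in particular that the generator $\pi$ lies in $R$.
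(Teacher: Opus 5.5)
Your proposal is correct and follows the paper's approach: the paper presents this corollary as an immediate consequence of Theorem \ref{elasticity of principal order} together with Corollary \ref{davenport upper bound}, bounding $D_{P^i}(\Cl(R))\leq D(\Cl(R))$ and $\frac{i}{a}\leq\frac{a-1}{a}$ exactly as you do. Your explicit verification of the theorem's hypotheses, via Proposition \ref{prime ideal of R} and the observation $\pi\in I\subseteq R$, is a detail the paper leaves implicit.
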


Theorem \ref{elasticity of principal order} also simplifies greatly in the case that $\Rbar$ is a PID.

\begin{corollary}
    \label{pid}
    Let $R$ be an order in a number field whose conductor ideal $I=P^a$ is prime as an ideal of $R$. Also assume that the integral closure $\Rbar$ is a PID. Then
    $$\rho(R)=\frac{D(\Cl(R))+1}{2}+\frac{a-1}{a}.$$
\end{corollary}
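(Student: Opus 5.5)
Since $\Rbar$ is a PID, every ideal of $\Rbar$ is principal. In particular $I=P^a$ is principal, so $I=\pi\Rbar$ for some $\pi$. Lowering $\pi$ into $R$ is automatic: any generator of $I$ lies in $I\subseteq R$. By Proposition \ref{prime ideal of R}, the hypothesis that $I$ is prime in $R$ gives $I=R\cap P$. So all hypotheses of Theorem \ref{elasticity of principal order} hold, and
$$\rho(R)=\max\left\{\frac{D(\Cl(R))}{2},\frac{D_{P^i}(\Cl(R))+1}{2}+\frac{i}{a}\middle|0\leq i\leq a-1\right\}.$$
The remaining task is to evaluate each $D_{P^i}(\Cl(R))$ and find the maximum.

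By the Remark at the end of Section 2, $D_{P^i}(\Cl(R))$ means $D_S(\Cl(R))$ with $S=\tau^{-1}(\{[P^i]\})$. Because $\Rbar$ is a PID, $\Cl(\Rbar)$ is trivial, so $[P^i]=[\Rbar]$ for every $i$. Since $\tau:\Cl(R)\to\Cl(\Rbar)$ maps onto the trivial group, $\ker(\tau)=\Cl(R)$, so $S=\Cl(R)$ for every $i$. The Proposition stating $D_G(G)=D(G)$ then gives $D_{P^i}(\Cl(R))=D(\Cl(R))$ for all $0\leq i\leq a-1$.

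Substituting, the maximum becomes
$$\max\left\{\frac{D(\Cl(R))}{2},\ \frac{D(\Cl(R))+1}{2}+\frac{i}{a}\ \middle|\ 0\leq i\leq a-1\right\}.$$
The second expression is strictly increasing in $i$, so it is largest at $i=a-1$. Even at $i=0$ it already exceeds $D(\Cl(R))/2$, so the first term never wins. Hence $\rho(R)=\frac{D(\Cl(R))+1}{2}+\frac{a-1}{a}$.

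The only step needing care is the identification $S=\Cl(R)$, which rests on $\Cl(\Rbar)$ being trivial and on the convention in the Remark. Once that is in place, the argument is direct substitution into Theorem \ref{elasticity of principal order}.
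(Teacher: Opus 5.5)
Your proof is correct and follows essentially the paper's argument: since $\Rbar$ is a PID, the hypotheses of Theorem \ref{elasticity of principal order} hold, and triviality of $\Cl(\Rbar)$ gives $\ker(\tau)=\Cl(R)$, so $D_{P^i}(\Cl(R))=D(\Cl(R))$ for every $i$ and the maximum is attained at $i=a-1$. You also spell out checks the paper leaves implicit: that $I=R\cap P$ by Proposition \ref{prime ideal of R}, that any generator of $I$ already lies in $R$, and that the term $\frac{D(\Cl(R))}{2}$ is never the maximum.
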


\begin{proof}
    Since $\Rbar$ is a PID, then $I$ is automatically a principal ideal of $\Rbar$. Furthermore, since $D_{P^i}(\Cl(R))=D_{\Cl(R)}(\Cl(R))=D(\Cl(R))$ for every $i\in\bN_0$, then by Theorem \ref{elasticity of principal order},
    $$\rho(R)=\max\left\{\frac{D(\Cl(R))}{2},\frac{D_{P^i}(\Cl(R))+1}{2}+\frac{i}{a}\middle|0\leq i\leq a-1\right\}=\frac{D(\Cl(R))+1}{2}+\frac{a-1}{a}.$$
\end{proof}

Even when $\Rbar$ is not a PID, we can still make Theorem \ref{elasticity of principal order} easier to use by noting that, of course, $\frac{i}{a}$ increases with larger values of $i$. Thus, if two different values of $i$ give identical values of $D_{P^i}(\Cl(R))$, we need only look at the term with the larger value of $i$. In particular, we can make use of Proposition \ref{automorphic image} to reduce the values of $i$ we must check. Moreover, we can use Corollary \ref{negative automorphism} to obtain the following.




\begin{corollary}
    \label{simpler assuming conjecture}
    Let $R$ be an order in a number field whose conductor ideal $I$ is prime as an ideal of $R$ and both principal and primary as an ideal of $\Rbar$; that is, for some $\pi\in R$ and prime ideal $P$ of $\Rbar$, $I=P^a=\pi\Rbar$ and $I=R\cap P$. If $D_P(\Cl(R))\geq D_{P^i}(\Cl(R))$ for every $i\in\bN$, then
    $$\rho(R)=\max\left\{\frac{D(\Cl(R))}{2},\frac{D_P(\Cl(R))+1}{2}+\frac{a-1}{a}\right\}.$$
    Note that this will hold whenever Conjecture \ref{larger d for generator} applies to $G=\Cl(R)$, $H=\ker(\tau)$, and $\alpha=[P]$. In particular, by Corollary \ref{larger relative dav for cyclic generator}, this will hold whenever $\Cl(R)$ is cyclic.
\end{corollary}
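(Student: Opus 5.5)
The approach is to read the statement off the maximum in Theorem \ref{elasticity of principal order}: first identify the term with $i=a-1$, then show that every other term with $0\leq i\leq a-1$ is dominated by it. Recall that $D_{P^i}(\Cl(R))$ means $D_S(\Cl(R))$ for $S=\tau^{-1}(\{[P^i]\})$, a coset of $\ker(\tau)$ in $\Cl(R)$.

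The key observation uses the principality of $I$. Since $P^a=I=\pi\Rbar$, we have $[P]^a=[\Rbar]$ in $\Cl(\Rbar)$, so $[P^{a-1}]=[P]^{-1}$. Hence the coset $\tau^{-1}(\{[P^{a-1}]\})$ is the negative of the coset $\tau^{-1}(\{[P]\})$ in $\Cl(R)$. By Corollary \ref{negative automorphism},
$$D_{P^{a-1}}(\Cl(R))=D_P(\Cl(R)).$$
So the $i=a-1$ term of the maximum in Theorem \ref{elasticity of principal order} is exactly $\frac{D_P(\Cl(R))+1}{2}+\frac{a-1}{a}$.

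Next I would show every other term is at most this one. For $1\leq i\leq a-1$, the hypothesis gives $D_{P^i}(\Cl(R))\leq D_P(\Cl(R))$, and trivially $\frac{i}{a}\leq\frac{a-1}{a}$. For $i=0$ the coset is $\ker(\tau)$ itself, i.e.\ $D_{\Rbar}(\Cl(R))$. The case split is as follows.
\begin{enumerate}
    \item If $a=1$, then $i=0$ is the only index and it coincides with the term already identified, because $[P]=[I]$ is trivial in $\Cl(\Rbar)$.
    \item If $a\geq 2$, apply the hypothesis with $i=a\in\bN$. Since $[P^a]$ is trivial, this gives $D_{\Rbar}(\Cl(R))=D_{P^a}(\Cl(R))\leq D_P(\Cl(R))$, and again $0\leq\frac{a-1}{a}$.
\end{enumerate}
Thus the maximum over $i$ collapses to the single term $i=a-1$, and together with $\frac{D(\Cl(R))}{2}$ this yields the stated formula.

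For the closing remarks, I would note that each class $[P^i]=[P]^i$ lies in the cyclic subgroup generated by $[P]$ in $\Cl(\Rbar)\cong\quot{\Cl(R)}{\ker(\tau)}$. Therefore the coset for $P^i$ lies in $\gen{\alpha}$, where $\alpha$ is the coset for $P$. Conjecture \ref{larger d for generator} (with $H=\ker(\tau)$) then gives $d_\alpha\geq d_{\beta}$, and adding $1$ gives $D_P\geq D_{P^i}$ by the identity $D_S(G)=d_S(G)+1$. When $\Cl(R)$ is cyclic, Corollary \ref{larger relative dav for cyclic generator} supplies this unconditionally.

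The only delicate point is the $i=0$ term, which needs the hypothesis applied to $i=a$ (or the trivial case $a=1$). A second subtlety is interpreting $D_P$ via an ideal in the class of $P$ coprime to $I$; I would handle it exactly as in the Remark defining $D_J(\Cl(R))$.
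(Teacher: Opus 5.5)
Your proposal is correct and follows the paper's route. The paper likewise uses Corollary \ref{negative automorphism}, via $[P^{a-1}]=[P]^{-1}$ in $\Cl(\Rbar)$ (since $P^a=\pi\Rbar$), to identify the $i=a-1$ term of Theorem \ref{elasticity of principal order} with $\frac{D_P(\Cl(R))+1}{2}+\frac{a-1}{a}$, and then lets the hypothesis and the monotonicity of $\frac{i}{a}$ absorb the remaining terms; your handling of the $i=0$ term via $i=a$ (or $a=1$) is a detail the paper leaves implicit.
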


To illustrate the importance of the condition that $I$ is a prime ideal of $R$, we provide an illustrative example that demonstrates the failure of the theorem when $R\cap P\neq I$.

\begin{example}
    \label{orders with different elasticity}
    Let $K=\bQ(\sqrt{2})$, $R_1=\bZ[2\sqrt{2}]$, and $R_2=\bZ[9\sqrt{2}]$. The conductor ideals of these orders are $I_1=(R_1:\cO_K)=(2)=(\sqrt{2})^2$ and $I_2=(R_2:\cO_K)=(9)=(3)^2$, both of which are squares of prime ideals. $\Cl(\cO_K)$ is trivial (i.e., $\cO_K$ is a PID) and $R_1$ and $R_2$ are both locally associated orders (see the table at \cite{quadla}), so $\Cl(R_1)\cong \Cl(\cO_K)\cong \Cl(R_2)$. Now since $R_1\cap (\sqrt{2})=I_1$, we can use Corollary \ref{pid} to conclude that $\rho(R_1)=\frac{3}{2}$. On the other hand, note that in $R_2$, $(9+9\sqrt{2})(9-9\sqrt{2})=-3^4$, with each factor being irreducible. Then $\rho(R_2)\geq \frac{4}{2}=2>\rho(R_1)$ (and in fact, $\rho(R_2)=2$; this is shown later in Example \ref{return to quadratic example}).
\end{example}

We conclude this section with an additional example of an elasticity calculation when the ring of integers is not a PID.

\begin{example}
    \label{more complex example}
    Let $K=\bQ[\alpha]$, where $\alpha$ is a root of $f(x)=x^4-x^3+5x^2-2x+1$. Using the database at \cite{lmfdb} and SageMath, we note the following:
    \begin{itemize}
        \item $\Cl(\Rbar)\cong \bZ_2$.
        \item $3\Rbar=P^2$, where $P=(3,-1+6\alpha-\alpha^2+\alpha^3)$ is a non-principal prime ideal.
        \item Letting $I=3\Rbar$ and $R=\bZ+I$, we have that $R$ is an order in $K$ with conductor ideal $I$ and $R\cap P=I$.
        \item $\abs{U(\quot{\Rbar}{I})}=72$ and $\abs{U(\quot{R}{I})}=2$.
        \item $U(\Rbar)=\{\pm\alpha^k|k\in\bZ\}$ and $\alpha^{12}$ is the first power of $\alpha$ lying in $R$, so $\abs{\quot{U(\Rbar)}{U(R)}}=12$.
    \end{itemize}
    Therefore,
    $$\abs{\Cl(R)}=\abs{\Cl(\Rbar)}\cdot\frac{\abs{U(\quot{\Rbar}{I})}}{\abs{U(\quot{R}{I})}\cdot\abs{\quot{U(\Rbar)}{U(R)}}}=2\cdot\frac{72}{2\cdot{12}}=6.$$
    Since $\Cl(R)$ must be abelian, then $\Cl(R)\cong \bZ_6$. In this copy of $\bZ_6$, $\ker(\tau)$ corresponds to the set $\{0, 2, 4\}$; since $P$ is non-principal in $\Rbar$, then by Corollary \ref{simpler assuming conjecture}, we have:
    $$\rho(R)=\max\left\{\frac{D(\Cl(R))}{2},\frac{D_P(\Cl(R))+1}{2}+\frac{1}{2}\right\}=\max\left\{\frac{6}{2},\frac{7}{2}+\frac{1}{2}\right\}=4.$$
\end{example}

\section{Orders with $\rho(R)=\rho(\Rbar)$}
Before continuing in our efforts to find the elasticities of additional orders, we take a brief diversion to refine old results and answer old conjectures. Of particular interest in this section will be orders $R$ for which $\rho(R)=\rho(\Rbar)$. In \cite[Theorem 3.4]{radicalconductor}, it was shown that $\rho(R)\geq \rho(\Rbar)$ for any order $R$. Moreover, it was shown in \cite[Theorem 1.1]{rago} that, for $R$ to be an HFD (i.e., for $\rho(R)=\rho(\Rbar)=1$), it is necessary that $R$ is an associated order. It was conjectured in \cite[Conjecture 5.2.8]{dissertation} that $\rho(R)=\rho(\Rbar)$ would imply that $R$ is associated more generally. Though we will show later in this section that this conjecture fails in general, it turns out that $\rho(R)=\rho(\Rbar)$ does imply that $R$ is a locally associated order (see \cite{subringrelations} for full definitions and characterizations).

\begin{theorem}
    \label{equal elasticity implies la}
    Let $R$ be an order in a number field $K$. If $\rho(R)=\rho(\Rbar)$, then $R$ is a locally associated order.
\end{theorem}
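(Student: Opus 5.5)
We argue by contrapositive: assuming $R$ is not locally associated, we produce an element of $R$ whose elasticity exceeds $\rho(\Rbar)$. If $\rho(R)=\infty$ there is nothing to show, since $\rho(\Rbar)=D(\Cl(\Rbar))/2$ (or $1$) is finite. So by Proposition \ref{infinite elasticity} we may assume that every prime of $R$ has a unique prime of $\Rbar$ lying over it. The first step is to translate \enquote{not locally associated} into a statement about class groups. I intend to use the characterization in \cite{subringrelations}, which should say that, under the unique-lying-over hypothesis, $R$ is locally associated exactly when the natural surjection $\tau:\Cl(R)\to\Cl(\Rbar)$ is injective. Roughly, this is because $|\ker(\tau)|=\abs{U(\Rbar/I)}/(\abs{U(R/I)}\cdot\abs{U(\Rbar)/U(R)})$, and the local association condition is what forces this quotient to be $1$. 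Thus, if $R$ is not locally associated, we expect $H:=\ker(\tau)$ to be a nontrivial subgroup of $G:=\Cl(R)$, with $G/H\cong\Cl(\Rbar)$.

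The second step is the group-theoretic inequality $D(G)\geq D(G/H)+D(H)-1$, which gives $D(G)>D(G/H)$ when $H\neq 0$. To prove it, take a zero-sum free sequence $g_1',\dots,g_r'$ in $G/H$ of length $D(G/H)-1$ and lift it to $g_1,\dots,g_r\in G$. Take a zero-sum free sequence $h_1,\dots,h_s$ in $H$ of length $D(H)-1\geq 1$. Then let $x\in H$ be chosen so that appending it, together with $-(g_1+\dots+g_r)-\sum h_i$, produces a zero-sum sequence with no proper zero-sum subsequence. Checking the absence of proper zero-sum subsequences is routine: project to $G/H$ to see that any zero-sum subsequence uses all or none of the $g_i$. (Alternatively, Theorem \ref{d in quotients} applied with $S'=G/H$ gives only $D(G)\geq D(G/H)$, which is not enough, so the strict version is needed.)

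The third step passes to elasticities. If $\Rbar$ is not a UFD, then $D(\Cl(\Rbar))\geq 2$. By Lemma \ref{elasticity relatively prime to I} we get
$$\rho(R)\geq\frac{D(\Cl(R))}{2}>\frac{D(\Cl(\Rbar))}{2}=\rho(\Rbar).$$
If $\Rbar$ is a UFD, then $\rho(\Rbar)=1$, and equality would make $R$ an HFD. By \cite[Theorem 1.1]{rago} $R$ would then be associated, hence in particular locally associated, a contradiction. We can also see this directly: a nonzero $g\in H$ yields primes $Q\in g$ and $Q'\in -g$ of $R$ coprime to $I$, and the sequence $\{g,\dots,g,-g,\dots,-g\}$ of length $2|g|$ gives an element with two factorizations of lengths $2$ and $|g|\geq 2$, so $\rho(R)\geq |g|/2\cdot$ (adjusted for $|g|=2$ via products of three classes). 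The cleaner route is the citation.

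The main obstacle is the first step: pinning down exactly which characterization of \enquote{locally associated} from \cite{subringrelations} to invoke, and verifying that its failure (given finite elasticity) really forces $\ker(\tau)\neq 0$. If that characterization is instead phrased locally, at each prime $\mathfrak p\supseteq I$, as $U(\Rbar_{\mathfrak p})\cdot R_{\mathfrak p}\neq\Rbar_{\mathfrak p}$, I would first try to produce from such a local failure a nonprincipal invertible ideal of $R$ that becomes principal in $\Rbar$. I would do this via the exact sequence $U(\Rbar)\to U(\Rbar/I)/U(R/I)\to\Cl(R)\to\Cl(\Rbar)\to 0$, and expect to need an extra argument to rule out the case where global units of $\Rbar$ kill the local obstruction.
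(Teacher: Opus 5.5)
Your proposal is correct and is essentially the paper's argument. The paper also combines Lemma \ref{elasticity relatively prime to I}, giving $\rho(\Rbar)=D(\Cl(\Rbar))/2\le D(\Cl(R))/2\le\rho(R)$, with the fact that a nontrivial $\ker(\tau)$ strictly increases the Davenport constant: it cites \cite[Lemma 3.3]{radicalconductor}, where you prove $D(G)\ge D(G/H)+D(H)-1$. It reads $\Cl(R)\cong\Cl(\Rbar)$ as \emph{locally associated} exactly as you intend, and handles $\rho(\Rbar)=1$ via \cite[Theorem 1.1]{rago}. The one slip is in your Step 2, where the stray $x\in H$ should be dropped: by your projection argument $g_1,\dots,g_r,h_1,\dots,h_s$ is already zero-sum free, and appending the negative of its sum gives the required minimal zero-sum sequence.
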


\begin{proof}
    First, note that if $\rho(R)=\rho(\Rbar)=1$, then it is already known that $R$ must be an associated (and thus locally associated) order. Then assume that $\rho(R)=\rho(\Rbar)>1$. In this case, Lemma \ref{elasticity relatively prime to I} gives us:
    $$\rho(\Rbar)=\frac{D(\Cl(\Rbar))}{2}\leq \frac{D(\Cl(R))}{2}\leq \rho(R)=\rho(\Rbar).$$
    Then $D(\Cl(R))=D(\Cl(\Rbar))$; by \cite[Lemma 3.3]{radicalconductor}, it follows that $\Cl(R)\cong\Cl(\Rbar)$.
\end{proof}

As an interesting corollary, we can actually use this fact to show the converse of \cite[Theorem 3.8]{radicalconductor}.

\begin{corollary}
    \label{radical ideal converse}
    Let $R$ be an order in a number field $K$ whose conductor ideal $I$ is a radical ideal of $\Rbar$. Then $\rho(R)=\rho(\Rbar)$ if and only if $R$ is an associated order.
\end{corollary}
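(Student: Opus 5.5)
We prove the two implications separately, using Theorem \ref{equal elasticity implies la} for the forward direction and the earlier result \cite[Theorem 3.8]{radicalconductor} for the reverse.

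\emph{Reverse direction.} Suppose $R$ is an associated order whose conductor $I$ is radical in $\Rbar$. Then $\rho(R)=\rho(\Rbar)$ is exactly the content of \cite[Theorem 3.8]{radicalconductor}, so nothing new is needed.

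\emph{Forward direction.} Assume $\rho(R)=\rho(\Rbar)$ and $I$ is radical in $\Rbar$.

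\emph{Step 1.} By Theorem \ref{equal elasticity implies la}, $R$ is locally associated. In the case $\rho=1$ we already know $R$ is associated, so we may assume $\rho(R)>1$.

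\emph{Step 2.} Write $I=P_1\cdots P_k$ as a product of distinct primes of $\Rbar$. Finite elasticity is forced by $\rho(R)=\rho(\Rbar)<\infty$. By Proposition \ref{infinite elasticity}, each prime of $R$ containing $I$ then has a unique prime of $\Rbar$ above it. The $R\cap P_i$ are primes of $R$ containing $I$. I expect that radicality of $I$, together with $R/I\hookrightarrow \Rbar/I\cong\prod \Rbar/P_i$, lets us identify $R/I$ with a product of subfields of the residue fields $\Rbar/P_i$.

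\emph{Step 3.} This step is where I expect the real obstacle. We must upgrade "locally associated" to "associated" under the radical hypothesis. Recall from \cite{subringrelations} that locally associated means $\Cl(R)\cong\Cl(\Rbar)$. Equivalently, by the class number formula used in Example \ref{more complex example}, this means
$$|U(\Rbar/I)|=|U(R/I)|\cdot|U(\Rbar)/U(R)|.$$
Associated means the stronger condition that $U(\Rbar)$ surjects onto $U(\Rbar/I)/U(R/I)$. I would first check, via the definitions in \cite{subringrelations}, whether the two notions coincide when $\Rbar/I$ is reduced. Concretely, I would use the fact that both conditions concern the local pieces $\Rbar_{P_i}/P_i$ over the corresponding residue fields of $R$.

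\emph{Fallback for Step 3.} If that check does not go through, the plan is to build an element of large elasticity directly. Take a unit class of $\Rbar/I$ not coming from $U(\Rbar)U(R/I)$. Realize it by an element of $\Rbar$ generating an ideal $P_i Q$, and combine it with conjugate-type elements as in the proof of Lemma \ref{prime factors of irreducible}. The goal is an irreducible in $R$ whose $\Rbar$-factorization is too long. That would give $\rho(R)>D(\Cl(\Rbar))/2=\rho(\Rbar)$, contradicting the hypothesis.
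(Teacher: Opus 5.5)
Your reverse direction and Steps 1--2 follow the paper's route. The paper also gets local associatedness from Theorem \ref{equal elasticity implies la} and uses Proposition \ref{infinite elasticity} to get $R\cap P\nsubseteq Q$ for distinct primes $P,Q$ of $\Rbar$ containing $I$. It then observes that, with $I$ radical, this makes $R$ ideal-preserving. That is exactly your decomposition $R/I\cong\prod_i k_i$ with $k_i\subseteq\Rbar/P_i$. It follows from the Chinese Remainder Theorem, since the $R\cap P_i$ are then distinct maximal ideals of $R$ with intersection $R\cap I=I$. The paper finishes by citing \cite[Corollary 4.13]{subringrelations}: ideal-preserving and locally associated together imply associated.

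That implication is your Step 3, and it is the gap: you do not prove it, and your framing of it is wrong.
\begin{itemize}
\item The induced map $U(\Rbar)/U(R)\to U(\Rbar/I)/U(R/I)$ is always injective: if $u\in U(\Rbar)$ has residue in $U(R/I)$, then $u,u^{-1}\in R+I=R$.
\item So surjectivity of $U(\Rbar)$ onto $U(\Rbar/I)/U(R/I)$ is equivalent to your cardinality identity, i.e.\ to being locally associated. It is not the stronger condition.
\item What being associated ($\Rbar=R\cdot U(\Rbar)$) really adds is control of elements of $\Rbar$ whose residues mod $I$ are zero divisors, i.e.\ elements lying in some but not all of the $P_i$.
\end{itemize}
Reducedness of $\Rbar/I$ alone does not supply this. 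For $R=\bZ[\sqrt{-7}]$, the conductor $2\Rbar$ is a product of two distinct primes and $|\Cl(R)|=1=|\Cl(\Rbar)|$. Yet $\frac{1+\sqrt{-7}}{2}$ is not a unit multiple of any element of $R$. So the check you propose fails unless Step 2 is fed in. Your fallback is vacuous: under local associatedness there is no unit class of $\Rbar/I$ outside the image of $U(\Rbar)U(R/I)$ to start from.

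The missing argument is short once Steps 1 and 2 are combined. Given $\alpha\in\Rbar$ with residue $(a_i)\in\prod_i\Rbar/P_i$, put $b_i=a_i^{-1}$ if $a_i\neq0$ and $b_i=1$ otherwise. Surjectivity gives $u\in U(\Rbar)$ whose residue lies in $(b_i)\cdot\prod_i k_i^{\times}$. Then each coordinate of the residue of $u\alpha$ lies in $k_i$ (zero coordinates trivially), so $u\alpha\in R+I=R$. This is where the finite-elasticity information from Step 2 is genuinely used. Without it, as in $\bZ[\sqrt{-7}]$ where $R\cap P=R\cap\overline{P}$, both the argument and the conclusion break down.
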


\begin{proof}
    The proof that $\rho(R)=\rho(\Rbar)$ when $R$ is an associated order is handled in \cite[Theorem 3.8]{radicalconductor}. We will show the converse. Since $\rho(R)=\rho(\Rbar)$, then Theorem \ref{equal elasticity implies la} tells us that $R$ must be locally associated. Furthermore, since $\rho(R)=\rho(\Rbar)<\infty$, then Proposition \ref{infinite elasticity} tells us that for any two prime ideals $P$ and $Q$ dividing $I$, $R\cap P\nsubseteq Q$. Since $I$ is radical, this is sufficient to show that $R$ is ideal-preserving. Since $R$ is both ideal-preserving and locally associated, \cite[Corollary 4.13]{subringrelations} tells us that $R$ is an associated order.
\end{proof}

We now use Theorem \ref{elasticity of principal order} to answer \cite[Conjecture 5.2.8]{dissertation} in the negative.

\begin{proposition}
    \label{conjecture counterexample}
    Let $R$ be a locally associated order in a number field $K$ with principal conductor ideal $I=P^a$, where $P=\pi\Rbar$ is principal and prime as an ideal of $\Rbar$ and $a\geq 2$. Also assume that $R\cap P=I$ and $\rho(\Rbar)\geq\frac{2a-1}{a}$. Then $\rho(R)=\rho(\Rbar)$, but $R$ is not an associated order.
\end{proposition}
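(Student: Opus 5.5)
Since $R$ is locally associated, $\Cl(R)\cong\Cl(\Rbar)$; more precisely, the natural map $\tau:\Cl(R)\to\Cl(\Rbar)$ is an isomorphism, so $\ker(\tau)$ is trivial. The plan is to evaluate the maximum in Theorem \ref{elasticity of principal order} directly, show it equals $\rho(\Rbar)$, and then show separately that $R$ cannot be associated.

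First check that Theorem \ref{elasticity of principal order} applies. We have $I=P^a=\pi^a\Rbar$, which is principal and primary in $\Rbar$ and generated by $\pi^a$. This generator lies in $I\subseteq R$. The hypothesis $R\cap P=I$ together with Proposition \ref{prime ideal of R} shows that $I$ is prime in $R$.

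Next compute each term of the maximum. Since $P$ is principal, $[P^i]=0$ in $\Cl(\Rbar)$ for every $i$. The set $S=\tau^{-1}(\{[P^i]\})$ is then $\ker(\tau)=\{0\}$, so $D_{P^i}(\Cl(R))=D_{\{0\}}(\Cl(R))=d_{\{0\}}(\Cl(R))+1=1$. Each term $\frac{D_{P^i}(\Cl(R))+1}{2}+\frac{i}{a}$ therefore equals $1+\frac ia$, and the largest is $1+\frac{a-1}{a}=\frac{2a-1}{a}$. Using $D(\Cl(R))=D(\Cl(\Rbar))$, we get
$$\rho(R)=\max\left\{\frac{D(\Cl(\Rbar))}{2},\frac{2a-1}{a}\right\}.$$

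The step needing care is identifying $\frac{D(\Cl(\Rbar))}{2}$ with $\rho(\Rbar)$. The hypothesis $\rho(\Rbar)\geq\frac{2a-1}{a}>1$ (using $a\geq2$) shows $\Rbar$ is not a UFD. Narkiewicz's theorem then gives $\rho(\Rbar)=D(\Cl(\Rbar))/2$, and this is at least $\frac{2a-1}{a}$, so the maximum is $\rho(\Rbar)$. If $\Cl(\Rbar)$ were trivial we would have $\rho(\Rbar)=1<\frac{2a-1}{a}$, so the hypothesis rules this case out.

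Finally, $R$ is not associated. If it were, every unit of $\Rbar$ would be associated to one in $R$, and every element of $\Rbar$ would be an $\Rbar$-unit multiple of an element of $R$. Apply this to the prime element $\pi$: we would get $u\pi\in R$ with $u\in U(\Rbar)$. But $u\pi\in P$, and $u\pi\notin P^2$ because $u\pi\Rbar=P$. Then $u\pi\in(R\cap P)\setminus P^a=(R\cap P)\setminus I$ since $a\geq2$, contradicting $R\cap P=I$. Here I take "associated order" to mean that every element (equivalently, every principal ideal) of $\Rbar$ has a generator lying in $R$, which is the paper's usage. The main obstacle I anticipate is justifying $D_{\{0\}}=1$ through the shorthand convention for $D_{P^i}$ when $\ker(\tau)$ is trivial, and this follows directly from the earlier lemma that $d_S(G)=0$ exactly when $S=\{0\}$.
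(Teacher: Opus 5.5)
Your proof is correct and follows essentially the same route as the paper. You evaluate the maximum in Theorem \ref{elasticity of principal order} using $\ker(\tau)=\{0\}$, so that $D_{P^i}(\Cl(R))=1$, and then identify $D(\Cl(\Rbar))/2$ with $\rho(\Rbar)$; the paper cites Corollary \ref{simpler assuming conjecture} instead, which is the same computation, and your direct argument that $u\pi\in(R\cap P)\setminus I$ simply unwinds the paper's one-line remark that $R\cap P=P^a\neq P$ makes $R$ fail to be ideal-preserving, hence not associated.
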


\begin{proof}
    First, note that since $R\cap P=I=P^a\neq P$, then $R$ is by definition not an ideal-preserving order; then $R$ is not associated. Since $P$ is principal, then we can conclude by Corollary \ref{simpler assuming conjecture} that
    $$\rho(R)=\max\left\{\frac{D(\Cl(R))}{2},\frac{D_{\Rbar}(\Cl(R))+1}{2}+\frac{a-1}{a}\right\}.$$
    Since $R$ is locally associated, then $\Cl(R)\cong\Cl(\Rbar)$; therefore, $D(\Cl(R))=D(\Cl(\Rbar))$ and $D_{\Rbar}(\Cl(R))=1$. Then
    $$\rho(R)=\max\left\{\rho(\Rbar),\frac{2a-1}{a}\right\}=\rho(\Rbar).$$
\end{proof}

\begin{example}
    Let $R=\bZ[79\sqrt{79}]$ be the index $79$ order in the quadratic number field $\bQ[\sqrt{79}]$. In this case, $\Rbar=\bZ[\sqrt{79}]$ and $I=(79)=(\sqrt{79})^2$. Using the table at \cite{quadla}, we can verify that $R$ is locally associated but not ideal-preserving, so $R\cap (\sqrt{79})=I$. Since $\rho(\Rbar)=\frac{3}{2}$, then Proposition \ref{conjecture counterexample} tells us that $\rho(R)=\rho(\Rbar)=\frac{3}{2}$.
\end{example}

\section{Elasticity of Orders with Primary Conductor in Quadratic Fields}

The results in Section 3 certainly allow us to find the elasticity of orders that were previously unapproachable, but the restrictions on the conductor ideal (that $I$ is a prime ideal of $R$ which is principal in $\Rbar$) serve to somewhat limit their scope. However, when we restrict to the realm of orders in quadratic number fields, we can relax these conditions somewhat. Not only will every order automatically have a conductor ideal which is principal in $\Rbar$ (generated by an integer), the order will actually be uniquely determined by its conductor ideal. We will also be able to take advantage of the fact that $R\cap P$ is very predictable. Moreover, as we will see, orders in $\Rbar$ lying between $R$ and $\Rbar$ will have nice properties which we can utilize. Ultimately, this will allow us to find the elasticity of any order of index $p^k$ for prime $p$ and $k\in\bN$ in any quadratic number field $K=\bQ[\sqrt{d}]$.

As before, we begin by compiling lemmata of which we will make frequent use that apply specifically to the quadratic case. Throughout this section, $K$ will refer to a quadratic number field $\bQ[\sqrt{d}]$ for some squarefree $d\in\bZ$, and $R_n=\bZ[n\alpha]$ will refer to the unique order of index $n$ in $K$, where $\alpha=\frac{1+\sqrt{d}}{2}$ if $d\equiv 1\modulo{4}$ and $\alpha=\sqrt{d}$ otherwise. For ease of notation when working with multiple orders within the same number field, we will still use the notation $\Rbar$ to refer to the integral closure of $R_n$, i.e., $\Rbar=R_1=\bZ[\alpha]$.

\begin{lemma}
    \label{R intersect P}
    Let $R_n$ be the index $n$ order in the quadratic number field $K=\bQ[\sqrt{d}]$, $p\in\bZ$ a prime dividing $n$, and $P$ any prime ideal of $\Rbar$ lying over $p$. Then:
    \begin{enumerate}
        \item if $p$ is inert or split in $K$ and $p^k|n$, then $R_n\cap P^k=p^k\bZ+n\alpha\bZ$;
        \item if $p$ is ramified in $K$ and $p^k|n$, then $R_n\cap P^{2k-1}=R_n\cap P^{2k}=p^k\bZ+n\alpha\bZ$.
    \end{enumerate}
\end{lemma}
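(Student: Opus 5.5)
We will compute $R_n\cap P^k$ by working entirely with $\bZ$-bases. Every element of $R_n=\bZ[n\alpha]$ can be written uniquely as $x+yn\alpha$ with $x,y\in\bZ$. Since $p^k\mid n$, the term $yn\alpha$ lies in $p^k\Rbar$, and $p^k\Rbar\subseteq P^k$ in every case: $p\Rbar\subseteq P$, and in the ramified case $p\Rbar=P^2$, so $p^k\Rbar=P^{2k}\subseteq P^{2k-1}$. Consequently $x+yn\alpha$ lies in $P^k$ (respectively $P^{2k}$ or $P^{2k-1}$) if and only if the integer $x$ does. The whole problem therefore reduces to computing $\bZ\cap P^k$, $\bZ\cap P^{2k-1}$ and $\bZ\cap P^{2k}$.

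\textbf{Inert or split case.} Here $p$ is unramified, so $v_P(x)=v_p(x)$ for $x\in\bZ$ (the ramification index is $1$). Hence $x\in P^k$ if and only if $p^k\mid x$, that is, $\bZ\cap P^k=p^k\bZ$. Therefore $R_n\cap P^k=p^k\bZ+n\alpha\bZ$. The containment $\supseteq$ is clear, since $p^k\in P^k$ and $n\alpha\in p^k\Rbar\subseteq P^k$.

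\textbf{Ramified case.} Now $p\Rbar=P^2$, so $v_P(x)=2v_p(x)$ for $x\in\bZ$. Thus $v_P(x)\ge 2k-1$ forces $v_p(x)\ge k$, because $v_P(x)$ is even, and then $v_P(x)\ge 2k$. Hence
$$\bZ\cap P^{2k-1}=\bZ\cap P^{2k}=p^k\bZ.$$
Combining this with the reduction above gives
$$R_n\cap P^{2k-1}=R_n\cap P^{2k}=p^k\bZ+n\alpha\bZ.$$

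The only step needing care is the reduction, namely checking that $n\alpha\in P^{2k}$ in the ramified case. This follows from $n\alpha\in p^k\Rbar=P^{2k}$. Everything else is the valuation computation on rational integers.
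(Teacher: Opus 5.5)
Your proposal is correct and follows essentially the same route as the paper: you write elements of $R_n$ as $x+yn\alpha$, note that $yn\alpha\in p^k\Rbar$ lies in the relevant power of $P$, and reduce to computing $\bZ\cap P^k$ (respectively $\bZ\cap P^{2k-1}$). The only difference is that you justify $\bZ\cap P^k=p^k\bZ$ and $\bZ\cap P^{2k-1}=p^k\bZ$ via the ramification index ($v_P=v_p$ or $v_P=2v_p$ on $\bZ$), whereas the paper simply asserts these equalities.
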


\begin{proof}
    Let $k\in\bN$ such that $p^k|n$, and let $I_k:=p^k\bZ+n\alpha\bZ$. Since $R_n=\bZ[n\alpha]=\bZ+n\alpha\bZ$, then $I_k\subseteq R_n$ immediately. If $p$ is either inert or split in $K$, then $I_k\subseteq p^k\Rbar\subseteq P^k$, so $I_k\subseteq R_n\cap P^k$. If $p$ is ramified, then $I_k\subseteq (p^k\Rbar)\subseteq  P^{2k}\subseteq P^{2k-1}$, so $I_k\subseteq R_n\cap P^{2k}\subseteq R_n\cap P^{2k-1}$. We must now show the reverse inclusions.

    Assume that $p$ is either inert or split in $K$, and let $\beta=a+b\alpha\in R_n\cap P^k$. Since $\beta\in R_n$, then $n|b$; since $p^k|n$ and $p^k\in P^k$, this means that $p^k|b$ as well, so $b\alpha\in P^k$. Then $a=\beta-b\alpha\in \bZ\cap P^k=p^k\bZ$, so $\beta\in I_k$. Therefore, $R_n\cap P^k\subseteq I_k$. Similarly, if $p$ is ramified, we let $\beta=a+b\alpha\in R_n\cap P^{2k-1}$. Since $\beta\in R_n$, we follow a similar logic as above to conclude that $a\in\bZ\cap P^{2k-1}=p^k\bZ$. Thus, $\beta\in I_k$, so $R_n\cap P^{2k}\subseteq R_n\cap P^{2k-1}\subseteq I_k\subseteq R_n\cap P^{2k}$.
\end{proof}

This lemma, along with Proposition \ref{infinite elasticity}, allows us to immediately narrow our focus to orders whose index has no split prime factors.

\begin{proposition}
    Let $R_n$ be the index $n$ order in the quadratic number field $K=\bQ[\sqrt{d}]$. Then $R_n$ has infinite elasticity if and only if there exists a prime $p\in\bZ$ which divides $n$ and splits in $K$.
\end{proposition}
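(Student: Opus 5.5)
The plan is to apply Proposition \ref{infinite elasticity}. By the note following it, it suffices to examine the prime ideals of $R_n$ that contain the conductor $I=n\Rbar$. Every such prime is of the form $R_n\cap P$ for a prime $P$ of $\Rbar$ lying over some rational prime $p\mid n$, because $I=n\Rbar$ is contained only in primes of $\Rbar$ above primes dividing $n$. The criterion is then whether some prime $Q$ of $R_n$ lies under more than one prime of $\Rbar$. Since $R_n\cap\bZ=\bZ$ and $Q\cap\bZ=p\bZ$, any prime of $\Rbar$ lying over $Q$ must lie over $p$. So the question reduces to whether the distinct primes of $\Rbar$ above a given $p\mid n$ have the same contraction to $R_n$.

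If $p$ is inert or ramified, there is exactly one prime $P$ of $\Rbar$ above $p$. Hence every prime of $R_n$ over $p$ has a unique prime of $\Rbar$ above it. If no prime dividing $n$ splits, then every prime of $R_n$ containing $I$ has exactly one prime of $\Rbar$ lying over it. Part (2) of Proposition \ref{infinite elasticity} then gives finite elasticity.

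Conversely, suppose $p\mid n$ splits, with $p\Rbar=PP'$ and $P\neq P'$. Apply Lemma \ref{R intersect P}(1) with $k=1$ to both $P$ and $P'$. This gives $R_n\cap P=p\bZ+n\alpha\bZ=R_n\cap P'$. So the single prime $Q:=p\bZ+n\alpha\bZ$ of $R_n$ has the two distinct primes $P$ and $P'$ of $\Rbar$ lying over it. Part (1) of Proposition \ref{infinite elasticity} then gives $\rho(R_n)=\infty$.

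The only point needing care is the reduction to primes containing the conductor, which is exactly the remark after Proposition \ref{infinite elasticity}. For completeness, the primes of $R_n$ not containing $I$ are handled by Lemma \ref{go up or go down}: for such a prime $J$, $J\Rbar$ is the unique prime of $\Rbar$ over $J$. Everything else is a direct application of Lemma \ref{R intersect P}, so I expect no real obstacle.
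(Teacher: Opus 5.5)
Your proof is correct and follows the paper's argument. In the split case you use Lemma \ref{R intersect P}(1) with $k=1$ to show that both primes above $p$ contract to $p\bZ+n\alpha\bZ$, and in the non-split case you invoke part (2) of Proposition \ref{infinite elasticity}. The difference is only cosmetic: for the converse you note directly that a non-split $p\mid n$ has a single prime of $\Rbar$ above it, while the paper says distinct primes dividing $I$ have distinct contractions via Lemma \ref{R intersect P}, and these two formulations are equivalent.
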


\begin{proof}
    Assume that $p|n$ for some split prime $p$. Then there are two distinct prime ideals $P$ and $Q$ of $\Rbar$ which lie over $p$. By Lemma \ref{R intersect P}, we have $R_n\cap P=p\bZ+n\alpha\bZ=R_n\cap Q$, and thus $R_n$ has infinite elasticity by Proposition \ref{infinite elasticity}. 
    
    Otherwise, for any two distinct primes $P$ and $Q$ dividing $I=n\Rbar$, $P$ and $Q$ lie over distinct primes $p$ and $q$ of $\bZ$. Using Lemma \ref{R intersect P}, $R_n\cap P=p\bZ+n\alpha\bZ\neq q\bZ+n\alpha\bZ=R_n\cap Q$. Again using Proposition \ref{infinite elasticity}, we conclude that $R_n$ has finite elasticity.
\end{proof}

The following lemma will allow us to make use of another feature of quadratic orders: the predictability of which elements conduct others into the order.

\begin{lemma}
    \label{conducts m}
    Let $R_n$ be the index $n$ order in the quadratic number field $K=\bQ[\sqrt{d}]$, and let $n=mk$ for some $m,k\in\bN$. Then:
    \begin{enumerate}
        \item $R_n+m\Rbar=R_m$;
        \item The set of elements in $\Rbar$ conducting $m$ into $R_n$ is exactly $R_k$; that is,
        $$R_k=\{\beta\in\Rbar|m\beta\in R_n\}.$$
    \end{enumerate}
\end{lemma}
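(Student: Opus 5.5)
Both parts reduce to elementary coordinate computations in the $\bZ$-basis $\{1,\alpha\}$ of $\Rbar$. Recall that $R_j=\bZ+j\alpha\bZ$ for every $j\in\bN$. Consequently an element $\beta=x+y\alpha\in\Rbar$, with $x,y\in\bZ$, lies in $R_j$ if and only if $j\mid y$.

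For part (1), I will verify two inclusions. First, $R_n=\bZ+mk\alpha\bZ\subseteq\bZ+m\alpha\bZ=R_m$. Second, $m\Rbar=m\bZ+m\alpha\bZ\subseteq R_m$. Since $R_m$ is closed under addition, these give $R_n+m\Rbar\subseteq R_m$. For the reverse inclusion, take a general element $x+my\alpha\in R_m$. It decomposes as $x+my\alpha=x+m(y\alpha)$, where $x\in\bZ\subseteq R_n$ and $m(y\alpha)\in m\Rbar$. Hence $R_m\subseteq R_n+m\Rbar$, and the two sets are equal.

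For part (2), let $\beta=x+y\alpha\in\Rbar$. Then $m\beta=mx+my\alpha$, and by the membership criterion above, $m\beta\in R_n$ if and only if $n=mk$ divides $my$. Cancelling $m$, this holds if and only if $k\mid y$, which is exactly the condition $\beta\in R_k$. Since this is an equivalence for every $\beta\in\Rbar$, it gives $\{\beta\in\Rbar\mid m\beta\in R_n\}=R_k$.

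No step here is a real obstacle. The only point needing care is the membership criterion: the coordinates of an element of $\Rbar$ with respect to $\{1,\alpha\}$ are unique, so $x+y\alpha\in R_j$ forces $j\mid y$. This follows because $\{1,\alpha\}$ is a $\bZ$-basis of $\Rbar$ and $R_j=\bZ[j\alpha]=\bZ+j\alpha\bZ$. That identity in turn holds since $\alpha$ satisfies a monic quadratic over $\bZ$, so $(j\alpha)^2\in\bZ+j\alpha\bZ$.
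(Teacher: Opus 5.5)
Your proposal is correct and takes essentially the same approach as the paper. Part (2) is the identical coordinate computation ($m\beta\in R_n$ iff $mk\mid my$ iff $k\mid y$), and for part (1) the paper observes $R_n=\bZ+n\Rbar$, so that $R_n+m\Rbar=\bZ+n\Rbar+m\Rbar=\bZ+m\Rbar=R_m$, which is a compressed form of your two inclusions.
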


\begin{proof}
    Since $R_n=\bZ[n\alpha]=\bZ+n\alpha\bZ$ and $\Rbar=\bZ[\alpha]=\bZ+\alpha\bZ$, then note that $R_n=\bZ+n\Rbar$. Thus, $R_n+m\Rbar=\bZ+n\Rbar+m\Rbar=\bZ+m\Rbar=R_m$. This completes the proof of (1).

    For (2), we note that for any $\beta=a+b\alpha\in \Rbar$, $m\beta=ma+mb\alpha\in R_n$ if and only if $n|mb$. Since $n=mk$, this holds if and only if $k|b$, which is true if and only if $\beta\in R_k$. Thus, $\beta$ conducts $m$ into $R_n$ if and only if $\beta\in R_k$.
\end{proof}

As we will see in the main theorem of this section and the corollaries that follow, finding the elasticity of a quadratic order can be much more complicated if Conjecture \ref{d in subgroups} fails. The following lemma allows us to still handle this case.

\begin{lemma}
    \label{same units not la}
    Let $R_n$ be the index $n$ order in the quadratic number field $K=\bQ[\sqrt{d}]$. Also assume that the prime (integer) divisors of $n$ are all either ramified or inert in $K$. Finally, let $b$ be a divisor of $n$ and $a$ be a proper divisor of $b$. If $U(R_a)=U(R_b)$, then $\abs{\Cl(R_a)}< \abs{\Cl(R_b)}$. In particular, if every unit in $\Rbar$ is contained in $R_b$, then $R_b$ is not a locally associated order.
\end{lemma}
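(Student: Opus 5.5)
The plan is to compare the two class numbers through the class number formula for orders, already used in Example \ref{more complex example}:
$$\abs{\Cl(R)}=\abs{\Cl(\Rbar)}\cdot\frac{\abs{U(\quot{\Rbar}{I})}}{\abs{U(\quot{R}{I})}\cdot\abs{\quot{U(\Rbar)}{U(R)}}}.$$
For the quadratic order $R_n$, the conductor is $I=n\Rbar$. Since $R_n=\bZ+n\Rbar$ (as in the proof of Lemma \ref{conducts m}), we have $\quot{R_n}{n\Rbar}\cong\quot{\bZ}{n\bZ}$, so $\abs{U(\quot{R_n}{I})}=\varphi(n)$. First note that $a\mid b$ gives $R_b\subseteq R_a$, and so $U(R_b)\subseteq U(R_a)\subseteq U(\Rbar)$. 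Under the hypothesis $U(R_a)=U(R_b)$ the unit indices $[U(\Rbar):U(R_a)]$ and $[U(\Rbar):U(R_b)]$ agree. Dividing the formula for $R_b$ by the formula for $R_a$, the factors $\abs{\Cl(\Rbar)}$ and the unit indices cancel, leaving
$$\frac{\abs{\Cl(R_b)}}{\abs{\Cl(R_a)}}=\frac{\abs{U(\quot{\Rbar}{b\Rbar})}/\varphi(b)}{\abs{U(\quot{\Rbar}{a\Rbar})}/\varphi(a)}.$$

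Next I would compute $f(n):=\abs{U(\quot{\Rbar}{n\Rbar})}/\varphi(n)$ for every $n$ whose prime divisors are inert or ramified. Using the Chinese Remainder Theorem, $\quot{\Rbar}{n\Rbar}\cong\prod_{p^e\| n}\quot{\Rbar}{p^e\Rbar}$, so it suffices to treat $n=p^e$.
\begin{itemize}
\item If $p$ is inert, $\quot{\Rbar}{p^e\Rbar}$ is local with residue field of size $p^2$ and total size $p^{2e}$. Hence $\abs{U}=p^{2e}(1-p^{-2})$ and $f(p^e)=p^e(1+1/p)$.
\item If $p$ is ramified, $p\Rbar=P^2$ with residue field of size $p$. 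Hence $\abs{U}=p^{2e}(1-p^{-1})$ and $f(p^e)=p^e$.
\end{itemize}
Combining the local factors gives $f(n)=n\prod_{p\mid n,\ p\text{ inert}}(1+1/p)$. Therefore
$$\frac{\abs{\Cl(R_b)}}{\abs{\Cl(R_a)}}=\frac{b}{a}\prod_{p\mid b,\ p\nmid a,\ p\text{ inert}}\left(1+\frac1p\right)\geq\frac ba>1,$$
because $a$ is a proper divisor of $b$. This proves $\abs{\Cl(R_a)}<\abs{\Cl(R_b)}$.

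The hypothesis that $n$ has no split prime divisors is exactly what makes every local factor at least $1$. For a split prime the factor would be $1-1/p<1$, and the inequality could fail.

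For the final claim, suppose every unit of $\Rbar$ lies in $R_b$. Then $U(R_1)=U(\Rbar)=U(R_b)$. We may take $b>1$; if $b=1$ there is no proper divisor and the statement is vacuous. Applying the first part with $a=1$, a proper divisor of $b$, gives $\abs{\Cl(\Rbar)}<\abs{\Cl(R_b)}$. Thus $\Cl(R_b)\not\cong\Cl(\Rbar)$, so $R_b$ is not locally associated, since locally associated orders satisfy $\Cl(R)\cong\Cl(\Rbar)$ (as used in Proposition \ref{conjecture counterexample} and Example \ref{orders with different elasticity}).

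The main obstacle I expect is the local computation of $\abs{U(\quot{\Rbar}{p^e\Rbar})}$ in the ramified case. One must check that $\quot{\Rbar}{p^e\Rbar}=\quot{\Rbar}{P^{2e}}$ is local with residue field $\mathbb{F}_p$. This holds because $P$ is the only maximal ideal containing $p^e\Rbar$, and so the units are exactly the complement of $\quot{P}{P^{2e}}$. The remaining steps are bookkeeping.
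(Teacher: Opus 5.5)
Your proof is correct and takes essentially the same route as the paper. Both arguments apply the class number formula of \cite[Theorem 12.12]{neukirch}, cancel the equal unit indices $[U(\Rbar):U(R_a)]=[U(\Rbar):U(R_b)]$, and show that the remaining ratio exceeds $1$ because no prime divisor of $n$ splits. The differences are minor: the paper reduces to $b=ap$ and cites the function $L(n,d)$ of \cite[Theorem 6.2]{subringrelations}, whereas you compute that factor directly as $n\prod_{p\mid n,\ p\text{ inert}}(1+1/p)$ from the Chinese Remainder Theorem and local unit counts, which keeps the argument self-contained and handles arbitrary $a\mid b$ at once.
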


\begin{proof}
    Since $a$ is a proper divisor of $b$, then $b=ak$ for some $k\in\bN\backslash\{1\}$. Let $p$ be a prime divisor of $k$, and note that if $U(R_a)=U(R_b)$, then $U(R_b)\subseteq U(R_{ap})\subseteq U(R_a)=U(R_b)$. Moreover, $\abs{\Cl(R_a)}\leq\abs{\Cl(R_{ap})}\leq \abs{\Cl(R_b)}.$ Then it will suffice to show the result when $b=ap$ for a prime integer $p$.

    By \cite[Theorem 12.12]{neukirch}, we know that for any order $R$ in an algebraic number field with conductor ideal $I$:
    $$\frac{\abs{\Cl(R)}}{\abs{\Cl(\Rbar)}}=\frac{\abs{U(\quot{\Rbar}{I})}}{\abs{U(\quot{R}{I})}\cdot\abs{\quot{U(\Rbar)}{U(R)}}}.$$ Then:
    $$\frac{\abs{\Cl(R_b)}}{\abs{\Cl(R_a)}}=\frac{\abs{\Cl(R_b)}}{\abs{\Cl(\Rbar)}}\cdot \frac{\abs{\Cl(\Rbar)}}{\abs{\Cl(R_a)}}=\frac{\abs{U(\quot{\Rbar}{(b)})}}{\abs{U(\quot{R_b}{(b)})}\cdot\abs{\quot{U(\Rbar)}{U(R_b)}}}\cdot \frac{\abs{U(\quot{R_a}{(a)})}\cdot\abs{\quot{U(\Rbar)}{U(R_a)}}}{\abs{U(\quot{\Rbar}{(a)})}}.$$

    If we assume that $U(R_a)=U(R_b)$, then $\abs{\quot{U(\Rbar)}{U(R_a)}}=\abs{\quot{U(\Rbar)}{U(R_b)}}$. Thus, we can cancel this factor from the above equation and apply \cite[Theorem 6.2]{subringrelations} to obtain
    $$\frac{\abs{\Cl(R_b)}}{\abs{\Cl(R_a)}}=\frac{L(b,d)}{L(a,d)},$$
    where $L:\bN\times\bZ\to\bN$ is the function defined in \cite[Definition 6.1]{subringrelations}. From this definition of $L$, we can see that if $p\nmid a$, then $L(b,d)=L(ap,d)=L(a,d)\cdot L(p,d)$, with $L(p,d)=p$ if $p$ is ramified and $L(p,d)=p+1$ is $p$ is inert. If $p|a$, then we can write $a=p^rk$ for some $r,k\in\bN$; again by the definition of $L$, we have $L(b,d)=L(p^{r+1}k,d)=p\cdot L(a,d)$. In either case, note that $L(b,d)>L(a,d)$, and thus $\abs{\Cl(R_b)}>\abs{\Cl(R_a)}$.

    By letting $a=1$, we can see that if $U(\Rbar)=U(R_b)$ (i.e., every unit in $\Rbar$ is contained in $R_b$), then $\abs{\Cl(R_b)}\neq \abs{\Cl(\Rbar)}$, and thus $R_b$ is not locally associated.
\end{proof}

As in Lemma \ref{prime factors of irreducible}, we now characterize the irreducible elements of $R_n$ and determine their prime ideal factors in $\Rbar$. To keep the proofs of these results as simple as possible, we handle this in stages.

\begin{lemma}
    \label{irreducibles in Rn}
    Let $R_n$ be the index $n$ order in the quadratic number field $K=\bQ[\sqrt{d}]$. Also assume that the conductor ideal $I=n\Rbar$ is a primary ideal of $\Rbar$; that is, $n=p^a$ for some ramified or inert prime $p$ and $a\in \bN$. Let $P$ be the unique prime ideal of $\Rbar$ lying over $p$. Then an element $\pi\in P$ lies in $\Irr(R_n)$ if and only if there exist $1\leq j\leq a$ and $\alpha\in \Rbar$ with no nonunit divisors lying in $R_n$ such that $\pi=p^j\alpha$ and one of the following cases holds:
    \begin{enumerate}
        \item $j=1$ and $\alpha\in U(R_n)$;
        \item $\alpha\notin P$ and $\alpha\in R_{p^{a-j}}\backslash R_{p^{a-j+1}}$;
        \item $j=a$, $p$ is ramified in $K$, and $\alpha\in P\backslash P^2$.
    \end{enumerate}
\end{lemma}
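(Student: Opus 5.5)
The plan rests on a simple description of $R_n\cap P$ and of when $p^j\beta$ lies in $R_n$. Since $R_n=\bZ+p^a\Rbar$, Lemma \ref{R intersect P} gives $R_n\cap P=p\bZ+p^a\alpha\bZ=p\,R_{p^{a-1}}$. More generally, Lemma \ref{conducts m}(2) with $m=p^j$ gives $p^j\beta\in R_n$ if and only if $\beta\in R_{p^{a-j}}$, for $\beta\in\Rbar$ and $0\le j\le a$. So every $\pi\in R_n\cap P$ can be written as $\pi=p^j\alpha$ with $1\le j\le a$ and $\alpha\in R_{p^{a-j}}$. In the forward direction I fix $j$ to be the largest such exponent. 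In the converse direction I factor any $\beta\in R_n\cap P$ in the same normalized way.

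\emph{Forward direction.} Assume $\pi=p^j\alpha\in\Irr(R_n)$ with $j$ maximal.
\begin{enumerate}
\item \emph{$\alpha$ has no nonunit divisor $\gamma\in R_n$.}
 \begin{itemize}
 \item If $\gamma\in P$, then $\gamma\in pR_{p^{a-1}}$. This is incompatible with the maximality of $j$ once we know $\alpha\notin P$; the case $\alpha\in P$ is handled in the next step.
 \item If $\gamma\notin P$, then $\gamma\equiv c\pmod{p^a\Rbar}$ for some integer $c$ prime to $p$, so $\gamma$ is a unit modulo $p^{a-j}\Rbar$. Since also $\alpha\equiv$ (integer) $\pmod{p^{a-j}\Rbar}$, the quotient $\alpha/\gamma\in\Rbar$ is congruent to an integer modulo $p^{a-j}\Rbar$. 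Hence $\alpha/\gamma\in R_{p^{a-j}}$, and $\pi=\gamma\cdot p^j(\alpha/\gamma)$ is a factorization into nonunits of $R_n$, a contradiction.
 \end{itemize}
\item \emph{Sorting into the three cases.}
 \begin{itemize}
 \item If $\alpha\in P$ and $j<a$, then $\alpha\in R_{p^{a-j}}\cap P=p\,R_{p^{a-j-1}}$ (Lemma \ref{R intersect P} applied to the order $R_{p^{a-j}}$). This contradicts the maximality of $j$.
 \item If $\alpha\in P$ and $j=a$: when $p$ is inert, $\alpha\in p\Rbar$ and $\pi=p\cdot(p^a\alpha/p)$ is reducible. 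When $p$ is ramified, the same argument rules out $\alpha\in P^2=p\Rbar$, which leaves case (3).
 \item If $\alpha\notin P$ and $\alpha\in R_{p^{a-j+1}}$ with $j\ge 2$, then $\pi=p\cdot p^{j-1}\alpha$ is reducible. With $j=1$ we get $\alpha\in R_n$, and $\alpha$ must be a unit (otherwise it is a nonunit divisor of itself lying in $R_n$); this is case (1).
 \item Every remaining possibility is case (2).
 \end{itemize}
\end{enumerate}

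\emph{Converse.} Suppose $\pi=\beta\gamma$ with $\beta,\gamma$ nonunits of $R_n$.
\begin{itemize}
\item If, say, $\gamma\notin P$, then $\gamma$ is coprime to $p^j$ in $\Rbar$, so $\gamma\mid\alpha$ in $\Rbar$. This contradicts the hypothesis that $\alpha$ has no nonunit divisor in $R_n$.
\item Otherwise both factors lie in $pR_{p^{a-1}}$. Write $\beta=p^{j_1}\beta''$ and $\gamma=p^{j_2}\gamma''$ with $j_1,j_2\ge1$ maximal. By Lemma \ref{conducts m}, $\beta''\in R_{p^{a-j_1}}$ and $\gamma''\in R_{p^{a-j_2}}$. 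Compare $P$-adic valuations.
 \begin{itemize}
 \item Case (1): $v_P(\pi)=v_P(p)<v_P(p^2)\le v_P(\beta\gamma)$, which is impossible.
 \item Case (2): $\alpha\notin P$ forces $\beta'',\gamma''\notin P$ and $j_1+j_2=j$, so $\alpha=\beta''\gamma''$. Then $\alpha\in R_{p^{a-j_1}}R_{p^{a-j_2}}\subseteq R_{p^{a-\max(j_1,j_2)}}\subseteq R_{p^{a-j+1}}$, because $j_1,j_2\le j-1$. This contradicts $\alpha\notin R_{p^{a-j+1}}$.
 \item Case (3): $v_P(\pi)=2a+1$, which is odd. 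One of $\beta'',\gamma''$ must therefore carry a single factor of $P$, and the exponents split as $j_1+j_2=a$. The contradiction should come from Lemma \ref{R intersect P}(2): the equality $R_n\cap P^{2k-1}=R_n\cap P^{2k}$ says that elements of $R_n$ cannot have the odd $P$-valuations required here.
 \end{itemize}
\end{itemize}

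I expect the ramified bookkeeping to be the main obstacle. This means case (3) in both directions, together with checking that the normalized quotients $\beta'',\gamma''$ genuinely land in the orders claimed when $P$-valuations are odd. I would first try to handle it by applying Lemma \ref{R intersect P}(2) to the intermediate orders $R_{p^{a-j_i}}$ rather than to $R_n$ alone.
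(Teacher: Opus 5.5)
Your proposal is correct and is essentially the paper's proof. Both directions rest on Lemmas \ref{R intersect P} and \ref{conducts m}: your congruence argument for \emph{no nonunit divisor in $R_n$} is the paper's invertibility-modulo-$I$ argument, case (2) is closed by the same inclusion $R_{p^{a-j_1}}R_{p^{a-j_2}}\subseteq R_{p^{a-j+1}}$, and case (3) uses the same fact that elements of $R_n$ have no odd $P$-valuation $\le 2a-1$. The ramified bookkeeping you flag is already covered by tools you use. In case (2), $\alpha\notin P$ alone does not force $\beta'',\gamma''\notin P$ when $p$ ramifies; it follows instead from maximality of $j_1<a$ together with $R_{p^{a-j_1}}\cap P=pR_{p^{a-j_1-1}}$, exactly as in your forward direction. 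In case (3), $j_1+j_2=a$ with $j_1,j_2\ge 1$ already bounds the odd valuation by $2a-1$.
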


\begin{proof}
    First, we will show that any element $\pi=p^j\alpha$ which satisfies the above conditions is necessarily an irreducible element of $R_n$; then we will show the converse. In case (1), we have that $\pi=p\alpha$ for some $\alpha\in U(R_n)$ (note that such an $\alpha$ will necessarily have no nonunit divisors). Clearly $\pi\in R_n$ as a product of two elements of $R_n$. If $p$ is inert, then $\pi$ is irreducible in $\Rbar$ and therefore irreducible in $R_n$ as well. Otherwise, $(p)=P^2$ and any nontrivial factorization $\pi=\beta\gamma$ in $R_n$ would necessarily have $\beta,\gamma\in R_n\cap P\backslash (p)$. However, Lemma \ref{R intersect P} tells us that $R_n\cap P\subseteq (p)$, so $\pi$ must be irreducible in $R_n$.

    In case (2), $\pi=p^j\alpha$ for some $1\leq j\leq a$ with $\alpha\notin P$ and $\alpha\in R_{p^{a-j}}\backslash R_{p^{a-j+1}}$. By Lemma \ref{conducts m}, any such $\pi$ must lie in $R_n$. Using Lemma \ref{R intersect P}, we know that any nontrivial factorization of $\pi$ in $R_n$ must necessarily be of the form $\pi=(p^k\beta)(p^{j-k}\gamma)$ for some $0\leq k\leq j$ and $\beta,\gamma\in\Rbar$. Assume without loss of generality that $k\leq j-k$. If $k=0$, then $\alpha=\beta\gamma$ with $\beta$ a nonunit lying in $R_n$, contradicting the fact that $\alpha$ has no nonunit divisors lying in $R_n$. Then $1\leq k\leq j-k\leq j-1$. Once again using Lemma \ref{conducts m}, we know that $\beta\in R_{p^{a-k}}\subseteq R_{p^{a-j+1}}$ and $\gamma\in R_{p^{a-(j-k)}}\subseteq R_{p^{a-j+1}}$. Therefore, $\alpha=\beta\gamma\in R_{p^{a-j+1}}$, a contradiction. Then $\pi\in \Irr(R_n)$.

    In case (3), $p$ is ramified and $\pi=p^a\alpha$ with $\alpha\in P\backslash P^2$. If $\pi$ factors nontrivially in $R_n$ as $\pi=\beta\gamma$, then exactly one of $\beta\Rbar$ or $\gamma\Rbar$ must be exactly divisible by an odd power of $P$. However, Lemma \ref{R intersect P} assures us that this is impossible if this prime power is less than $2a$. Then without loss of generality, we can assume that $\beta\in P^{2a+1}$ and $\gamma\notin P$. However, this again means that $\gamma$ is a nonunit element of $R_n$ which divides $\alpha$, a contradiction. Thus, $\pi\in \Irr(R_n)$.

    We now show the converse. Assume that $\pi\in P$ is an irreducible element of $R_n$. Since $\pi\in P$, Lemma \ref{R intersect P} assures us that $\pi=p^j\alpha$ for some $j\in \bN$ and $p\nmid \alpha$. Moreover, if $j<a$, then the same lemma tells us that $\alpha\notin P$. We also note that if $j>a$, then $\pi=p^{j-a}(p^a\alpha)$ is a factorization of $\pi$ in $R_n$, a contradiction. Then either $\pi=p^j\alpha$ for some $1\leq j\leq a$ with $\alpha\notin P$; or $p$ is ramified and $\pi=p^a\alpha$ for some $\alpha\in P\backslash P^2$. In any case, note that if $\alpha$ were to have a nonunit factor $\beta\in R_n$ (i.e., if $\alpha=\beta\gamma$ for some $\gamma\in \Rbar$), then this $\beta$ must necessarily lie outside of $P$. Thus, $\beta+I\in U(\quot{R_n}{I})$, so $(\pi+I)(\beta+I)^{-1}=p^j\gamma+I\in R_n+I$. Then $p^j\gamma\in R_n$ as well, contradicting the irreducibility of $\alpha$ in $R_n$. We conclude that $\alpha$ must have no nonunit divisors lying $R_n$.

    All that remains to show is that if $\pi=p^j\alpha$ for some $1\leq j\leq a$ with $\alpha\notin P$, then either case (1) or case (2) holds. Note that since $p^j\alpha=\pi\in R_n$, then Lemma \ref{conducts m} tells us that $\alpha\in R_{p^{a-j}}$. Then suppose that $\alpha\in R_{p^{a-j+1}}$. By the same lemma, we know that $p^{j-1}\alpha\in R_n$, and thus $\alpha=p(p^{j-1}\alpha)$. If either $p^{j-1}$ or $\alpha$ is a nonunit, this contradicts the irreducibility of $\alpha$. Then either $j=1$ and $\alpha\in U(R_n)$ (note here that $n=p^a=p^{a-j+1}$) or $\alpha\notin R_{p^{a-j+1}}$.
\end{proof}

We now define a notation that will help us more easily describe the possible number of prime ideal factors of a principal ideal of $\Rbar$ generated by an irreducible element of $R_n$.

\begin{definition}
    \label{dj}
    Let $R_n$ be the index $n$ order in the quadratic number field $K=\bQ[\sqrt{d}]$. Also assume that the conductor ideal $I=n\Rbar$ is a primary ideal of $\Rbar$; that is, that $n=p^a$ for some ramified or inert prime $p$ and $a\in\bN$. Let $P$ be the unique prime ideal of $\Rbar$ lying over $p$. For each $1\leq j\leq a$, we define the constant $d_j$ as follows:
    $$d_j=\begin{cases}d_{\ker(\tau_{a-j})}(\Cl(R_n)),&\textup{if }U(R_{p^{a-j}})\neq U(R_{p^{a-j+1}});\\d_{\ker(\tau_{a-j})\backslash\ker(\tau_{a-j+1})}(\Cl(R_n)),&\ow.\end{cases}$$
    Here, $\tau_i$ refers to the surjective function $\tau_i:\Cl(R_n)\to \Cl(R_{p^i})$ defined by $\tau_i([J])=[JR_{p^i}]$ (for proof of surjectivity, see \cite[Lemma 2.13]{radicalconductor}). Note that Lemma \ref{same units not la} guarantees that when $U(R_{p^{a-j}})=U(R_{p^{a-j+1}})$, $\ker(\tau_{a-j})\backslash\ker(\tau_{a-j+1})$ is nonempty.
\end{definition}

As we will see in the proof of the following statement, $d_j$ is defined in such a way that it will give the length of the longest sequence $\{[A_1],\dots,[A_d]\}$ in $\Cl(R)$ with no zero-sum subsequence such that $\alpha \Rbar:=(A_1\cdots A_d)\Rbar$ is principal and $\alpha$ can be chosen from $R_{p^{a-j}}\backslash R_{p^{a-j+1}}$. It is especially worth noting that in cases when Conjecture \ref{d in subgroups} holds, $d_j$ will simply be equal to $d_{\ker(\tau_{a-j})}(\Cl(R_n))$ (in particular, Corollary \ref{S-relative beta minus alpha} guarantees that this holds when $\Cl(R_n)$ is cyclic).

\begin{lemma}
    \label{prime factors of irreducibles in Rn}
    Let $R_n$ be the index $n$ order in the quadratic number field $K=\bQ[\sqrt{d}]$. Also assume that the conductor ideal $I=(n)$ is a primary ideal of $\Rbar$; that is, $n=p^a$ for some ramified or inert prime $p$ and $a\in\bN$. Let $P$ be the unique prime ideal of $\Rbar$ lying over $p$. Let $\pi\in \Irr(R_n)$, and let $p^j$ be the largest power of $p$ dividing $\pi$ in $\Rbar$. Finally, let $d_j$ be as defined in Definition \ref{dj} for each $1\leq j\leq a$. Then:
    \begin{enumerate}
        \item If $j=0$, then $\pi\Rbar$ factors into at most $D(\Cl(R_n))$ prime ideals of $\Rbar$.
        \item If $j\geq 1$ and $p$ is inert, then $\pi\Rbar$ factors into at most $j+d_j$ prime ideals of $\Rbar$.
        \item If $j\geq 1$, $p$ is ramified, and $\pi\notin P^{2a+1}$, then $\pi\Rbar$ factors into at most $2j+d_j$ prime ideals of $\Rbar$.
        \item If $p$ is ramified and $\pi\in P^{2a+1}$, then $\pi\Rbar$ factors into at most $2a+D_P(\Cl(R_n))$ prime ideals of $\Rbar$.
    \end{enumerate}
    Moreover, these upper bounds are achieved for each $1\leq j\leq a$.
\end{lemma}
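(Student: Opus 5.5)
The plan is to read off the shape of $\pi$ from Lemma \ref{irreducibles in Rn} and then bound the number of prime factors of $\alpha\Rbar$ coprime to $p$ by a suitable small $S$-relative Davenport constant. This follows the proof of Lemma \ref{prime factors of irreducible}.

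Case (1), $j=0$. Here $\pi$ is relatively prime to $I$. The proof of Lemma \ref{elasticity relatively prime to I} then gives at most $D(\Cl(R_n))$ prime factors of $\pi R_n$, and Lemma \ref{prime factors in R} transfers this count to $\pi\Rbar$.

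Cases (2) and (3), $j\geq1$ and $\pi\notin P^{2a+1}$. By Lemma \ref{irreducibles in Rn}, $\pi=p^j\alpha$ with $\alpha\notin P$ and $\alpha$ having no nonunit divisor in $R_n$; case (1) of that lemma is a degenerate instance with $\alpha$ a unit. Write $\alpha\Rbar=Q_1\cdots Q_k$. Lemma \ref{factor relatively prime to I} shows that $\{[R_n\cap Q_1],\dots,[R_n\cap Q_k]\}$ has no zero-sum subsequence. The product $(R_n\cap Q_1)\cdots(R_n\cap Q_k)$ extends to $R_{p^{a-j}}$ as the principal ideal $\alpha R_{p^{a-j}}$, since $\alpha\in R_{p^{a-j}}$. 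So the sum of the sequence lies in $\ker(\tau_{a-j})$, which gives $k\le d_{\ker(\tau_{a-j})}(\Cl(R_n))$. When $U(R_{p^{a-j}})=U(R_{p^{a-j+1}})$ I want the sum to avoid $\ker(\tau_{a-j+1})$. If the sum were in that kernel, the ideal would be generated in $R_{p^{a-j+1}}$ by some $\alpha'$. Then $\alpha=u\alpha'$ with $u$ a unit of $R_{p^{a-j}}$, hence of $R_{p^{a-j+1}}$, forcing $\alpha\in R_{p^{a-j+1}}$. This contradicts case (2) of Lemma \ref{irreducibles in Rn}; the degenerate unit case has $k=0$ and is trivially fine. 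Thus $k\le d_j$, and the total count is $j+d_j$ if $p$ is inert and $2j+d_j$ if $p$ is ramified.

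Case (4), $p$ ramified and $\pi\in P^{2a+1}$. By Lemma \ref{irreducibles in Rn}, $\pi=p^a\alpha$ with $\alpha\Rbar=PQ_1\cdots Q_k$. The same zero-sum-free argument applies, now with $[Q_1\cdots Q_k]=[P]^{-1}$ in $\Cl(\Rbar)$. Corollary \ref{negative automorphism} then gives $k\le d_P(\Cl(R_n))=D_P(\Cl(R_n))-1$. So the count is $2a+1+k\le 2a+D_P(\Cl(R_n))$.

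Sharpness for $1\le j\le a$. Take a zero-sum-free sequence of classes realizing $d_j$ in the relevant set, choose prime representatives coprime to $I$ by Lemma \ref{primes in ideal classes}, and let $\alpha$ generate the extended ideal in $R_{p^{a-j}}$. In the "otherwise" case of Definition \ref{dj}, $\alpha\notin R_{p^{a-j+1}}$ automatically, because the sum is not in $\ker(\tau_{a-j+1})$.

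The main obstacle is the case $U(R_{p^{a-j}})\ne U(R_{p^{a-j+1}})$, where I must still arrange $\alpha\notin R_{p^{a-j+1}}$. I would multiply $\alpha$ by a unit $u\in U(R_{p^{a-j}})\setminus U(R_{p^{a-j+1}})$ when needed. If both $\alpha$ and $u\alpha$ lay in $R_{p^{a-j+1}}$, then, since $\alpha$ is invertible modulo the conductor there (as $\alpha\notin P$), $u$ would lie in $R_{p^{a-j+1}}$, a contradiction. Lemma \ref{irreducibles in Rn}(2) then gives irreducibility of $p^j\alpha$. For case (4), I would use $\alpha$ generating $PQ_1\cdots Q_k$ with the sequence realizing $d_P$, and invoke Lemma \ref{irreducibles in Rn}(3).
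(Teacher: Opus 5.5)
Your proposal is correct and follows the paper's proof essentially step for step. The shared steps are: the porism of Lemma~\ref{elasticity relatively prime to I} for $j=0$; Lemma~\ref{factor relatively prime to I} together with the comparison $\alpha=u\alpha'$, $u\in U(R_{p^{a-j}})=U(R_{p^{a-j+1}})$, to force the sum out of $\ker(\tau_{a-j+1})$; and, for sharpness, multiplying by a unit in $U(R_{p^{a-j}})\setminus U(R_{p^{a-j+1}})$, using that $\alpha$ is invertible modulo the conductor. The one step you leave implicit is why $j=0$ makes $\pi$ coprime to $I$ when $p$ ramifies; as in the paper, this follows from Lemma~\ref{R intersect P}, since $R_n\cap P\subseteq p\Rbar$.
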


\begin{proof}
    From Lemma \ref{R intersect P}, we know that if $p\nmid \pi\in\Irr(R_n)$ (i.e., if $j=0$), then $\pi\notin P$. Item (1) follows immediately as a porism to the proof of Lemma \ref{elasticity relatively prime to I}; this also shows that the upper bound given in (1) is achievable.
    
    Otherwise, we have that $\pi$ is an irreducible element of $R_n$ lying in $P$, so we can apply Lemma \ref{irreducibles in Rn} to write $\pi=p^j\alpha$ for some $1\leq j\leq a $ and $\alpha\in\Rbar$ with no nonunit divisors lying in $R_n$. If $j=1$ and $\alpha\in U(R_n)$, then $\pi\Rbar$ factors either into $1\leq j+d_j$ prime ideal factors if $p$ is inert or $2\leq 2j+d_j$ prime ideal factors if $p$ is ramified.

    Now assume that $\pi$ falls under case (3) of Lemma \ref{irreducibles in Rn}; that is, $j=a$, $p$ is ramified in $K$, and $\alpha\in P\backslash P^2$. Then $\pi\Rbar=P^{2a+1}Q_1\cdots Q_k$ for some prime ideals $Q_i$ of $\Rbar$ distinct from $P$. Since $\pi$ is irreducible in $R_n$, it must be the case that $\pi$ has no nonunit divisors lying in $R_n$ which are relatively prime to $I$. Otherwise, if such a divisor $\beta$ were to exist, we could factor $\pi=\beta\gamma$ with $\beta\in R_n$ a nonunit and $\gamma\in P^{2a+1}\subseteq R_n$ a nonunit, contradicting the irreducibility of $\pi$. By Lemma \ref{factor relatively prime to I}, the sequence $\{[R_n\cap Q_1],\dots,[R_n\cap Q_k]\}$ must have no zero-sum subsequence in $\Cl(R_n)$. Since $\alpha\Rbar=PQ_1\cdots Q_k$ is principal, then $\{[Q_1],\cdots,[Q_k]\}$ must be a $[P]^{-1}$-sequence in $\Cl(\Rbar)$, meaning that $k\leq d_P(\Cl(R_n))$. Thus, $\pi\Rbar$ factors into at most $2a+1+d_P(\Cl(R_n))=2a+D_P(\Cl(R_n))$ prime ideals of $\Rbar$, completing the proof of item (4).

    To see that the upper bound in (4) is achieved, we use the definition of $d_P(\Cl(R_n))$ to select a $[P]^{-1}$-sequence $\{[Q_1],\dots,[Q_d]\}$ of length $d=d_P(\Cl(R_n))$ in $\Cl(\Rbar)$ with each $Q_i$ a prime ideal of $\Rbar$ relatively prime to $I$ such that $\{[R_n\cap Q_1],\dots,[R_n\cap Q_d]\}$ has no zero-sum subsequence in $\Cl(R_n)$. Then let $\pi=p^k\alpha$, where $\alpha$ is a principal generator of the ideal $PQ_1\cdots Q_d$. Since $R_n\cap P\subseteq P^2$, $\alpha\in P\backslash P^2$, and $\{[R_n\cap Q_1],\dots,[R_n\cap Q_d]\}$ has no zero-sum subsequence, it must be the case that $\alpha$ has no nonunit divisors lying in $R_n$. By case (3) of Lemma \ref{irreducibles in Rn}, $\pi$ is an irreducible in $R_n$ such that $\pi\Rbar$ factors into exactly $2a+1+d=2a+D_P(\Cl(R_n))$ prime ideals of $\Rbar$.

    The only case remaining to consider is when $\pi$ falls under case (2) of Lemma \ref{irreducibles in Rn}; that is, when $\alpha\notin P$ and $\alpha\in R_{p^{a-j}}\backslash R_{p^{a-j+1}}$. In this case, we factor $\alpha\Rbar=Q_1\cdots Q_k$, with each $Q_i$ a prime ideal of $\Rbar$ distinct from $P$. Since $p^j\alpha=\pi\in R_n$, then Lemma \ref{conducts m} tells us that $\alpha\in R_{p^{a-j}}$. Then $\alpha R_{p^{a-j}}=(R_{p^{a-j}}\cap Q_1)\cdots(R_{p^{a-j}}\cap Q_k)$ is principal. Furthermore, since $\alpha$ has no nonunit divisors lying in $R_n$, Lemma \ref{factor relatively prime to I} tells us that $\{[R_n\cap Q_1],\dots,[R_n\cap Q_k]\}$ must have no zero-sum subsequence in $\Cl(R_n)$. Since this sequence must also lie in $\ker(\tau_{a-j})$, it follows that $k\leq d_{\ker(\tau_{a-j})}(\Cl(R_n))$. Moreover, assume that $U(R_{p^{a-j}})=U(R_{p^{a-j+1}})$. By Lemma \ref{same units not la}, this also means that $\abs{\Cl(R_{p^{a-j}})}<\abs{\Cl(R_{p^{a-j+1}})}$, so $\ker(\tau_{a-j})\backslash \ker(\tau_{a-j+1})$ is a nonempty subset of $\ker(\tau_{a-j})$. Suppose that $\{[R_{p^{a-j+1}}\cap Q_1],\dots,[R_{p^{a-j+1}}\cap Q_k]\}$ is a zero-sum sequence in $\Cl(R_{p^{a-j+1}})$. Then for some $\beta\in R_{p^{a-j+1}}$,
    \begin{align*}
        \alpha R_{p^{a-j}}&=(R_{p^{a-j}}\cap Q_1)\cdots(R_{p^{a-j}}\cap Q_k)\\
        &=[(R_{p^{a-j+1}}\cap Q_1)\cdots(R_{p^{a-j+1}}\cap Q_k)]R_{p^{a-j}}\\
        &=(\beta R_{p^{a-j+1}})R_{p^{a-j}}=\beta R_{p^{a-j}}.
    \end{align*}
    Then there exists some $u\in U(R_{p^{a-j}})=U(R_{p^{a-j+1}})$ such that $\alpha=\beta u$, so $\alpha\in R_{p^{a-j+1}}$, a contradiction. Then when $U(R_{p^{a-j}})=U(R_{p^{a-j+1}})$, we have the added condition that $\{[R_{p^{a-j+1}}\cap Q_1],\dots,[R_{p^{a-j+1}}\cap Q_k]\}$ cannot be a zero-sum sequence in $\Cl(R_{p^{a-j+1}})$. Thus, $k\leq d_{\ker(\tau_{a-j})\backslash\ker(\tau_{a-j+1})}(\Cl(R_n))$. In any case, we see that $k\leq d_j$, so $\pi\Rbar$ factors into at most $j+d_j$ prime ideals of $\Rbar$ if $p$ is inert and at most $2j+d_j$ prime ideals of $\Rbar$ if $p$ is ramified, completing the proofs of items (2) and (3).

    All that remains to show is that the upper bounds in items (2) and (3) are achievable for each $1\leq j\leq a$. Assume first that $U(R_{p^{a-j}})=U(R_{p^{a-j+1}})$; we note again that by Lemma \ref{same units not la}, $\ker(\tau_{a-j})\backslash \ker(\tau_{a-j+1})$ is a nonempty subset of $\ker(\tau_{a-j})$. Then by definition of the small $S$-relative Davenport constant, there exists a sequence $\{[A_1],\dots,[A_{d_j}]\}$ in $\Cl(R_n)$ of length $d_j=d_{\ker(\tau_{a-j})\backslash\ker(\tau_{a-j+1})}(\Cl(R_n))$ which has no zero-sum sequence in $\Cl(R_n)$ whose product lies in $\ker(\tau_{a-j})$ but not in $\ker(\tau_{a-j+1})$. Now for $1\leq i\leq d_j$, let $Q_i$ be a prime ideal of $\Rbar$ distinct from $P$ such that $R\cap Q_i\in [A_i]$. By definition of the $\tau_i$'s, the construction of the sequence $\{[A_1],\dots,[A_{d_j}]\}$, and Lemma \ref{factor relatively prime to I}, this means that there is a principal generator $\alpha\in R_{p^{a-j}}\backslash R_{p^{a-j+1}}$ of the ideal $\alpha\Rbar=Q_1\cdots Q_k$ which has no nonunit divisors lying in $R_n$. By Lemma \ref{irreducibles in Rn}, the element $\pi=p^j\alpha$ is an irreducible in $R_n$ such that $\pi\Rbar$ factors into exactly $j+d_j$ prime ideals of $\Rbar$ if $p$ is inert and $2j+d_j$ prime ideals of $\Rbar$ if $p$ is ramified.

    If $U(R_{p^{a-j}})\neq U(R_{p^{a-j+1}})$, then we proceed in a very similar manner. The only difference is that we now relax the condition that $\{[A_1],\dots,[A_{d_j}]\}$ must avoid being a $\ker(\tau_{a-j+1})$-sequence. This allows us to select such a sequence of length $d_j=d_{\ker(\tau_{a-j})}(\Cl(R_n))$. We again find a principal generator $\alpha\in R_{p^{a-j}}$ of the ideal $\alpha\Rbar=Q_1\cdots Q_{d_j}$ and note that $\alpha$ must have no nonunit divisors lying in $R_n$. If $\alpha\notin R_{p^{a-j+1}}$ already, then $\pi=p^j\alpha$ is an irreducible in $R_n$ such that $\pi\Rbar$ factors into exactly $j+d_j$ prime ideals of $\Rbar$ if $p$ is inert and $2j+d_j$ prime ideals of $\Rbar$ is $p$ is ramified. Otherwise, if $\alpha\in R_{p^{a-j+1}}$, then we select a unit $u\in U(R_{p^{a-j}})\backslash U(R_{p^{a-j+1}})$. If $\alpha u=r\in R_{p^{a-j+1}}$, then since $\alpha+(p^{a-j+1})\in U(\quot{R_{p^{a-j+1}}}{(p^{a-j+1}}))$, $$u+(p^{a-j+1})=(r+(p^{a-j+1}))(\alpha+(p^{a-j+1}))^{-1}\in \quot{R_{p^{a-j+1}}}{(p^{a-j+1})}.$$
    Then $u\in U(R_{p^{a-j+1}})$, a contradiction. Then $u\alpha\in R_{p^{a-j}}\backslash R_{p^{a-j+1}}$ and $\pi=p^j(u\alpha)\in\Irr(R_n)$ achieves the upper bound.
\end{proof}

We now take advantage of these observations to find the elasticity of any quadratic order with primary conductor ideal.

\begin{theorem}
    \label{elasticity of Rn}
    Let $R_n$ be the index $n$ order in the quadratic number field $K=\bQ[\sqrt{d}]$. Also assume that the conductor ideal $I=(n)$ is a primary ideal of $\Rbar$; that is, $n=p^a$ for some ramified or inert prime $p$ and $a\in\bN$. Let $P$ be the unique prime ideal of $\Rbar$ lying over $p$. Finally, let $d_j$ be as defined in Definition \ref{dj} for each $1\leq j\leq a$. If $p$ is ramified, then
    $$\rho(R_n)=\max\left\{\frac{D(\Cl(R_n))}{2},a+\frac{D_P(\Cl(R_n))}{2},j+\frac{d_j}{2}\middle|1\leq j\leq a\right\}.$$
    If $p$ is inert, then
    $$\rho(R_n)=\max\left\{\frac{D(\Cl(R_n))}{2},j+\frac{d_j}{2}\middle|1\leq j\leq a\right\}.$$
    In particular, $\rho(R_n)\geq a$.
\end{theorem}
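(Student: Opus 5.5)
We follow the two-part strategy of Theorem \ref{elasticity of principal order}. Let $M$ denote the stated maximum; we show first that $\rho(R_n)\geq M$ and then that $\rho_{R_n}(\alpha)\leq M$ for every nonzero nonunit $\alpha\in R_n$. The key input is Lemma \ref{prime factors of irreducibles in Rn}, which bounds the number of prime ideal factors of $\pi\Rbar$ for irreducibles $\pi$ of each type, together with the observation that $p\Rbar=P$ (inert) or $P^2$ (ramified) with $p\in R_n$.

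\textbf{Lower bound.} If $M=\frac{D(\Cl(R_n))}{2}$, Lemma \ref{elasticity relatively prime to I} supplies the element. Otherwise, pick an irreducible $\tau$ attaining the bound of Lemma \ref{prime factors of irreducibles in Rn} in the relevant case. In case (2) or (3) this is $\tau=p^j\beta$ with $\beta\Rbar=Q_1\cdots Q_{d_j}$. We then build a ``conjugate'' irreducible $\tau'=p^j\beta'$ with $\beta'\Rbar=Q_1'\cdots Q_{d_j}'$, where $[R_n\cap Q_i']=[R_n\cap Q_i]^{-1}$. Taking $\beta'=\bar\beta$ (the Galois conjugate) is natural, since $\bar\beta$ lies in the same $R_{p^{a-j}}\setminus R_{p^{a-j+1}}$ and has no nonunit divisors in $R_n$. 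Then $\tau\tau'=p^{2j}N(\beta)$. Here $N(\beta)$ is an integer prime to $p$, and its factorization gives irreducibles $\pi_i$ generating $(R_n\cap Q_i)(R_n\cap Q_i')$, each principal and irreducible as in Theorem \ref{elasticity of principal order}. This yields $2j+d_j$ irreducibles against $2$, so $\rho\geq j+\frac{d_j}{2}$. In the ramified case (4), with $\tau=p^a\beta$ and $\beta\Rbar=PQ_1\cdots Q_d$, the product $\tau\bar\tau$ equals $p^{2a}\cdot(\pm p)\cdot\prod\pi_i$, giving $2a+1+d$ irreducibles against $2$, i.e. $a+\frac{D_P}{2}$. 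The conjugation trick should also handle the unit adjustment in the proof of Lemma \ref{prime factors of irreducibles in Rn}. Finally, $\rho\geq a$ follows from the $j=a$ term, because $d_a\geq0$.

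\textbf{Upper bound.} Fix factorizations $\alpha=\pi_1\cdots\pi_m=\tau_1\cdots\tau_n$ with $n>m$. By Lemma \ref{remove other primes}, we may discard irreducibles that are prime elements of $\Rbar$ coprime to $I$. Let $k$ count the prime ideal factors of $\alpha\Rbar$ distinct from $P$, and let $e$ be the exponent of $P$.

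On the $\tau$ side, each $\tau_\ell\notin P$ contributes at least two non-$P$ factors. Each $\tau_\ell\in P$ contributes at most $2j_\ell+1$ copies of $P$ (ramified) or $j_\ell$ (inert), and Lemma \ref{irreducibles in Rn} gives $j_\ell\geq1$. The $P$-exponent per $\tau_\ell$ is therefore at least $1$ (inert) or $2$ (ramified); the ramified lower bound holds because $R_n\cap P\subseteq p\Rbar$. Writing $w=1$ (inert) or $w=2$ (ramified), we get $2\cdot\#\{\tau\notin P\}+\#\{\tau\in P\}\leq k+e/w$, and hence $n\leq k/2+e/w$ after noting that each $\tau\in P$ absorbs at least $w$ from $e$. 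Rather than this crude count, we assign weights: give each non-$P$ prime ideal weight $\frac12$ and each $P$ weight $1/w$. Every $\tau_\ell$ then carries total weight at least $1$, so $n\leq$ total weight of $\alpha\Rbar$.

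On the $\pi$ side, each $\pi_i$ carries weight at most $M$ by Lemma \ref{prime factors of irreducibles in Rn}. Type (1) gives $\frac{D}{2}$. Type (2)/(3) gives $j+\frac{d_j}{2}$, using $2j/2=j$ when ramified and $j/1$ when inert. Type (4) gives $(2a+1)/2+\frac{d_P}{2}=a+\frac{D_P}{2}$. Summing, $n\leq mM$.

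\textbf{Main obstacle.} The hardest step is the weight bound for the $\tau_\ell\in P$ in the ramified case, i.e. confirming that every irreducible in $P$ has $P$-exponent at least $2$. This follows from Lemma \ref{R intersect P}, since $R_n\cap P=R_n\cap P^2$. A second delicate point is checking that the conjugate $\tau'$ in the lower bound is genuinely irreducible. For this we apply Lemma \ref{irreducibles in Rn} to $\bar\beta$, using that conjugation preserves each $R_{p^i}$ and maps ideal classes to inverses.
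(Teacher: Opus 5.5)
Your proof is correct. The upper bound is the paper's argument. In the ramified case the paper counts all prime ideal factors of $\alpha\Rbar$: at least two per $\tau_\ell$, and at most $\max\{D(\Cl(R_n)),2a+D_P(\Cl(R_n)),2j+d_j\}$ per $\pi_i$. In the inert case it uses $2t-x\le r$. These are exactly your weightings with $w=2$ and $w=1$.

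The lower bound is where you genuinely differ. The paper pairs $\beta$ with an irreducible generator $\gamma$ of $P^{2j}Q_1'\cdots Q_{d_j}'$, where the $Q_i'$ are arbitrary primes in the inverse classes, and asserts irreducibility by ``roughly the same argument.'' It then has to absorb a unit $u\in U(\Rbar)$ in $(u\beta)\gamma=p^{2j}\pi_1\cdots\pi_{d_j}$. It does this by showing that, for a $j$ attaining the maximum, all units of $\Rbar$ lie in $R_{p^{a-j}}$.

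Taking $\tau'=\bar\tau$ removes all of this:
\begin{itemize}
\item Conjugation is a ring automorphism of $R_n=\bZ+n\Rbar$ fixing $P$, so $\bar\tau$ is irreducible outright.
\item $Q_i\bar Q_i=q_i\Rbar$ for a rational prime $q_i\neq p$; inert $q_i$ cannot occur, since $[R_n\cap Q_i]\neq 0$. Lemmas \ref{go up or go down} and \ref{prime factors in R} then give $(R_n\cap Q_i)(R_n\cap\bar Q_i)=q_iR_n$. So the classes are inverse, and the $\pi_i$ are simply the $q_i$, irreducible because $R_n\cap Q_i$ is non-principal.
\item $\tau\bar\tau=N(\tau)=\pm p^{2j}\prod q_i$ (resp.\ $\pm p^{2a+1}\prod q_i$) exactly.
\end{itemize}
Hence no unit bookkeeping arises, and you get $\rho(R_n)\ge j+\frac{d_j}{2}$ for every $j$, not only a maximizing one. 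The paper's route buys uniformity with the general construction of Theorem \ref{elasticity of principal order}, where no Galois automorphism is available. In the quadratic setting yours is cleaner.

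One small correction: drop the aside that conjugation ``should also handle the unit adjustment'' in Lemma \ref{prime factors of irreducibles in Rn}. You only cite that lemma for the existence of $\tau$. Moreover, that adjustment multiplies by a unit of $R_{p^{a-j}}$ lying outside $R_{p^{a-j+1}}$, which conjugation does not do.
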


\begin{proof}
    Let $\alpha$ be a nonzero, nonunit element of $R_n$. If $\rho_{R_n}(\alpha)=1$, then the elasticity is trivially bounded above by the given maxima. Otherwise, we can use Lemma \ref{remove other primes} to remove any prime element factors of $\alpha$ relatively prime to $P$ and produce an element with at least as large elasticity. After doing so, we factor $\alpha=\pi_1\cdots\pi_s=\tau_1\cdots\tau_t$, with each $\pi_i,\tau_j\in \Irr(R_n)$. If $p$ is ramified, then Lemma \ref{R intersect P} tells us that each $\tau_j\Rbar$ must factor into at least two prime ideal factors. Furthermore, Lemma \ref{prime factors of irreducibles in Rn} tells us that each $\pi_i\Rbar$ can have at most $M=\max\left\{D(\Cl(R)),2a+D_P(\Cl(R)),2j+d_j|1\leq j\leq a\right\}$ prime ideal factors. Then letting $r$ be the number of prime ideal factors of $\alpha\Rbar$, $s\cdot M\geq r\geq 2t$, so $\frac{t}{s}\leq \frac{M}{2}$. Since this serves as an upper bound for the elasticity of any $\alpha\in R_n$,
    $$\rho(R_n)\leq\frac{M}{2}= \max\left\{\frac{D(\Cl(R_n))}{2},a+\frac{D_P(\Cl(R_n))}{2},j+\frac{d_j}{2}\middle|1\leq j\leq a\right\}.$$

    To see that this upper bound is achievable, we note that Lemma \ref{elasticity relatively prime to I} guarantees the existence of an element $\alpha\in R_n$ with elasticity at least $\frac{D(\Cl(R_n))}{2}$. We simply need to show that elements of $R_n$ can be constructed with elasticity at least $a+\frac{D_P(\Cl(R_n))}{2}$ and at least $j+\frac{d_j}{2}$ for each $1\leq j\leq a$. By Lemma \ref{prime factors of irreducibles in Rn}, there is some $\beta\in\Irr(R_n)$ such that $\beta\Rbar=P^{2a+1}Q_1\cdots Q_d$, where $d=d_P(\Cl(R_n))$. Now for each $1\leq i\leq d$, we can select some prime ideal $Q_i'$ in $\Rbar$ such that $[R_n\cap Q_i']=[R_n\cap Q_i]^{-1}$. By Lemmas \ref{factor relatively prime to I} and \ref{irreducibles in Rn}, any generator $\gamma\in R_n$ of $\gamma\Rbar=P^{2a+1}Q_1'\cdots Q_d'$ must also be irreducible in $R_n$. Moreover, for each $1\leq i\leq d$, we can select an element $\pi_i\in\Irr(R_n)$ which generates $\pi_i\Rbar=(Q_1Q_1')$. Then for some unit $u\in U(\Rbar)$,
    $$\alpha:=(u\beta)\gamma=p^{2a+1}\pi_1\cdots\pi_d$$
    is an element of $R_n$ whose elasticity is at least $a+\frac{D_P(\Cl(R_n))}{2}$.
    
    Roughly the same argument shows that if $M=2j+d_j$ for some $1\leq j\leq a$, an element of $R_n$ can be constructed with elasticity at least $\frac{M}{2}$. This is done by letting $\beta\in \Irr(R_n)$ be a principal generator of $P^{2j}Q_1\cdots Q_{d_j}$, $\gamma\in \Irr(R_n)$ a principal generator of $P^{2j}Q_1'\cdots Q_{d_j}'$, and $\pi_i\in \Irr(R_n)$ a principal generator of $Q_iQ_i'$ for each $1\leq i\leq d_j$. The only substantial difference in this case is that if $M=2j+d_j$ for some $1\leq j\leq k$, then we must be careful about the introduction of the unit $u$. In this case, however, we note that if there is any $j<i\leq a$ for which $U(R_{p^{a-i}})\neq U(R_{p^{a-i+1}})$, then by Proposition \ref{d in subsets}, $d_j\leq d_{\ker(\tau_{a-i})}(\Cl(R_n))=d_i$. This implies that $M=2j+d_j<2i+d_i$, a contradiction, so we immediately get that $U(\Rbar)=U(R_p)=\cdots=U(R_{p^{a-j}})$; that is, every unit $u\in U(\Rbar)$ lies in $R_{p^{a-j}}$. Now when we produce the element
    $$\alpha:=(u\beta)\gamma=p^{2j}\pi_1\cdots\pi_{d_j},$$
    we note that $u\in R_{p^{a-j}}=R_n+p^{a-j}\Rbar$ and $\beta\in p^j\Rbar$. Then $u\beta\in R_n$, meaning that $\alpha$ is still an element of $R_n$ whose elasticity is at least $j+\frac{d_j}{2}$. This concludes the proof in the case that $p$ is ramified.

    Now assume that $p$ is inert. The proof that the desired elasticity is achieved by an element of $R_n$ is nearly identical to the previous case, with the notable difference being that $P$ is now a principal prime ideal generated by $p\in R_n$. All that remains to show is that this is an upper bound on the elasticity of any element of $R_n$. Let $M=\max\{D(\Cl(R_n),2j+d_j|1\leq j\leq a\}$, and consider a nonzero, nonunit element $\alpha\in R_n$ factored into irreducibles of $R_n$ in two ways: $\alpha=\pi_1\cdots\pi_s=\tau_1\cdots\tau_t$. As before, we will assume that $\rho_{R_n}(\alpha)>1$ and that none of the $\pi_i$'s or $\tau_j$'s are prime unless they generate $P$. Let $r$ be the total number of prime ideals of $\Rbar$ in the factorization of $\alpha\Rbar$; $x$ the number of factors of $P$ in this factorization; and $y$ the number of factors of other prime ideals. Immediately, $r=x+y$. Now for each $1\leq i\leq t$, note that $\tau_i\Rbar$ must have at least one prime ideal factor if $\tau_i\in P$; otherwise, $\tau_i\Rbar$ must have at least two prime ideal factors distinct from $P$. That is, $2(t-x)+x=2t-x\leq r$.

    Now considering the left-hand side of the equation, let $x_i$ be the number of factors of $P$ in the prime ideal factorization of $\pi_i\Rbar$ for each $1\leq i\leq s$, and let $y_i$ be the number of prime ideal factors distinct from $P$ in this factorization. Then $x=\sum_{i=1}^sx_i$ and $y=\sum_{i=1}^sy_i$. If $\pi_i$ does not lie in $P$, then note that $x_i=0$ and $y_i\leq D(\Cl(R_n))$. On the other hand, if $\pi_i$ is exactly divisible by $p^j$ for some $1\leq j\leq k$, then $x_i=j$ and $y_i\leq d_j$ by Lemma \ref{prime factors of irreducibles in Rn}. Then $2t-x\leq r=\sum_{i=1}^s(x_i+y_i)$, and thus
    $$\frac{t}{s}\leq \frac{\sum_{i=1}^s(x_i+y_i)+x}{2s}=\frac{1}{2}\cdot\frac{\sum_{i=1}^s(2x_i+y_i)}{s}.$$
    Since this final fraction is the average of the terms $2x_i+y_i$, which for each $1\leq i\leq s$ must be equal to either $D(\Cl(R_n))$ or $2j+d_j$ for some $1\leq j\leq k$, then this average is bounded above by $M$, the largest possible value of any individual term. Since this forms an upper bound for the elasticity of any element of $R_n$, we get
    $$\rho(R_n)\leq \frac{M}{2}=\max\left\{\frac{D(\Cl(R_n))}{2},j+\frac{r_j}{2}\middle|1\leq j\leq a\right\}.$$
    Since this upper bound is also achieved as the elasticity of an element of $R_n$, this completes the proof in the case that $p$ is inert.
\end{proof}

\begin{remark}
    One will notice that in the proof of the last statement, we observed that, by Theorem \ref{d in subsets}, for any $1\leq i\leq a$, $d_{\ker(\tau_{a-i})\backslash\ker(\tau_{a-i+1})}(\Cl(R_n))\leq d_{\ker(\tau_{a-i})}(\Cl(R_n))$. Moreover, for any $1\leq i\leq j\leq a$, $d_{\ker(\tau_{a-i})}(\Cl(R_n))\leq d_{\ker(\tau_{a-j})}(\Cl(R_n))$. Thus, if any $d_j=d_{\ker(\tau_{a-j})}(\Cl(R_n))$ (in particular, if $U(R_{p^{a-j}})\neq U(R_{p^{a-j+1}})$), then necessarily $d_j\geq d_i$ for every $i\leq j$. Then when calculating the elasticity in the previous case, one only needs to check values of $j$ with $b\leq j\leq a$, where $b$ is chosen to be minimal such that $R_{p^{a-b}}$ contains the fundamental unit of $\Rbar$. This immediately gives rise to the following corollary.
\end{remark}

\begin{corollary}
    Let $R_n$ be the index $n$ order in the quadratic number field $K=\bQ[\sqrt{d}]$. Also assume that the conductor ideal $I=(R_n:\Rbar)$ is a primary ideal of $\Rbar$; that is, that $n=p^a$ for some ramified or inert prime $p$ and $a\in\bN$. Let $P$ be the unique prime ideal of $\Rbar$ lying over $p$. Finally, assume that the fundamental unit $u\in U(\Rbar)$ is not contained in $U(R_p)$. If $p$ is ramified, then
    $$\rho(R_n)=\max\left\{\frac{D(\Cl(R_n))}{2},a+\frac{D_P(\Cl(R_n))}{2},a+\frac{d_{\Rbar}(\Cl(R_n))}{2}\right\}.$$
    If $p$ is inert, then
    $$\rho(R_n)=\max\left\{\frac{D(\Cl(R_n))}{2},a+\frac{d_{\Rbar}(\Cl(R_n))}{2}\right\}.$$
\end{corollary}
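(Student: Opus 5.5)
The plan is to specialize Theorem \ref{elasticity of Rn}, using the hypothesis that the fundamental unit $u$ of $\Rbar$ does not lie in $R_p$. The key consequence concerns the chain $R_{p^a}\subseteq R_{p^{a-1}}\subseteq\cdots\subseteq R_p\subseteq R_1=\Rbar$. Since $u\notin R_p$, we have $U(R_{p^0})=U(\Rbar)\neq U(R_p)$.

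Note that $j=a$ corresponds to $a-j=0$ and $a-j+1=1$. By Definition \ref{dj}, $d_a$ therefore falls into the first case:
$$d_a=d_{\ker(\tau_0)}(\Cl(R_n)).$$
Here $\tau_0:\Cl(R_n)\to\Cl(R_1)=\Cl(\Rbar)$ is exactly the map $\tau$. So $\ker(\tau_0)=\ker(\tau)$, and by the notational remark at the end of Section 2, $d_a=d_{\Rbar}(\Cl(R_n))$. Substituting $j=a$ shows that the term $a+\frac{d_{\Rbar}(\Cl(R_n))}{2}$ appears in the maximum of Theorem \ref{elasticity of Rn}.

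It remains to show that the terms $j+\frac{d_j}{2}$ with $1\le j<a$ never exceed the $j=a$ term. I will show $d_j\le d_a$ for every $j\le a$. In either case of Definition \ref{dj}, $d_j$ is the small $S$-relative Davenport constant of a subset of $\ker(\tau_{a-j})$. Moreover, $\ker(\tau_{a-j})\subseteq\ker(\tau_0)$: the map $\tau_0$ factors through $\tau_{a-j}$, because extending an ideal from $R_n$ to $R_{p^{a-j}}$ and then to $\Rbar$ is the same as extending it directly to $\Rbar$. Theorem \ref{d in subsets} then gives $d_j\le d_{\ker(\tau_0)}(\Cl(R_n))=d_a$. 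Since also $j<a$, we get $j+\frac{d_j}{2}<a+\frac{d_a}{2}$. This is exactly the content of the preceding remark with $b=a$.

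Dropping the dominated terms, the maximum in Theorem \ref{elasticity of Rn} reduces to the displayed formulas in both the ramified and inert cases, since the $D(\Cl(R_n))/2$ and $a+D_P(\Cl(R_n))/2$ terms are untouched. The only point needing care is the inclusion of kernels, that is, the compatibility $\tau_0=\tau'\circ\tau_{a-j}$ of the class group maps. This should be a routine check on representative ideals relatively prime to $I$, using Lemma \ref{go up or go down}.
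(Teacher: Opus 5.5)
Your proposal is correct and follows essentially the same route as the paper, which obtains this corollary from the remark preceding it: since $U(\Rbar)\neq U(R_p)$, Definition \ref{dj} gives $d_a=d_{\Rbar}(\Cl(R_n))$, and Theorem \ref{d in subsets} shows that every term $j+\frac{d_j}{2}$ with $j<a$ is strictly dominated by the $j=a$ term. The only difference is that you use the single inclusion $\ker(\tau_{a-j})\subseteq\ker(\tau_0)$, where the paper chains $d_{\ker(\tau_{a-i})\backslash\ker(\tau_{a-i+1})}\leq d_{\ker(\tau_{a-i})}\leq d_{\ker(\tau_{a-j})}$ for $i\leq j$; the content is the same.
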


Furthermore, if Conjecture \ref{d in subgroups} holds, then note that each $d_j$ is unconditionally equal to $d_{\ker(\tau_{a-j})}(\Cl(R_n))$. In particular, $d_a\geq d_j$ for each $1\leq j\leq a$. This would lead to the following corollary, which allows for this calculation to be based only on the structure of $\Cl(R_n)$ and its quotient group $\Cl(\Rbar)$ (as opposed to a sequence of quotient groups). This also has the effect of making this calculation far easier. By Corollary \ref{S-relative beta minus alpha}, we may always use this simpler version of the calculation in the event that $\Cl(R_n)$ is a cyclic group.

\begin{corollary}
    Let $R_n$ be the index $n$ order in the quadratic number field $K=\bQ[\sqrt{d}]$ such that Conjecture \ref{d in subgroups} holds for $G=\Cl(R_n)$. Also assume that $R_n$ has a primary conductor ideal; that is, that $n=p^a$ for some ramified or inert prime $p$ and $a\in\bN$. Finally, let $P$ be the unique prime ideal of $\Rbar$ lying over $p$. If $p$ is ramified, then
    $$\rho(R_n)=\max\left\{\frac{D(\Cl(R_n))}{2},a+\frac{D_{P}(\Cl(R_n))}{2},a+\frac{d_{\Rbar}(\Cl(R_n))}{2}\right\}.$$
    If $p$ is inert, then
    $$\rho(R_n)=\max\left\{\frac{D(\Cl(R_n))}{2},a+\frac{d_{\Rbar}(\Cl(R_n))}{2}\right\}.$$
\end{corollary}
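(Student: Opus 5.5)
This corollary should drop out of Theorem \ref{elasticity of Rn} once we compare the $d_j$. Under the hypothesis that Conjecture \ref{d in subgroups} holds for $G=\Cl(R_n)$, my first step is to show that
\[
d_j=d_{\ker(\tau_{a-j})}(\Cl(R_n))\quad\text{for every }1\leq j\leq a .
\]
In the case $U(R_{p^{a-j}})\neq U(R_{p^{a-j+1}})$ this is the definition. In the other case, $U(R_{p^{a-j}})=U(R_{p^{a-j+1}})$, I will apply the conjecture with $H_1=\ker(\tau_{a-j+1})$, $H_2=\ker(\tau_{a-j})$ and $g=0$, so that $\alpha=H_1$ and $\beta=H_2$. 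For this I need $H_1<H_2$ to be a proper inclusion. The inclusion holds because $\tau_{a-j}$ factors through $\tau_{a-j+1}$ via the natural surjection $\Cl(R_{p^{a-j+1}})\to\Cl(R_{p^{a-j}})$. Properness is supplied by Lemma \ref{same units not la}, exactly as noted in Definition \ref{dj}. The conjecture then gives $d_{\ker(\tau_{a-j})\backslash\ker(\tau_{a-j+1})}(\Cl(R_n))=d_{\ker(\tau_{a-j})}(\Cl(R_n))$, which is the claim.

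Next I show the terms $j+\frac{d_j}{2}$ increase in $j$. The kernels form a chain $\ker(\tau_{a-1})\subseteq\ker(\tau_{a-2})\subseteq\cdots\subseteq\ker(\tau_0)$, where $\tau_0:\Cl(R_n)\to\Cl(\Rbar)$ is the map $\tau$ of the earlier Remark. By Theorem \ref{d in subsets}, $d_j$ is therefore nondecreasing in $j$, and $j$ itself strictly increases. Hence the largest term among $\{j+\frac{d_j}{2}\}$ is the one at $j=a$, namely $a+\frac{d_{\ker(\tau_0)}(\Cl(R_n))}{2}=a+\frac{d_{\Rbar}(\Cl(R_n))}{2}$ in the paper's notation. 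Substituting this into the two formulas of Theorem \ref{elasticity of Rn} gives both displayed expressions, the ramified and the inert one.

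The only step needing care is the identification of the kernel chain, specifically that $\ker(\tau_{i+1})\subseteq\ker(\tau_i)$ with $\tau_i$ compatible with the surjections between the intermediate class groups. I would verify this directly from $\tau_i([J])=[JR_{p^i}]$ and $(JR_{p^{i+1}})R_{p^i}=JR_{p^i}$: if $JR_{p^{i+1}}$ is principal, then so is its extension to $R_{p^i}$.

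I would also remark that the cyclic case follows by replacing the conjecture with Corollary \ref{S-relative beta minus alpha}.
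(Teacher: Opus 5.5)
Your proposal is correct and follows the paper's own reasoning. The paper argues, in the Remark after Theorem \ref{elasticity of Rn} and the paragraph before this corollary, that under Conjecture \ref{d in subgroups} each $d_j$ equals $d_{\ker(\tau_{a-j})}(\Cl(R_n))$. Theorem \ref{d in subsets} applied to the kernel chain then puts the maximum of $j+\frac{d_j}{2}$ at $j=a$, and you use the same two steps. Your explicit choices ($g=0$, with $H_1=\ker(\tau_{a-j+1})\subsetneq H_2=\ker(\tau_{a-j})$ proper by Lemma \ref{same units not la}) and your check that $\ker(\tau_{i+1})\subseteq\ker(\tau_i)$ via $(JR_{p^{i+1}})R_{p^i}=JR_{p^i}$ fill in details the paper only asserts.
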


Finally, since Theorem \ref{d-value for cyclic} gives us an explicit formula for the small $S$-relative Davenport constant in the case when $\Cl(R_n)$ is cyclic, we can make this formula much more explicit in this case. To do so, it is important to keep in mind that when $p$ is ramified, $P$ either falls into the principal class of $\Cl(\Rbar)$ or the unique class of order 2.

\begin{corollary}
    \label{quadratic formula when cyclic}
    Let $R_n$ be the index $n$ order in the quadratic number field $K=\bQ[\sqrt{d}]$, and assume that $\Cl(R_n)$ is cyclic of order $h'$ with quotient group $\Cl(\Rbar)$ of order $h$. Also assume that $R_n$ has a primary conductor ideal; that is, that $n=p^a$ for some ramified or inert prime $p$ and $a\in\bN$. Finally, let $P$ be the unique prime ideal of $\Rbar$ lying over $p$. If $p$ is ramified and $P$ is a principal ideal, then
    $$\rho(R_n)=\max\left\{\frac{h'}{2},a+\frac{h'-h+1}{2}\right\}=\frac{h'}{2}+\max\left\{0,a-\frac{h-1}{2}\right\}.$$
    If $p$ is ramified and $P$ is non-principal, then
    $$\rho(R_n)=\max\left\{\frac{h'}{2},a+\frac{h'-\frac{h}{2}+1}{2}\right\}=\frac{h'}{2}+\max\left\{0,a-\frac{h-2}{4}\right\}.$$
    If $p$ is inert, then
    $$\rho(R_n)=\max\left\{\frac{h'}{2},a+\frac{h'-h}{2}\right\}=\frac{h'}{2}+\max\left\{0,a-\frac{h}{2}\right\}.$$
\end{corollary}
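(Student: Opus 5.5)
We specialize the preceding corollary (the one valid whenever Conjecture \ref{d in subgroups} holds for $G=\Cl(R_n)$). Since $\Cl(R_n)$ is cyclic, Corollary \ref{S-relative beta minus alpha} supplies the conjecture, so
$$\rho(R_n)=\max\left\{\frac{D(\Cl(R_n))}{2},a+\frac{D_{P}(\Cl(R_n))}{2},a+\frac{d_{\Rbar}(\Cl(R_n))}{2}\right\}$$
in the ramified case, and the same without the middle term in the inert case. What remains is to evaluate each constant explicitly through Theorem \ref{d-value for cyclic} and Lemma \ref{skalba-cyclic}, and then compare the terms.

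Identify $\Cl(R_n)$ with $\bZ_{h'}$. Then $H=\ker(\tau)$ has order $h'/h$ and index $h$, so $H=\gen{h}$ with $m=h$ dividing $h'$. We also have $D(\Cl(R_n))=h'$.

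For $d_{\Rbar}$, take the coset $\alpha=H$ itself with representative $g=m=h$. Theorem \ref{d-value for cyclic} gives $d_{\Rbar}(\Cl(R_n))=h'-\gcd(h,h)=h'-h$. This settles the inert case: the maximum is $\max\{h'/2,\,a+(h'-h)/2\}$, and factoring out $h'/2$ gives the displayed form.

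In the ramified case, the coset $S$ relevant to $D_P$ is $\tau^{-1}([P])$ (up to sign, by Corollary \ref{negative automorphism}), and $[P]$ has order $1$ or $2$ in $\Cl(\Rbar)\cong\bZ_h$.

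If $P$ is principal, the coset is $H$ again. Then $D_P=d_{\Rbar}+1=h'-h+1$, so the $D_P$ term dominates the $d_{\Rbar}$ term, and we obtain $\max\{h'/2,\,a+(h'-h+1)/2\}$.

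If $P$ is non-principal, then $h$ is even and $[P]$ corresponds to the coset $h/2+H$. Taking $g=h/2$ gives $\gcd(h/2,h)=h/2$, so $d_P=h'-h/2$ and $D_P=h'-h/2+1$. Again this exceeds $d_{\Rbar}=h'-h$, and the maximum is $\max\{h'/2,\,a+(h'-h/2+1)/2\}$.

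The rewritten forms follow by subtracting $h'/2$ inside the maximum. For example, $a+\frac{h'-h/2+1}{2}-\frac{h'}{2}=a-\frac{h-2}{4}$.

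The main point needing care is the identification of the coset for $P$. This means checking that $d_P(\Cl(R_n))$, as used in Lemma \ref{prime factors of irreducibles in Rn}, really is $d$ of the preimage under $\tau$ of $[P]$ or $[P]^{-1}$; the sign is irrelevant by Corollary \ref{negative automorphism}. It also means checking that the unique element of order 2 in the cyclic group $\Cl(\Rbar)$ lifts to the residue $h/2$ modulo $m=h$. Both are straightforward, since a cyclic group has a unique subgroup of each order.
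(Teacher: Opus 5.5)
Your proposal is correct and follows the route the paper intends. You specialize the preceding corollary, which applies because $\Cl(R_n)$ is cyclic (Corollary \ref{S-relative beta minus alpha}), and evaluate $d_{\Rbar}(\Cl(R_n))=h'-h$ and $D_P(\Cl(R_n))=h'-h+1$ or $h'-\frac{h}{2}+1$ via Theorem \ref{d-value for cyclic}, using that $[P]$ is either trivial or the unique element of order $2$ in $\Cl(\Rbar)$. Your check that the $D_P$ term dominates the $d_{\Rbar}$ term in the ramified case is exactly what reduces the formula to the two-term maxima.
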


\begin{example}
    \label{return to quadratic example}
    In Example \ref{orders with different elasticity}, we claimed that $R_9=\bZ[9\sqrt{2}]$ in $K=\bQ[\sqrt{2}]$ had elasticity exactly 2; we now verify this. Since $\Cl(R_9)$ is trivial (and is thus a cyclic group) and $n=9=3^2$ with 3 an inert prime, we use Corollary \ref{quadratic formula when cyclic} to note that $$\rho(R_9)=\frac{1}{2}+\max\left\{0,2-\frac{1}{2}\right\}=2.$$
\end{example}

\begin{example}
    \label{more complex quadratic example}
    Let $R_{6561}$ be the index $6561=3^8$ order in the quadratic number field $K=\bQ[\sqrt{987}]$. Using SageMath, we verify that $\Cl(\Rbar)\cong\bZ_4$ and 3 is a ramified prime with $3\Rbar=P^2$ for the non-principal prime ideal $P=(3,\sqrt{987})$. We also note that, in the notation of \cite[Theorem 6.2]{subringrelations}, $L(6561,987)=6561$, and letting $u=377+12\sqrt{987}$, the fundamental unit in $\Rbar$, $u^{2187}$ is the smallest power of $u$ lying in $R_{6561}$. Thus, using \cite[Theorem 12.12]{neukirch},
    $$\abs{\Cl(R_{6561})}=4\cdot \frac{6561}{2187}=12.$$
    Since $\Cl(\Rbar)\cong \bZ_4$ is a quotient group of $\Cl(R_{6561})$, we conclude that $\Cl(R_{6561})\cong \bZ_{12}$, a cyclic group. Therefore by Corollary \ref{quadratic formula when cyclic},
    $$\rho(R_{6561})=\frac{12}{2}+\max\left\{0,8-\frac{4-2}{4}\right\}=6+8-\frac{1}{2}=\frac{27}{2}.$$
\end{example}

As a final note, we turn our attention to a pair of conjectures. The first asks whether, given a half-factorial order $R$ in a number field $K$, any order $T$ with $R\subseteq T\subseteq \Rbar$ must also be an HFD. This conjecture seems to have first appeared (albeit indirectly) in \cite{kainrath} in 2005. In 2023, \cite{rago} suggested that a counterexample should exist, though an immediate corollary to \cite[Theorem 6]{halter-koch} shows that such an example could not appear in any quadratic number field. The question was also of central interest in \cite{overringHFD}. In that paper, it was shown that the conjecture holds whenever the conductor ideal $I$ of $R$ is a radical ideal of $\Rbar$. Using our results from Section 4, we can immediately produce a stronger version of \cite[Theorem 3.5]{overringHFD}.

\begin{theorem}
    \label{elasticity equal intermediate}
    Let $R$ be an order in a number field $K$ and $T$ an order in $K$ containing $R$ (i.e., $R\subseteq T\subseteq \Rbar$). If $I=(R:\Rbar)$ is radical and $\rho(R)=\rho(\Rbar)$, then $\rho(R)=\rho(T)=\rho(\Rbar)$.
\end{theorem}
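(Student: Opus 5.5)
Given $R\subseteq T\subseteq\Rbar$ with $I=(R:\Rbar)$ radical and $\rho(R)=\rho(\Rbar)$, the plan is to show that $T$ inherits the structural properties of $R$ that force equal elasticity.

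First, apply Corollary \ref{radical ideal converse} to $R$: since $I$ is radical and $\rho(R)=\rho(\Rbar)$, $R$ is an associated order. Next, transfer this to $T$. Let $J=(T:\Rbar)$ be the conductor of $T$. Since $R\subseteq T$, we have $I\subseteq J$, and since $\Rbar$ is Dedekind with $I$ a product of distinct primes, $J$ divides $I$ and is therefore again radical (squarefree) in $\Rbar$. Then I would argue that $T$ is associated. Being associated is characterized in \cite{subringrelations} as ideal-preserving plus locally associated (\cite[Corollary 4.13]{subringrelations}), so I would check these two properties separately.

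For locally associated, I would use the unit/class-group description: $R$ associated means $\Cl(R)\cong\Cl(\Rbar)$ via $\tau$. Since $\Cl(R)\to\Cl(T)\to\Cl(\Rbar)$ are surjections (as in \cite[Lemma 2.13]{radicalconductor}) whose composite is a bijection, $\Cl(T)\to\Cl(\Rbar)$ is a bijection too. For ideal-preserving, i.e. $T\cap P=J$-type behavior at each prime $P\mid J$, I would use that for a prime $P$ of $\Rbar$ dividing $J$ (hence dividing $I$), the associated (ideal-preserving) property of $R$ gives $R\cap P$ behaving well, and $T\cap P\supseteq R\cap P$ with $T\cap P$ prime in $T$. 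Finiteness of $\rho(R)$ via Proposition \ref{infinite elasticity} gives uniqueness of primes of $\Rbar$ over each prime of $R$; since $T$ is integral over $R$, primes of $T$ containing $J$ lie over primes of $R$, so the same uniqueness holds for $T$. Together with $J$ radical, this is exactly the argument in the proof of Corollary \ref{radical ideal converse} showing ideal-preservation.

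With $T$ associated and $J$ radical, \cite[Theorem 3.8]{radicalconductor} (the forward direction of Corollary \ref{radical ideal converse}) yields $\rho(T)=\rho(\Rbar)$, giving $\rho(R)=\rho(T)=\rho(\Rbar)$. As an alternative sandwich, if monotonicity $\rho(\Rbar)\le\rho(T)$ from \cite[Theorem 3.4]{radicalconductor} is available, only the upper bound would be needed, but the route above avoids needing $\rho(T)\le\rho(R)$, which is not generally known.

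The main obstacle will be the transfer of the \emph{associated} property from $R$ to $T$, particularly ideal-preservation. The subtle point is that the definitions in \cite{subringrelations} are phrased relative to the order's own conductor, and $J$ may be a proper divisor of $I$. I would handle this by working prime-by-prime over each $P\mid J$ and using radicality of $J$, so that $T\cap P=J$ needs checking only locally. If the definitions prove awkward, a fallback is to show $\Cl(T)\cong\Cl(\Rbar)$ and the finiteness condition suffice for $T$ directly through the same chain used in Corollary \ref{radical ideal converse}.
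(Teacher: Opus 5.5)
Your proof is correct, and its outer skeleton matches the paper's. First, $R$ is associated by Corollary \ref{radical ideal converse}. Second, $(T:\Rbar)$ divides $I$ and is therefore radical. Third, \cite[Theorem 3.8]{radicalconductor} gives $\rho(T)=\rho(\Rbar)$.

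Where you differ is the middle step, showing that $T$ is associated. The paper does this in one line straight from the definition: $\Rbar=R\cdot U(\Rbar)\subseteq T\cdot U(\Rbar)\subseteq\Rbar$. You instead split ``associated'' into locally associated plus ideal-preserving, and recombine them via \cite[Corollary 4.13]{subringrelations}. Both halves go through:
\begin{enumerate}
    \item The composite $\Cl(R)\to\Cl(T)\to\Cl(\Rbar)$ is $\tau$, which is an isomorphism. Since the first map is surjective, both maps are bijections, so $T$ is locally associated.
    \item If $P\neq Q$ were primes of $\Rbar$ with $T\cap P=T\cap Q$, intersecting with $R$ would give $R\cap P=R\cap Q$. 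This contradicts Proposition \ref{infinite elasticity}, since $\rho(R)<\infty$. The radical-conductor argument from Corollary \ref{radical ideal converse} then applies verbatim to $T$.
\end{enumerate}
Your sentence about $R\cap P$ ``behaving well'' is not needed; the contraction argument in item 2 is the actual content.

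Your route costs more external input: the surjectivity of the class-group maps, Corollary 4.13, and Halter-Koch's criterion. It also already needs $(T:\Rbar)$ to be radical just to get associatedness. The paper's one-liner, by contrast, shows that associatedness passes to every intermediate order unconditionally. On the other hand, the only facts about $R$ your argument uses are $\Cl(R)\cong\Cl(\Rbar)$ and $\rho(R)<\infty$. So it could skip Corollary \ref{radical ideal converse} for $R$ and start directly from Theorem \ref{equal elasticity implies la}, though under a radical conductor this gains no real generality.
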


\begin{proof}
    By Corollary \ref{radical ideal converse}, $R$ must be an associated order. Since $\Rbar=R\cdot U(\Rbar)\subseteq T\cdot U(\Rbar)\subseteq \Rbar$, it immediately follows that $T$ must also be an associated order. Moreover, since $I\subseteq T$, then $(T:\Rbar)$ must be an ideal of $\Rbar$ containing (dividing) $I$. Since $I$ is radical, $(T:\Rbar)$ must be radical as well. Applying Corollary \ref{radical ideal converse} again tells us that $\rho(T)=\rho(\Rbar)$.
\end{proof}

The second, far stronger statement was conjectured in \cite{dissertation}. It asks whether, given any order $R$ in a number field $K$, any order $T$ with $R\subseteq T\subseteq \Rbar$ must satisfy $\rho(R)\geq \rho(T)\geq \rho(\Rbar)$. Although this question is likely far from being settled in full generality, it follows from Theorem \ref{elasticity of Rn} and a simple set-theoretic lemma that this conjecture must hold when $R$ is a quadratic order with primary conductor. Moreover, we can find a characterization of when the elasticity of $R$ and $T$ must be equal; namely, when the elasticity of $R$ (and of $T$) is achieved by an element relatively prime to the conductor.

\begin{lemma}
    \label{preimages}
    Let $A$, $B$, and $C$ be sets with functions $f:A\to B$ and $g:B\to C$. Let $S_1\subseteq S_2\subseteq B$ and $S\subseteq C$. The following hold:
    \begin{enumerate}
        \item $(g\circ f)^{-1}(S)=f^{-1}(g^{-1}(S))$;
        \item $f^{-1}(S_2)\backslash f^{-1}(S_1)=f^{-1}(S_2\backslash S_1)$.
    \end{enumerate}
\end{lemma}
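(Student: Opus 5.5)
Both parts are elementwise set-theoretic identities, so I would prove each by double inclusion, chasing an arbitrary element through the definitions of preimage and set difference. No earlier results from the paper are needed; the hypothesis $S_1\subseteq S_2$ is not used (it only ensures that $S_2\backslash S_1$ is the ``relative complement'' intended in the later application).

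For (1), I would fix $a\in A$ and unfold the definitions in a chain of equivalences:
$$a\in(g\circ f)^{-1}(S)\iff g(f(a))\in S\iff f(a)\in g^{-1}(S)\iff a\in f^{-1}(g^{-1}(S)).$$
This gives both inclusions at once.

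For (2), I would again fix $a\in A$ and argue
$$a\in f^{-1}(S_2)\backslash f^{-1}(S_1)\iff \bigl(f(a)\in S_2 \text{ and } f(a)\notin S_1\bigr)\iff f(a)\in S_2\backslash S_1\iff a\in f^{-1}(S_2\backslash S_1).$$
The key point is that preimages commute with complements: $a\notin f^{-1}(S_1)$ holds exactly when $f(a)\notin S_1$.

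There is no genuine obstacle here. The only step that needs any care is this last equivalence, which I would state explicitly so that the identity is seen to hold for arbitrary maps, without any injectivity or surjectivity. This matters because in the intended application (to the maps $\tau_i$ between class groups, composing $\Cl(R)\to\Cl(T)\to\Cl(\Rbar)$) the maps are only surjective homomorphisms.
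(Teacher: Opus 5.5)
Your proposal is correct and is essentially the paper's own proof: both parts follow from the same elementwise chains of equivalences, unfolding the definitions of preimage and set difference. Your remark that the hypothesis $S_1\subseteq S_2$ is never actually used is also accurate.
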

    
\begin{proof}
    Note that by definition of the preimage under a function, $x\in (g\circ f)^{-1}(S)$ if and only if $(g\circ f)(x)=g(f(x))\in S$. This holds if and only if $f(x)\in g^{-1}(S)$, or equivalently, $x\in f^{-1}(g^{-1}(S))$. Thus, $(g\circ f)^{-1}(S)=f^{-1}(g^{-1}(S))$.

    For the second item, note that $x\in f^{-1}(S_2)\backslash f^{-1}(S_1)$ if and only if $f(x)\in S_2$ but $f(x)\notin S_1$. The fact that this is equivalent to $x\in f^{-1}(S_2\backslash S_1)$ is obvious. Then $f^{-1}(S_2)\backslash f^{-1}(S_1)=f^{-1}(S_2\backslash S_1).$
\end{proof}

\begin{theorem}\label{final}
    Let $R_{m}$ and $R_n$ be the index $m$ and $n$ orders, respectively, in the quadratic number field $K=\bQ[\sqrt{d}]$. Also assume that $R_m\subsetneq R_n$ (so $n|m$) and that $R_m$ (and therefore $R_n$) has primary conductor ideal in $\Rbar$; that is, $n=p^a$ and $m=p^b$ for some inert or ramified prime $p\in\bZ$ and $a<b$. Then $\rho(R_m)\geq \rho(R_n)$, with equality if and only if $\rho(R_m)=\rho(R_n)=\frac{D(\Cl(R_n))}{2}$ (that is, the maximum elasticity is achieved by an element relatively prime to $p$).
\end{theorem}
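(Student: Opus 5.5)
The plan is to compare, term by term, the two maxima that Theorem \ref{elasticity of Rn} gives for $\rho(R_m)$ (exponent $b$) and $\rho(R_n)$ (exponent $a$). Every term in the formula for $\rho(R_n)$ should be dominated by some term in the formula for $\rho(R_m)$. Moreover, every term other than $\frac{D(\Cl(R_n))}{2}$ should be dominated \emph{strictly}, because the exponent grows from $a$ to $b>a$.

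\textbf{Setting up the comparison.} Let $\sigma:\Cl(R_m)\to\Cl(R_n)$ be $\sigma([J])=[JR_n]$. It is surjective by the same argument cited in Definition \ref{dj} (\cite[Lemma 2.13]{radicalconductor}), so $\Cl(R_n)\cong\quot{\Cl(R_m)}{\ker\sigma}$. Write $\tau_i^{(m)}:\Cl(R_m)\to\Cl(R_{p^i})$ and $\tau_i^{(n)}:\Cl(R_n)\to\Cl(R_{p^i})$ for the maps of Definition \ref{dj}. For $i\le a$ we have $\tau_i^{(m)}=\tau_i^{(n)}\circ\sigma$, since extending an ideal to $R_n$ and then to $R_{p^i}$ agrees with extending it directly.

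By Lemma \ref{preimages}(1), $\ker\tau_i^{(m)}=\sigma^{-1}(\ker\tau_i^{(n)})$. By Lemma \ref{preimages}(2),
\[
\ker\tau_i^{(m)}\backslash\ker\tau_{i+1}^{(m)}=\sigma^{-1}\bigl(\ker\tau_i^{(n)}\backslash\ker\tau_{i+1}^{(n)}\bigr).
\]
In the same way, the set of classes of $\Cl(R_m)$ lying over $[P]\in\Cl(\Rbar)$ is the $\sigma$-preimage of the corresponding set in $\Cl(R_n)$. The same holds for the full group, whose preimage is all of $\Cl(R_m)$. Applying Theorem \ref{d in quotients} with $H=\ker\sigma$ then gives three inequalities:
\[
D(\Cl(R_m))\ge D(\Cl(R_n)),\qquad D_P(\Cl(R_m))\ge D_P(\Cl(R_n)),
\]
and the same inequality for $d$ of each kernel and each kernel difference.

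\textbf{Matching indices.} For $1\le j\le a$, put $j'=j+(b-a)$, so that $1\le j'\le b$ and $b-j'=a-j$. The case split in Definition \ref{dj} depends only on whether $U(R_{p^{a-j}})=U(R_{p^{a-j+1}})$, so the same branch is used for $d_j^{(n)}$ and $d_{j'}^{(m)}$. Hence $d_{j'}^{(m)}\ge d_j^{(n)}$ by the previous paragraph. This yields
\[
j'+\tfrac{d_{j'}^{(m)}}{2}\ \ge\ j+(b-a)+\tfrac{d_j^{(n)}}{2}\ >\ j+\tfrac{d_j^{(n)}}{2}.
\]
In the ramified case we also get
\[
b+\tfrac{D_P(\Cl(R_m))}{2}\ >\ a+\tfrac{D_P(\Cl(R_n))}{2}.
\]
Together with $D(\Cl(R_m))\ge D(\Cl(R_n))$, this proves $\rho(R_m)\ge\rho(R_n)$.

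\textbf{Equality.} Suppose $\rho(R_m)=\rho(R_n)$. If the maximum for $\rho(R_n)$ were attained by a term of the form $j+\frac{d_j}{2}$ or $a+\frac{D_P}{2}$, the strict inequalities above would force $\rho(R_m)>\rho(R_n)$. So $\rho(R_n)=\frac{D(\Cl(R_n))}{2}$, and therefore $\rho(R_m)=\rho(R_n)=\frac{D(\Cl(R_n))}{2}$. The converse direction of the ``if and only if'' is immediate.

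\textbf{Main obstacle.} The step needing the most care is the identification $\tau_i^{(m)}=\tau_i^{(n)}\circ\sigma$, together with checking that the set used for $D_P$ in $\Cl(R_m)$ is exactly the $\sigma$-preimage of the one in $\Cl(R_n)$. Both reduce to choosing ideal representatives relatively prime to $p$ and using the extension and contraction correspondence of Lemma \ref{go up or go down} for each of the two orders.
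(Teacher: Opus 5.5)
Your proposal is correct and follows essentially the same route as the paper. Both use the surjection $\Cl(R_m)\to\Cl(R_n)$, then combine the preimage identities of Lemma~\ref{preimages} with Theorem~\ref{d in quotients} to compare $D$, $D_P$ and each $d_j$ against the index-shifted $d'_{j+(b-a)}$, and finally use the strict inequalities coming from $b>a$ to force $\rho(R_n)=\frac{D(\Cl(R_n))}{2}$ in the equality case. Your explicit check that $d_j$ and $d'_{j+(b-a)}$ fall into the same branch of Definition~\ref{dj} (because $b-j'=a-j$) is a detail the paper relies on but leaves unstated.
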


\begin{proof}
    When finding the elasticity of the quadratic order $R_n$ with primary conductor ideal, there are three quantities we need to consider:
    $$\frac{D(\Cl(R_n))}{2};\quad a+\frac{D_P(\Cl(R_n))}{2};\quad j+\frac{d_j}{2},\:1\leq j\leq a.$$
    If we show that the corresponding quantities for $R_m$ can only grow, the result will follow.

    First, we define surjective homomorphisms $\tau_i:\Cl(R_n)\to \Cl(R_{p^i})$ and $\sigma_j:\Cl(R_m)\to \Cl(R_{p^j})$ for $0\leq i\leq a$ and $0\leq j\leq b$ such that $\tau_i([A])=[AR_{p^i}]$ and $\sigma_j([B])=[BR_{p^j}]$. In particular, note that $\tau:=\tau_0$ maps from $\Cl(R_n)$ to $\Cl(\Rbar)$; $\sigma:=\sigma_0$ maps from $\Cl(R_m)$ to $\Cl(\Rbar)$; $\sigma_a$ maps from $\Cl(R_m)$ to $\Cl(R_n)$; and $\sigma_i=\tau_i\circ\sigma_a$ for all $0\leq i\leq a$. Since $\sigma_a:\Cl(R_m)\to \Cl(R_n)$ is a surjective homomorphism, it follows that $D(\Cl(R_m))\geq D(\Cl(R_n))$.

    For the second quantity, recall by the definition of our terminology that $D_P(\Cl(R_n))=D_A(\Cl(R_n))$, where $A=\tau^{-1}(\{[P]\})$. Similarly, $D_P(\Cl(R_m))=D_B(\Cl(R_m))$, where $B=\sigma^{-1}(\{[P]\})$. By part (1) of Lemma \ref{preimages}, this tells us that $B=\sigma_a^{-1}(A)$. Then treating $\sigma_a$ as the projection of $\Cl(R_m)$ onto $\Cl(R_n)$ and using Theorem \ref{d in quotients},
    $$D_P(\Cl(R_n))=D_A(\Cl(R_n))\leq D_B(\Cl(R_m))=D_P(\Cl(R_m)).$$
    Thus, $a+D_P(\Cl(R_n))<b+D_P(\Cl(R_m))$.

    Now let $1\leq j\leq a$, and recall the definition of $d_j$ for $R_n$:
    $$d_j=\begin{cases}d_{\ker(\tau_{a-j})}(\Cl(R_n)),&\textup{if }U(R_{p^{a-j}})\neq U(R_{p^{a-j+1}});\\d_{\ker(\tau_{a-j})\backslash\ker(\tau_{a-j+1})}(\Cl(R_n)),&\ow.\end{cases}$$
    For ease of notation, we will similarly define $d_j'$ to be the corresponding values for $R_m$ for $1\leq j\leq b$ (that is, replace $a$ and $\tau$ in definition for $d_j$ with $b$ and $\sigma$, respectively). Since $\ker(\tau_{a-j})=\tau_{a-j}^{-1}(\{[R_{p^{a-j}}]\})$ and $\ker(\sigma_{a-j})=\sigma_{a-j}^{-1}(\{[R_{p^{a-j}}]\})$, then item (1) from Lemma \ref{preimages} tells us that 
    $$\ker(\sigma_{a-j})=\sigma_{a-j}^{-1}(\{[R_{p^{a-j}}]\})=\sigma_a^{-1}(\tau_{a-j}^{-1}(\{[R_{p^{a-j}}]\})=\sigma_a^{-1}(\ker(\tau_{a-j})).$$
    Again treating $\sigma_a$ as the projection of $\Cl(R_m)$ onto $\Cl(R_n)$ and using Theorem \ref{d in quotients},
    $$d_{\ker(\tau_{a-j})}(\Cl(R_n))\leq d_{\ker(\sigma_{a-j})}(\Cl(R_m)).$$
    By the same argument and using item (2) from Lemma \ref{preimages}, we know that $$\ker(\sigma_{a-j})\backslash\ker(\sigma_{a-j+1})=\sigma_a^{-1}(\ker(\tau_{a-j}))\backslash\sigma_a^{-1}(\ker(\tau_{a-j+1}))=\sigma_a^{-1}(\ker(\tau_{a-j})\backslash\ker(\tau_{a-j+1})).$$
    Once again applying this Theorem \ref{d in quotients},
    $$d_{\ker(\tau_{a-j})\backslash\ker(\tau_{a-j+1})}(\Cl(R_n))\leq d_{\ker(\sigma_{a-j})\backslash\ker(\sigma_{a-j+1})}(\Cl(R_m)).$$
    Then for $1\leq j\leq a$, we have $d_j\leq d_{(b-a)+j}'$. Thus, for the third quantity which determines the elasticities of $R_n$ and $R_m$, we have
    $$j+\frac{d_j}{2}<(b-a)+j+\frac{d_{(b-a)+j}'}{2},\: 1\leq j\leq a.$$
    By Theorem \ref{elasticity of Rn}, it follows that $\rho(R_n)\leq \rho(R_m)$.

    Now suppose that $\rho(R_m)=\rho(R_n)$. If $\rho(R_n)\neq \frac{D(\Cl(R_n))}{2}$, then Theorem \ref{elasticity of Rn} tells us that $\rho(R_n)$ must instead be equal to either $a+\frac{D_P(\Cl(R_n))}{2}$ or $j+\frac{d_j}{2}$ for some $1\leq j\leq a$. However, our earlier arguments showed us that $b+D_P(\Cl(R_m))>a+D_P(\Cl(R_n))$ and $(b-a)+j+\frac{d_{(b-a)+j}'}{2}\geq j+\frac{d_j}{2}$. In particular, this means that $\rho(R_m)>\rho(R_n)$, contradicting our assumption. Then $\rho(R_m)=\rho(R_n)=\frac{D(\Cl(R_n))}{2}$. The converse is immediate, completing the proof.
\end{proof}

\bibliographystyle{plain}
\bibliography{bibliography}

\end{document}